\DeclareFontFamily{U}{mathx}{}
\DeclareFontShape{U}{mathx}{m}{n}{<-> mathx10}{}
\DeclareSymbolFont{mathx}{U}{mathx}{m}{n}
\DeclareMathAccent{\widehat}{0}{mathx}{"70}
\DeclareMathAccent{\widecheck}{0}{mathx}{"71}
\newtheorem{theorem}{Theorem}[section]
\newtheorem{proposition}[theorem]{Proposition}
\newtheorem{lemma}[theorem]{Lemma}
\theoremstyle{definition}
\newtheorem{notation}[theorem]{Notation}
\newtheorem{remark}[theorem]{Remark}
\newtheorem{step}{Step}
\newcommand*{\TheoremAnchorPrefix}{theorem}
\def\theHstep{\TheoremAnchorPrefix.\the\value{step}}
\DeclareMathOperator{\Gal}{Gal}
\DeclareMathOperator{\Span}{span}
\DeclareMathOperator{\Tr}{Tr}
\DeclareMathOperator{\pr}{pr}
\newcommand{\F}{{\mathbb F}}
\newcommand{\N}{{\mathbb N}}
\newcommand{\R}{{\mathbb R}}
\newcommand{\Q}{{\mathbb Q}}
\newcommand{\Z}{{\mathbb Z}}
\newcommand{\Qz}{{\Q(\zeta)}}
\newcommand{\Zz}{{\Z[\zeta]}}
\newcommand{\Ku}{K^\times}
\newcommand{\Kuk}{(K^\times)^k}
\newcommand{\Lu}{L^\times}
\newcommand{\Fp}{\F_p}
\newcommand{\Fpu}{\Fp^\times}
\newcommand{\weil}{W^{K,s}}
\newcommand{\cW}{{\mathcal W}}
\newcommand{\ws}{\cW_{K,s}}
\newcommand{\gr}{L[\Ku]}
\newcommand{\mchars}{\widehat{\Ku}}
\newcommand{\Fpgen}{\gamma}
\newcommand{\ferm}[1]{\Vert{#1}\Vert_s}
\newcommand{\ferms}[1]{\Vert{#1}\Vert_s^s}
\newcommand{\conj}[1]{\overline{{#1}}}
\newcommand{\sums}[1]{\sum_{\substack{#1}}}
\newcommand{\ggen}[1]{\left\langle{#1}\right\rangle}
\renewcommand{\epsilon}{\varepsilon}
\renewcommand{\phi}{\varphi}
\title{Rationality of Four-Valued Families of Weil Sums of Binomials}
\author{Daniel J.\ Katz}
\author{Allison E.\ Wong}
\thanks{Daniel J.\ Katz is with the Department of Mathematics, California State University, Northridge, USA.  Allison E.\ Wong was with the Department of Mathematics, California State University, Northridge and is with the Department of Mathematics, University of California, Davis.  This paper is based upon work of both authors supported in part by the National Science Foundation under Grant CCF-1815487, and upon work of Daniel J.\ Katz supported in part by the National Science Foundation under Grant CCF-2206454.}
\date{06 April 2024}
\begin{document}

\begin{abstract}
We investigate the rationality of Weil sums of binomials of the form $W^{K,s}_u=\sum_{x \in K} \psi(x^s - u x)$, where $K$ is a finite field whose canonical additive character is $\psi$, and where $u$ is an element of $K^{\times}$ and $s$ is a positive integer relatively prime to $|K^\times|$, so that $x \mapsto x^s$ is a permutation of $K$.
The Weil spectrum for $K$ and $s$, which is the family of values $W^{K,s}_u$ as $u$ runs through $K^\times$, is of interest in arithmetic geometry and in several information-theoretic applications.
The Weil spectrum always contains at least three distinct values if $s$ is nondegenerate (i.e., if $s$ is not a power of $p$  modulo $|K^\times|$, where $p$ is the characteristic of $K$).
It is already known that if the Weil spectrum contains precisely three distinct values, then they must all be rational integers.
We show that if the Weil spectrum contains precisely four distinct values, then they must all be rational integers, with the sole exception of the case where $|K|=5$ and $s \equiv 3 \pmod{4}$.
\end{abstract}

\maketitle

\section{Introduction} \label{INTRODUCTION}

In this paper, we assume that \(K\) is a finite field of characteristic \(p\) and order \(q = p^n\).
Let \(\zeta = \exp(2 \pi i / p)\). 
The canonical additive character of \(K\) is \(\psi \colon K \to \Qz\) given by \(\psi(x) = \zeta^{\Tr(x)}\), where \(\Tr \colon K \to \Fp\) with \(\Tr(x) = x + x^p + \cdots + x^{q/p}\).
We use \(s\) to denote an {\it invertible exponent over $K$}, that is, a positive integer with \(\gcd(s,q-1) = 1\).
This ensures that \(s\) has a multiplicative inverse, \(1/s\), modulo \(q-1\) and  makes \(x \mapsto x^s\) a permutation of the field \(K\) with inverse map \(x \mapsto x^{1/s}\).
For each \(u \in K\), we define
\begin{equation}\label{Weil}
\weil_u = \sum_{x \in K} \psi(x^s - u x) = \sum_{x \in K} \zeta^{\Tr(x^s - u x)},
\end{equation}
which is a Weil sum of a binomial (if \(u \neq 0\)) or a Weil sum of a monomial (if \(u=0\)).
When the field \(K\) and the exponent \(s\) are clear from context, we omit the superscript and write \(W_u\).
Note that Weil sum values lie in \(\Zz\), the ring of algebraic integers in \(\Qz\).
In fact, it is known that they lie in \(\Zz \cap \R\); see \cite[Theorem 2.1(c)]{Katz-2012}, or see \cite[Theorem 2.3]{Trachtenberg} for an earlier equivalent statement in terms of crosscorrelation of linear recursive sequences.
\begin{theorem}[Trachtenberg, 1970]\label{Real} 
If \(K\) is a finite field and \(s\) is an invertible exponent over $K$, then \(\weil_u \in \R\) for every \(u \in K\).
\end{theorem}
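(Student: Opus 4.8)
The plan is to show that each $\weil_u$ is fixed by complex conjugation. Since $\weil_u \in \Zz \subset \Qz$, and complex conjugation is the automorphism of $\Qz$ sending $\zeta$ to $\zeta^{-1} = \conj{\zeta}$, it suffices to prove $\conj{\weil_u} = \weil_u$: the subfield of $\Qz$ fixed by complex conjugation is its maximal real subfield, so such an equality forces $\weil_u \in \R$. Thus the entire problem reduces to producing an identity $\conj{\weil_u} = \weil_u$.

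First I would compute the complex conjugate termwise. Because $\Tr$ is additive and $\zeta$ is a $p$-th root of unity, for every $y \in K$ we have
\[
\conj{\psi(y)} = \conj{\zeta^{\Tr(y)}} = \zeta^{-\Tr(y)} = \zeta^{\Tr(-y)} = \psi(-y),
\]
where the middle equality makes sense because $\zeta^m$ depends only on $m \bmod p$. Applying this to each summand of \eqref{Weil} gives
\[
\conj{\weil_u} = \sum_{x \in K} \conj{\psi(x^s - u x)} = \sum_{x \in K} \psi(u x - x^s).
\]

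The remaining task is to exhibit a change of variable matching this sum back to $\weil_u$. The characteristic-two case is immediate: there $\zeta = -1$, so every value of $\psi$ is $\pm 1$ and $\weil_u$ is a rational integer, hence real. For $p$ odd I would use that $q - 1$ is even, so the invertibility condition $\gcd(s, q-1) = 1$ forces $s$ to be odd, and therefore $(-x)^s = -x^s$ in $K$. Substituting $x \mapsto -x$, which is a bijection of $K$, into the displayed sum then yields
\[
\psi\bigl(u(-x) - (-x)^s\bigr) = \psi(-ux + x^s) = \psi(x^s - ux),
\]
so $\conj{\weil_u} = \sum_{x \in K} \psi(x^s - u x) = \weil_u$, as desired. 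This argument also covers the monomial case $u = 0$ without modification.

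The computation is elementary throughout; the one point demanding care is verifying that the substitution $x \mapsto -x$ really restores the original summand, and this hinges on the parity observation that $s$ must be odd whenever $p$ is odd. That fact is a consequence of the invertibility hypothesis $\gcd(s,q-1)=1$ rather than an extra assumption, so I expect it to be the only genuinely substantive step in the proof.
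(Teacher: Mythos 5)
Your proof is correct, and it is essentially the standard argument: the paper itself gives no in-text proof, citing instead \cite[Theorem 2.1(c)]{Katz-2012} and \cite[Theorem 2.3]{Trachtenberg}, whose proofs rest on exactly your identity $\conj{\weil_u}=\sum_{x\in K}\psi(ux-x^s)$ combined with the substitution $x\mapsto -x$ and the fact that $\gcd(s,q-1)=1$ forces $(-1)^s=-1$ (a fact this paper also invokes, e.g., in the proofs of Lemmas \ref{Nihonium} and \ref{Roentgenium}). Your parity observation that $s$ is odd when $p$ is odd is the right justification, and your separate treatment of characteristic $2$ is sound, though unnecessary as a special case since $(-x)^s=-x^s$ holds trivially there.
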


A {\it multiset} of elements from a set $X$ is a function $\mu$ from $X$ into the nonnegative integers, where for $x \in X$ the value $\mu(x)$ is the {\it frequency} (number of instances) of $x$ in the multiset.
Thus, $\mu$ represents a normal set if and only if it maps $X$ into $\{0,1\}$ (in which case $\mu$ is identified with the subset $\mu^{-1}(\{1\})$ of $X$).
The {\it Weil spectrum for the field \(K\) and the exponent \(s\)} is the multiset of values \(\weil_u\) as \(u\) runs through \(\Ku\).
That is, the Weil spectrum is a multiset of elements from $\Zz$, where a given value $A \in \Zz$ has a frequency, written \(N^{K,s}_A\) (or \(N_A\) when \(K\) and \(s\) are clear from context), with
\begin{equation}\label{Nora}
N^{K,s}_A = |\{u \in \Ku : \weil_u = A\}|.
\end{equation}
We define the {\it value set for the field \(K\) and the exponent \(s\)}, written $\ws$, to be the set of distinct values in the Weil spectrum, that is,
\begin{equation}\label{William}
\ws = \{\weil_u: u \in \Ku\}.
\end{equation}  
Note that we do not record \(\weil_0\) in \(\ws\), but this value is always \(0\) because \(x \mapsto x^s\) is a permutation of \(K\) and \(\sum_{x \in K} \psi(x) = 0\).

The evaluation and estimation of Weil sums has been studied extensively \cite{Kloosterman, Davenport-Heilbronn, Akulinichev, Karatsuba, Carlitz-1978, Carlitz-1979, Coulter, Cochrane-Pinner-2003, Cochrane-Pinner-2011, Shparlinski-Voloch}, including special cases such as Kloosterman sums, which are of the form $W^{K,|K|-2}_u-1$.
Weil sums are used to count points in algebraic sets over finite fields; see, for example, Sections 7.7 and  7.11 of \cite{Katz-2019} and \cref{ALGEBRAS} of this paper.
In the Kloosterman case, the Weil spectra for fields of characteristic $2$ and $3$ were studied in \cite{Lachaud-Wolfmann} and \cite{Katz-Livne}, and Sections 7.2--7.4 of \cite{Katz-2019} describe applications of Weil spectra in information theory, which we summarize here.
The Walsh spectrum of the permutation $x\mapsto x^s$ of $K$ is obtained from the Weil spectrum by also including the value $\weil_0=0$.
The Walsh spectrum measures the nonlinearity of the permutation, which indicates its resistance to linear cryptanalysis.
The crosscorrelation spectrum of two maximum length linear recursive sequences is obtained by subtracting $1$ from each value in the Weil spectrum.
This crosscorrelation spectrum determines the performance of communications networks and remote sensing systems employing these sequences for modulation.
Weil spectra also determine the weight distribution of certain error correcting codes, thus indicating the performance of the codes.

For a finite field $K$, we say that two exponents $s$ and $s'$ are {\it equivalent} to mean that \(s' \equiv p^k s^{\ell} \pmod{q-1}\) for some \(k \in \Z\) and \(\ell \in \{-1,1\}\); this defines an equivalence relation, and equivalent exponents produce the same Weil spectrum by \cite[Theorems 2.4, 2.5]{Trachtenberg} (in the language of crosscorrelation), or see \cite[Lemmas 7.5.2, 7.5.6]{Katz-2019}\footnote{Lemma 7.5.6 of \cite{Katz-2019} has a typographical error: \(a^{1/d}\) should be fixed to read \(a^{-1/d}\) there.}.
We say that \(s\) is \textit{degenerate over \(K\)} to mean that it is equivalent to \(1\), that is, $s$ is a power of $p$ modulo $q-1$.
If \(K\) has four or fewer elements, then all exponents are degenerate over \(K\); larger finite fields always have at least one nondegenerate exponent (see \cite[Lemma 7.5.4]{Katz-2019}).
If \(s\) is degenerate, then \(\ws = \{0,q\}\) if \(q > 2\) and \(\ws = \{q\}\) if \(q=2\); see \cite[Corollary 7.5.5]{Katz-2019}.

We say the Weil spectrum for \(K\) and \(s\) is \textit{\(v\)-valued} (resp., \textit{at least $v$-valued}, \textit{at most $v$-valued}) to mean that \(|\ws| = v\) (resp., \(|\ws|\geq v\), \(|\ws| \leq v\)).
Thus, Weil spectra of degenerate exponents are at most \(2\)-valued, and Helleseth showed that Weil spectra of nondegenerate exponents are always at least \(3\)-valued in \cite[Theorem 4.1]{Helleseth}.
\begin{theorem}[Helleseth, 1976]\label{Tor}
Let $K$ be a finite field and $s$ be an invertible exponent over $K$.
Then the Weil spectrum for $K$ and $s$ is at least $3$-valued if and only if \(s\) is nondegenerate over $K$.
\end{theorem}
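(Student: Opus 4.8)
The statement is a biconditional whose substantive half is the forward one. The reverse implication (degenerate $\Rightarrow$ at most $2$-valued) is immediate from the facts quoted above: if $s$ is degenerate then $\ws = \{0,q\}$ (or $\{q\}$ when $q=2$), which has at most two elements. So the plan is to prove the contrapositive of the forward direction: if the Weil spectrum for $K$ and $s$ is at most $2$-valued, then $s$ is degenerate. Since all exponents are degenerate when $q \le 4$, I may assume $q \ge 5$ throughout.

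The engine of the argument is a single algebraic fact that I would isolate first: for $u \in \Ku$, one has $\weil_u = q$ if and only if $s$ is degenerate. Indeed $\weil_u = q$ forces every summand $\psi(x^s - ux)$ to equal $1$, i.e.\ $\Tr(x^s) = \Tr(ux)$ for all $x \in K$, so the function $x \mapsto \Tr(x^s)$ is $\F_p$-linear. Writing $\Tr(x^s) = \sum_{j=0}^{n-1} x^{sp^j}$ and $\Tr(ux) = \sum_{j=0}^{n-1} u^{p^j} x^{p^j}$ as reduced polynomials modulo $x^q - x$ and comparing supports, the cyclotomic coset of $s$ modulo $q-1$ must coincide with that of $1$, which is exactly the statement that $s$ is a power of $p$ modulo $q-1$. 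Conversely degeneracy gives $\weil_1 = q$. Hence it suffices to show that an at most $2$-valued spectrum must contain the value $q$.

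To produce the value $q$ I would combine power moments with two arithmetic constraints. A direct orthogonality computation gives the two ``free'' moments $\sum_{u \in \Ku} \weil_u = q$ and $\sum_{u \in \Ku}\weil_u^2 = q^2$ (using $\weil_0 = 0$); a one-valued spectrum would force $q=2$ here, so under $q \ge 5$ there are exactly two values $A,B$. Because $\Gal(\Qz/\Q)$ permutes the family $\{\weil_u\}_{u \in \Ku}$ --- concretely $\sigma_t(\weil_u) = \weil_{t^{1-1/s}u}$ for $t \in \Fpu$ --- the value set $\{A,B\}$ is Galois-stable, so $\sigma_1 := A+B$ and $\sigma_2 := AB$ lie in $\Z$. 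Next, reducing modulo the prime $(1-\zeta)$ above $p$ shows $\weil_u \equiv q \equiv 0 \pmod{1-\zeta}$, whence $A,B \in (1-\zeta)\Zz$ and therefore $\sigma_1 \in (1-\zeta)\Zz \cap \Z = p\Z$. Finally, since $\ws \subseteq \{A,B\}$, every $\weil_u$ with $u \in \Ku$ satisfies $(\weil_u - A)(\weil_u - B)=0$; summing and inserting the two moments gives $q^2 - \sigma_1 q + \sigma_2(q-1) = 0$, so $(q-1)\mid(\sigma_1 - q)$ and $\sigma_2 = q(\sigma_1-q)/(q-1)$.

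The trivial bound $|\weil_u| \le q$ gives $|\sigma_1| \le 2q$, so $(q-1)\mid(\sigma_1-q)$ leaves only $\sigma_1 \in \{-2q+3,\,-q+2,\,1,\,q,\,2q-1\}$, and the divisibility $p \mid \sigma_1$ discards all but $\sigma_1 = q$ when $p \ge 5$; then $\sigma_2 = 0$, one value is $0$ and the other is $q$, which is attained, finishing that case. The place I expect the real work is the small characteristics $p \in \{2,3\}$, where the divisibility sieve leaves one extra candidate ($\sigma_1 = -q+2$ for $p=2$, giving the rational pair $\{2,-q\}$; and $\sigma_1 = -2q+3$ for $p=3$, giving an irrational conjugate pair). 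I would eliminate each by feeding $\sigma_1,\sigma_2$ back into the first two moments, solving for the frequencies $N_A,N_B$, and checking that they cannot both be nonnegative integers: for $p=2$ one gets $N_B = (q-2)/(q+2)$, a non-integer for $q \ge 5$; for $p=3$ the values are irrational conjugates, so applying a Galois swap to the first-moment equation forces $N_A = N_B = (q-1)/2$, after which that equation reduces to $2q^2 - 3q + 3 = 0$, which has no solution. With these cases excluded, $\sigma_1 = q$ is forced in every characteristic, the value $q$ appears, and the isolated fact of the second paragraph shows $s$ is degenerate.
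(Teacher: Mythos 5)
The paper does not actually prove \cref{Tor}: it is stated as a known theorem of Helleseth with a citation to his 1976 paper, so there is no internal proof to compare against, and your blind attempt must be judged on its own. Judged so, it is correct and complete, and it is essentially a reconstruction of Helleseth's original argument (power moments plus $p$-divisibility plus the Galois constraint), assembled from ingredients the present paper quotes piecemeal elsewhere: your Galois action is \eqref{Rachel}, your divisibility $\sigma_1\in(1-\zeta)\Zz\cap\Z=p\Z$ is the content of \cref{Issachar}, and your equal-frequency step for conjugate values is the mechanism of \cref{Judah}. I checked the details: the moments $\sum_{u\in\Ku}\weil_u=q$ and $\sum_{u\in\Ku}\weil_u^2=q^2$ are right (the second using $(-1)^s=-1$); the quadratic identity gives $(q-1)\mid(\sigma_1-q)$, and with $|\sigma_1|\le 2q$ and $p\mid\sigma_1$ the sieve leaves exactly the three cases you name; $\sigma_1=q$ forces $\ws=\{0,q\}$ so $q$ is attained; for $p=2$ the pair $\{2,-q\}$ yields the non-integral frequency $(q-2)/(q+2)$; and for $p=3$ the discriminant $\sigma_1^2-4\sigma_2=4q^2+9$ lies strictly between $(2q)^2$ and $(2q+1)^2$ for $q\ge 3$, so $A,B$ are indeed conjugate irrationals (a one-line check you omitted but should include), some automorphism swaps them since the value set is Galois-stable and $A\notin\Q$, and applying it to $N_AA+N_BB=q$ gives $N_A=N_B=(q-1)/2$ and then $2q^2-3q+3=0$, which is impossible. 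Your ``engine'' lemma is also sound via the linearized-polynomial comparison (the reduced supports are the cyclotomic cosets of $s$ and of $1$ modulo $q-1$ with nonzero coefficients, and a polynomial of degree less than $q$ vanishing on $K$ is zero), though as literally quantified your biconditional should read ``$\weil_u=q$ for \emph{some} $u\in\Ku$ iff $s$ is degenerate,'' since degeneracy produces the value $q$ only at $u=1$; only the two implications you actually use are true, and you use them correctly. The net effect is a self-contained proof of a statement the paper leaves as a citation, at the modest cost of re-deriving facts (moments, divisibility, Galois symmetry) that the paper imports from \cite{Katz-2012} and \cite{Helleseth}.
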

There is much interest in which pairs \((K,s)\) produce Weil spectra with few values (e.g., \(3\)-valued or \(4\)-valued spectra).
All known $3$-valued spectra have been classified into ten infinite families (see \cite[Table 7.1]{Katz-2019}), and \(4\)-valued spectra have been studied in \cite[Theorems 3-6, 3-7]{Niho}, \cite[Theorem 4.13]{Helleseth}, \cite[Proposition 1]{Dobbertin}, \cite[Theorem 6]{Helleseth-Rosendahl}, \cite[Theorem 23]{Dobbertin-Felke-Helleseth-Rosendahl}, \cite[Theorem II.5]{Zhang-Li-Feng-Ge}, and \cite[Theorem 1]{Xia-Helleseth-Wu}.
Although each Weil sum value is always an algebraic integer in some cyclotomic extension of \(\Q\), one observes that Weil spectra with few distinct values often have all of their values in \(\Z\).
We say that the Weil spectrum for \(K\) and \(s\) is \textit{rational} (or that \(\ws\) is {\it rational}) to mean \(\ws \subseteq \Z\).
Helleseth proved a simple criterion for rationality in \cite[Theorem 4.2]{Helleseth}.
\begin{theorem}[Helleseth, 1976]\label{carrot}
Let $K$ be a finite field of characteristic $p$ and $s$ be an invertible exponent over $K$.
Then the Weil spectrum for $K$ and $s$ is rational if and only if \(s \equiv 1 \pmod{p-1}\).
\end{theorem}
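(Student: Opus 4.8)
The plan is to exploit the action of $\Gal(\Qz/\Q) \cong \Fpu$ on the Weil spectrum. For $t \in \Fpu$, let $\sigma_t$ denote the automorphism of $\Qz$ with $\sigma_t(\zeta) = \zeta^t$; since each $\weil_u$ lies in $\Zz$, the spectrum is rational precisely when every $\weil_u$ is fixed by every $\sigma_t$. First I would compute this action explicitly. Because $t \in \Fpu$ and $\Tr$ is $\Fp$-linear, $\sigma_t(\weil_u) = \sum_{x \in K}\zeta^{\Tr(t x^s - t u x)}$; substituting $x = t^{-1/s}y$ (a permutation of $K$, using that $s$ is invertible) turns $t x^s$ into $y^s$ and $t u x$ into $u t^{1-1/s} y$, yielding the key identity $\sigma_t(\weil_u) = \weil_{u t^{1-1/s}}$.

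Next I would package the exponents. As $t$ ranges over $\Fpu$, the elements $h = t^{1-1/s}$ run through a subgroup $H \leq \Fpu \leq \Ku$, and since $\Fpu$ is cyclic of order $p-1$, with $1/s$ reducing modulo $p-1$ to the inverse of $s$ modulo $p-1$, one checks that $H$ is trivial exactly when $s \equiv 1 \pmod{p-1}$. With this in hand, the spectrum is rational if and only if $\weil_{uh} = \weil_u$ for all $u \in \Ku$ and all $h \in H$; that is, if and only if $u \mapsto \weil_u$ is constant on the cosets of $H$ in $\Ku$.

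The forward implication is then immediate: if $s \equiv 1 \pmod{p-1}$, then $H = \{1\}$, so every $\weil_u$ is $\sigma_t$-fixed and hence, being a rational algebraic integer, lies in $\Z$. The substance is the converse, and here the decisive ingredient is the first power moment $\sum_{u \in \Ku}\weil_u = q$, which follows from orthogonality of $\psi$ together with $\weil_0 = 0$. Suppose $s \not\equiv 1 \pmod{p-1}$, so that $d := |H| > 1$ with $d \mid p-1$, and suppose toward a contradiction that the spectrum is rational. Then $H$ acts freely on $\Ku$ by multiplication with all orbits of size $d$, and $u \mapsto \weil_u$ is a $\Z$-valued function constant on these orbits, so $d$ divides $\sum_{u \in \Ku}\weil_u = q = p^n$. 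But $d \mid p-1$ forces $\gcd(d,p) = 1$, so $d \mid p^n$ gives $d = 1$, a contradiction.

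I expect the Galois computation $\sigma_t(\weil_u) = \weil_{u t^{1-1/s}}$ to be the technical heart—getting the substitution and the reduction of the exponent $1/s$ modulo $p-1$ exactly right—while the moment-divisibility step is the clean finish that converts the nontriviality of $H$ into an arithmetic contradiction.
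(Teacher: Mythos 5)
Your proof is correct. One thing to note at the outset: the paper does not prove this theorem at all --- it is quoted as Helleseth's result with a citation to \cite[Theorem 4.2]{Helleseth} --- so there is no internal proof to compare against. What you have produced is a complete and self-contained argument, and it is built from exactly the ingredients the paper assembles for other purposes: your identity \(\sigma_t(\weil_u) = \weil_{u t^{1-1/s}}\) is the paper's equation \eqref{Rachel} (credited to \cite[Theorem 2.1(b)]{Katz-2012}); your computation that the subgroup \(H = \{t^{1-1/s} : t \in \Fpu\}\) has order \((p-1)/\gcd(p-1,s-1)\), hence is trivial iff \(s \equiv 1 \pmod{p-1}\), is the content of \cref{Simeon}; and your first power moment \(\sum_{u \in \Ku} \weil_u = q\) is the identity the paper invokes (via \cite[Proposition 3.1(b)]{Katz-2012}) in the proof of \cref{Naphtali}. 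All the individual steps check out: the substitution \(x = t^{-1/s}y\) is legitimate since \(t \in \Fpu\) has order dividing \(p-1\) and the inverse of \(s\) modulo \(q-1\) reduces modulo \(p-1\) to the inverse of \(s\) modulo \(p-1\); rationality is equivalent to Galois-invariance because the values lie in \(\Zz\) and \(\Z = \Q \cap \Zz\); the cosets of \(H\) in \(\Ku\) all have size \(d = |H|\), so rationality forces \(d \mid q = p^n\), which together with \(d \mid p-1\) yields \(d = 1\). The divisibility finish is a genuinely pleasant touch: it converts the nontriviality of the Galois orbit structure into an arithmetic contradiction using only the first moment, whereas one might otherwise have to reason about which specific values get moved. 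In effect you have rederived, inside the paper's own framework (a special case of the mechanism behind \cref{Judah}, where frequencies along \(\tau\)-orbits are multiples of \(m/k\)), a clean proof of the cited classical criterion.
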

Later, in \cite[Theorem 1.7]{Katz-2012}, it was proved that \(3\)-valued Weil spectra are invariably rational.
\begin{theorem}[Katz, 2012]\label{Natalie}
Let $K$ be a finite field and $s$ be an invertible exponent over $K$.
If the Weil spectrum for $K$ and $s$ is $3$-valued, then it is rational.
\end{theorem}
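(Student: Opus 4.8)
The plan is to exploit the action of $G=\Gal(\Qz/\Q)$ on the Weil spectrum and reduce rationality to a statement about orbit structure. For $a\in\Fpu$ let $\sigma_a\in G$ be the automorphism with $\sigma_a(\zeta)=\zeta^a$. Applying $\sigma_a$ to \eqref{Weil} and making the substitution $x\mapsto a^{-1/s}x$ (legitimate since $s$ is invertible) yields $\sigma_a(\weil_u)=\weil_{a^{1-1/s}u}$. Hence $G$ permutes the values in $\ws$, and since $u\mapsto a^{1-1/s}u$ carries the level set $\{u\in\Ku:\weil_u=A\}$ bijectively onto $\{u\in\Ku:\weil_u=\sigma_a(A)\}$, it preserves frequencies: $N_{\sigma_a(A)}=N_A$. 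By \cref{Real} all values are real, and $A\in\ws$ is rational exactly when its $G$-orbit is a singleton, so it suffices to show that $G$ fixes each of the three values. (One checks $a^{1-1/s}=1$ for all $a\in\Fpu$ precisely when $s\equiv1\pmod{p-1}$, which recovers the criterion of \cref{carrot}.)

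First I would record the two elementary power moments. Expanding $\sum_{u\in K}\weil_u^j$ and summing the additive character over $u$ restricts the sum to the hyperplane $x_1+\cdots+x_j=0$; for $j=1,2$ this gives $\sum_{u\in\Ku}\weil_u=q$ and $\sum_{u\in\Ku}\weil_u^2=q^2$ (using $\weil_0=0$ and that $s$ is odd when $p$ is odd). Suppose now the spectrum is not rational. With only three values, the nontrivial orbit structures are a single orbit of size three, or one fixed value together with a conjugate pair. In the first case all three values share one frequency $(q-1)/3$, so $3\mid q-1$, and their sum, being a $G$-invariant algebraic integer, is a rational integer; but the first moment forces this sum to equal $3q/(q-1)$, which is not an integer for $q\ge5$. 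Since a three-valued spectrum is nondegenerate by \cref{Tor} and nondegeneracy requires $q\ge5$, this case cannot occur.

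The decisive case is a rational value $A_1$ (of frequency $N_1$) together with a conjugate pair $A_2,A_3$ (of common frequency $M$). Because $G$ is cyclic, $A_2$ generates the unique quadratic subfield of $\Qz$, namely $\Q(\sqrt{p^*})$ with $p^*=(-1)^{(p-1)/2}p$; as the values are real, this field must be real, so already $p\equiv3\pmod4$ excludes the conjugate pair outright and forces rationality. The genuine difficulty is therefore confined to $p\equiv1\pmod4$. There the two elementary moments leave the system for $A_1$, $A_2+A_3$, $A_2A_3$, $N_1$, $M$ underdetermined, so I would bring in the third and fourth power moments $\sum_{u\in\Ku}\weil_u^3$ and $\sum_{u\in\Ku}\weil_u^4$: each is a rational integer (a $G$-invariant sum of algebraic integers, the pair entering only through its symmetric functions) and each also has a combinatorial expression as $q$ times a solution count on a hyperplane. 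Feeding these moments through Newton's identities for three prescribed roots, together with the congruence $\weil_u\equiv0\pmod{1-\zeta}$, should over-determine the quintuple and contradict $A_2\ne A_3$.

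I expect the main obstacle to be exactly this $p\equiv1\pmod4$ subcase. The clean integrality argument that dispatches a three-element orbit has no analogue here, since the symmetric functions of a conjugate pair are already rational and fully compatible with the low-order moments; the contradiction must instead come from genuinely new arithmetic in the higher moments, most plausibly a sharp evaluation or lower bound for $\sum_{u\in\Ku}\weil_u^4$, combined with the rigidity of $\Q(\sqrt{p^*})$. The Galois-theoretic reduction is routine; the real labor, and the place where the phenomenon separating three-valued from four-valued spectra (which genuinely fails for $|K|=5$) must appear, is in controlling the fourth moment.
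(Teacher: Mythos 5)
Your reduction is sound, and it matches the standard opening moves: the action $\sigma_a(\weil_u)=\weil_{a^{1-1/s}u}$ with preservation of frequencies, the exclusion of a size-three orbit via the first moment (this is exactly the $3$-valued instance of the single-cycle argument in \cref{Naphtali}: equal frequencies force $(q-1)\mid 3q$, impossible for $q\geq 5$, and nondegeneracy forces $q\geq 5$), and the exclusion of $p\equiv 3\pmod 4$ because the unique quadratic subfield of $\Qz$ would be imaginary while the values are real (the same fact the present paper extracts from \cref{Simeon}). For calibration: this paper does not prove \cref{Natalie} at all --- it imports it from \cite[Theorem 1.7]{Katz-2012} --- and the hard case of that proof is precisely the one you isolate, namely $\tau$ fixing one rational value and swapping a conjugate pair in $\Q(\sqrt{p})$ with $p\equiv 1\pmod 4$.

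That case is where your proposal has a genuine gap: what you offer there is a plan, and the mechanism you name would not close it. The third and fourth power moments are not independently known rational numbers that can over-determine your quintuple; by \cref{Silicon} they equal $q^2V^{[1,1]}_1$ and $q^2\sum_{u\in\Ku}\bigl(V^{[1,1]}_u\bigr)^2$, so each new moment imports a new unknown nonnegative point count, and Newton's identities leave the system exactly as underdetermined as before, while the congruence $\weil_u\equiv 0\pmod{1-\zeta}$ is far too coarse to separate $A_2$ from $A_3$. The arithmetic that actually kills this configuration is $p$-adic and structural, not moment-theoretic: with $\tau(\weil_u)=\weil_{-u}$ one writes $A_{2},A_3=(E\pm F\sqrt{p})/2$ with $E\equiv F\pmod 2$ as in \cref{Benjamin}, and the identity $\sum_{u\in\Ku}\Phi_u^2=2q^2$ for $\Phi_u=\weil_u-\weil_{-u}$ (\cref{Calcium}) yields the exact equation $N_{A_2}F^2p=q^2$, forcing $N_{A_2}$ and $F$ to be explicit powers of $p$; one must then run valuation bookkeeping on the frequency--value equations and group-algebra identities pinning down the point counts (compare \cref{Potassium} and \cref{M} of the transposition analysis in \cref{Barnabas}, the $4$-valued analogue, which culminates in a divisibility contradiction of the shape $p^{\ell}-1\mid p^{m-\ell}-1$ with $3\ell<m<4\ell$). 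Your closing paragraph candidly predicts that ``genuinely new arithmetic'' is needed here, and that prediction is accurate --- but it means the proposal, as written, proves the theorem only outside the one case that constitutes the substance of \cite[Theorem 1.7]{Katz-2012}.
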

Thus, in view of \cref{carrot}, when $K$ is a field of characteristic $p$ and $s\not\equiv 1 \pmod{p-1}$, the Weil spectrum for $K$ and $s$ cannot be $3$-valued.
Katz and Langevin set an open problem \cite[Problem 3.6]{Katz-Langevin}, part of which is to find an analogue of \cref{Natalie} for \(4\)-valued spectra.
The main result of this paper is this analogue, which we now state.
\begin{theorem} \label{MainTheorem}
Let $K$ be a finite field and $s$ be an invertible exponent over $K$.
If the Weil spectrum for $K$ and $s$ is $4$-valued, then it is rational unless \(K = \F_5\) and \(s \equiv 3 \pmod{4}\) (in which case \(\ws = \{(5 \pm \sqrt{5})/2, \pm \sqrt{5}\}\)).
\end{theorem}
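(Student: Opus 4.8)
The plan is to use \cref{carrot}: the spectrum is rational exactly when $s \equiv 1 \pmod{p-1}$, so it suffices to assume $s \not\equiv 1 \pmod{p-1}$ and show that a $4$-valued spectrum then forces $K = \F_5$ and $s \equiv 3 \pmod 4$. First I would dispose of $p = 2$, where $\Qz = \Q$ makes every value rational; so assume $p$ is odd, whence $q-1$ is even and $\gcd(s,q-1) = 1$ makes both $s$ and $1/s$ odd. Two standard facts are then recorded. The low power moments: orthogonality of $\psi$ gives $\sum_{u \in \Ku} \weil_u = q$, and, using that $s$ odd makes $(-x)^s = -x^s$, also $\sum_{u \in \Ku} \weil_u^2 = q^2$. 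The Galois action: for $t \in \Fpu$ the substitution $x \mapsto t^{-1/s} x$ shows $\sigma_t(\weil_u) = \weil_{t^{1-1/s}u}$, where $\sigma_t \in \Gal(\Qz/\Q)$ is $\zeta \mapsto \zeta^t$. Thus $\Gal(\Qz/\Q)$ permutes $\ws$, Galois-conjugate values share a common frequency (the index map $u \mapsto t^{1-1/s}u$ permutes $\Ku$), and by \cref{Real} the action factors through the quotient of order $(p-1)/2$; indeed $1-1/s$ is even, so $\sigma_{-1}$ fixes every index, matching reality.

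Writing $\ws = \{A_1,A_2,A_3,A_4\}$, the irrational values fall into $\Gal$-orbits of size at least $2$, and I would run through the possible orbit shapes on four points: a single orbit of size $4$; a size-$3$ orbit with one fixed point; two size-$2$ orbits; one size-$2$ orbit with two fixed points; or four fixed points (the rational case). The size-$4$ orbit is eliminated at once: with common frequency $m$ one has $4m = q-1$ and $\sum_i A_i = \Tr_{\Q(A_1)/\Q}(A_1) \in \Z$, so $\sum_{u \in \Ku}\weil_u = m\sum_i A_i = q$ forces $m \mid \gcd(q,q-1) = 1$, hence $q = 5$; but four conjugates cannot fit in $\Q(\sqrt5)$, the real subfield of $\Qz$ for $p = 5$, which has degree $2$. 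A size-$2$ orbit consists of conjugate real quadratic integers lying in the unique quadratic subfield $\Q(\sqrt{p^*})$ of $\Qz$, where $p^* = (-1)^{(p-1)/2}p$; reality forces $p^* > 0$, i.e.\ $p \equiv 1 \pmod 4$, and the two values take the shape $(a \pm b\sqrt p)/2$ with $b \neq 0$.

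The engine for the remaining cases is the multiplicative Fourier transform over $\Ku$. For nontrivial $\chi \in \mchars$, interchanging the sums in $\sum_{u \in \Ku} \chi(u)\weil_u$ and applying the substitution $x \mapsto x^{1/s}$ yields a product of two Gauss sums,
\[
\sum_{u \in \Ku} \chi(u)\weil_u = \chi(-1)\,g(\chi)\,g(\chi'), \qquad g(\chi) = \sum_{x \in \Ku}\chi(x)\psi(x),
\]
where $\chi'$ is the character $x \mapsto \conj{\chi}(x^{1/s})$; in particular every nontrivial Fourier coefficient has absolute value $q$. Inverting expresses each $\weil_u$ as an average of such Gauss-sum products, so $4$-valuedness of $\{\weil_u\}$ is a stringent condition on this family of products as $\chi$ ranges over $\mchars$. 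I would then invoke Stickelberger's theorem, which gives the exact valuation of $g(\chi)$ at each prime above $p$ in terms of base-$p$ digit sums of the exponent attached to $\chi$; the demand that the products $\chi(-1)g(\chi)g(\chi')$ reassemble into only four real values of the quadratic type above becomes a rigid combinatorial condition linking the digits of $\chi$ and of $\chi'$ (hence $s$). The surviving quadratic irrationalities are governed by the quadratic Gauss sum $g(\eta) = \pm\sqrt q$ (here $\eta(-1) = 1$ since $p \equiv 1 \pmod 4$), which reproduces the exceptional values $(5 \pm \sqrt5)/2$ and $\pm\sqrt5$ once $q = 5$.

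I expect the heart of the difficulty to be exactly the two configurations built from size-$2$ orbits, namely two such orbits or one such orbit together with two rational fixed points. The moment relations $\sum_i N_{A_i} = q-1$, $\sum_i N_{A_i} A_i = q$, $\sum_i N_{A_i} A_i^2 = q^2$ are underdetermined there---a Cauchy--Schwarz check shows they carry no contradiction by themselves---so the decisive input must be arithmetic. The main obstacle is to show, from the Stickelberger factorization of $\chi(-1)g(\chi)g(\chi')$ together with the exactly-four-values constraint, that $q$ is bounded and in fact equals $5$; the same digit analysis simultaneously excludes the size-$3$ configuration, whose cubic values (requiring $3 \mid (p-1)/2$) cannot be produced by a product of two Gauss sums meeting the forced valuations. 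Once $q = 5$ is pinned down, a direct inspection of the exponents $s$ with $\gcd(s,4) = 1$ singles out $s \equiv 3 \pmod 4$ as the only irrational case and evaluates $\ws = \{(5 \pm \sqrt5)/2, \pm\sqrt5\}$, finishing the proof.
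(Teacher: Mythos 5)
Your setup is sound and coincides with the paper's: after disposing of $p=2$ via \cref{carrot}, you use the Galois action $\sigma_t(\weil_u)=\weil_{t^{1-1/s}u}$ to act on $\ws$, note that conjugate values have equal frequencies, split into orbit shapes on four points, eliminate the full $4$-cycle by the counting argument $4m=q-1$, $m\sum_i A_i=q$, $\gcd(q,q-1)=1$ (essentially the paper's \cref{Naphtali}), and place any size-$2$ orbit in $\Q(\sqrt p)$ with $p\equiv 1\pmod 4$ (the paper's \cref{Benjamin}). But the proof stops exactly where the theorem's content begins. For the three remaining nontrivial configurations --- a $3$-cycle (\cref{Paul}), two disjoint transpositions (\cref{Enoch}), and a single transposition (\cref{Barnabas}) --- you supply no argument: you assert that Stickelberger's theorem will impose ``a rigid combinatorial condition'' forcing $q=5$, but you derive no such condition and carry out no digit analysis. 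As you yourself observe, the moment relations $\sum N_{A_i}=q-1$, $\sum N_{A_i}A_i=q$, $\sum N_{A_i}A_i^2=q^2$ are consistent with these configurations for every $q$, so the entire burden of the theorem rests on the unexecuted step.

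Moreover, the proposed Stickelberger route faces a structural obstacle that makes it doubtful as stated. Your Gauss-sum formula for the Fourier coefficients is correct, but each individual value $\weil_u$ is recovered as an average of $q-1$ products $\chi(-1)g(\chi)g(\chi')$; the hypothesis ``exactly four values'' therefore does not localize to valuation conditions on individual products, and one would have to control cancellation across all $\chi\in\mchars$ --- which is precisely the hard part, not a reduction of it. The paper's actual leverage in these cases comes from a different source: higher power moments tied to integral point counts, namely $\sum_{u\in\Ku}\weil_u^3=q^2 V^{[1,1]}_1$ and $\sum_{u\in\Ku}\weil_u^4=q^2\sum_{u\in\Ku}\bigl(V^{[1,1]}_u\bigr)^2$ (\cref{Silicon}), where $V^{[1,1]}_u=Q^{(1,1)}_{1,u}\ge 0$ and $V^{[1,-1]}_u=Q^{(1,-1)}_{1,u}-1\ge -1$ count solutions of $v_1+v_2=1$, $v_1^s+v_2^s=u^s$ (\cref{Phosphorus}). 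Combining this integrality and nonnegativity with the archimedean and $p$-adic bounds of \cref{Benjamin} forces $q\le 5$ in the two-transposition case; the single-transposition case requires substantially more, including divisibility extracted from the unit relation $W\conj{W}=q^2[1]$ (giving $\beta=1$ in \cref{Barnabas}) and the terminal contradiction $\ell\mid m$ against $3\ell<m<4\ell$. Nothing in your proposal supplies these inputs or a workable substitute, so the argument has a genuine gap at all three hard cases, and the concluding evaluation of $\ws$ for $q=5$, $s\equiv 3\pmod 4$ is reached only conditionally.
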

By \cref{carrot}, this means that, other than in the exceptional case when $|K|=5$ and $s\equiv 3 \pmod{4}$, the condition $s\equiv 1 \pmod{p-1}$ is necessary for the Weil spectrum to be \(4\)-valued.
Since the Walsh spectrum of the power permutation $x\mapsto x^s$ over $K$ is obtained from the Weil spectrum for $K$ and $s$ by including $\weil_0=0$, Theorems \ref{Natalie} and \ref{MainTheorem} show that all the values in a four-valued Walsh spectrum must lie in $\Z$.

The remainder of this paper is devoted to proving \cref{MainTheorem}.
We start in \cref{NUMBERS} by using Galois theory and algebraic number theory to study the structure of Weil spectra. 
Then, in \cref{BOUNDS}, we present some archimedean and $p$-adic bounds on Weil sum values.
\cref{SETS} introduces some algebraic sets over finite fields, which we then relate to Weil sums in \cref{ALGEBRAS} via a group algebra.
Finally, we prove \cref{MainTheorem} in \cref{ACTIONS}.

\section{Algebraic number theory}\label{NUMBERS}

In this section we introduce the number systems that are used in our proof of \cref{MainTheorem}.  
Algebraic number theory provides several results that constrain the structure of Weil spectra and thus help us achieve our proof.

Recall that \(K\) is a finite field of characteristic \(p\) and order \(q = p^n\), that \(s\) is a positive integer such that \(\gcd(s,q-1) = 1\), and that \(\zeta = \exp(2 \pi i/p)\).
We use \(\N\) to denote the set of nonnegative integers and $\Z_+$ to denote the set of strictly positive integers.
We know that \(\Gal(\Qz/\Q)\) is a cyclic group of order \(p-1\); an element of this Galois group fixes all elements of \(\Q\) and maps \(\zeta\) to \(\zeta^j\) for some \(j \in \Fpu\).
Let \(\Fpgen\) denote a primitive element of the prime subfield \(\Fp\) and let \(\sigma\) denote the automorphism in \(\Gal(\Qz/\Q)\) that maps \(\zeta\) to \(\zeta^\Fpgen\): note that \(\sigma\) is a generator of the Galois group.
Then \cite[Theorem 2.1(b)]{Katz-2012} shows that \(\sigma(W_u) = W_{\Fpgen^{1-1/s} u}\) for every \(u \in K\), where \(1/s\) is interpreted as the multiplicative inverse of \(s\) modulo \(p-1\).
Thus, \(\sigma\) maps the value set \(\ws\) (see \eqref{William}) to itself.
From now on, we let \(\tau\colon \ws \to \ws\) be the permutation obtained by restricting \(\sigma\), so that for every $u \in K$, we have
\begin{equation}\label{Rachel}
\tau(W_u) = W_{\Fpgen^{1-1/s} u},
\end{equation}
where \(1/s\) is interpreted as the multiplicative inverse of \(s\) modulo \(p-1\).

The following result indicates important relationships between the exponent $s$, the characteristic $p$ of the field $K$, the order of $\tau$, the order of the element $\gamma^{1-1/s}$ in \eqref{Rachel}, and the degree of the extension of $\Q$ generated by the values in the Weil spectrum.
\begin{proposition}\label{Simeon}
The following are all equal:
\begin{enumerate}[label = (\roman*)]
\item the order of the permutation $\tau$ of $\ws$,
\item the degree, $[\Q(\ws):\Q]$, of the field extension of the rationals generated by $\ws$,
\item the order of $\Fpgen^{1-1/s}$ in $\Fpu$ (where $1/s$ indicates the multiplicative inverse of $s$ modulo $p-1$), and
\item the quantity $(p-1)/\gcd(p-1,s-1)$.
\end{enumerate}
Let $m$ denote the common value of these.
If \(p=2\), then \(m=1\), but if \(p > 2\), then \(p \equiv 1 \pmod{2 m}\).
\end{proposition}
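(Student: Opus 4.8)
The plan is to split the four quantities into two pairs that are each easy to handle and then to forge the link between the pairs. Throughout, write $g = \Fpgen^{1-1/s} \in \Fpu$ for the element appearing in \eqref{Rachel}, and note that $s$ is invertible modulo $p-1$ since $\gcd(s,q-1)=1$ and $p-1$ divides $q-1$. The equality of (iii) and (iv) is elementary: because $\Fpgen$ generates the cyclic group $\Fpu$ of order $p-1$, the order of $g$ is $(p-1)/\gcd(p-1,\,1-1/s)$, and since $1/s$ is a unit modulo $p-1$ we have $\gcd(p-1,\,1-1/s)=\gcd(p-1,\,s-1)$. The equality of (i) and (ii) is Galois theory: the field $\Q(\ws)$ is a subextension of the abelian extension $\Qz/\Q$ and is therefore itself Galois over $\Q$, while $\sigma$ stabilizes it because $\sigma$ permutes $\ws$. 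Hence the restriction of $\sigma$ generates $\Gal(\Q(\ws)/\Q)$ and has order $[\Q(\ws):\Q]$; since $\ws$ generates this field, that restriction has the same order as $\tau$.

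The crux --- and the step I expect to be the main obstacle --- is to prove that the order of $\tau$ equals the order of $g$. Iterating \eqref{Rachel} gives $\tau^k(W_u)=W_{g^k u}$ for all $k$ and all $u$, so if $d$ denotes the order of $g$ then $\tau^d=\mathrm{id}$ and the order of $\tau$ divides $d$. For the reverse divisibility I would prove that any $c\in\Fpu$ with $W_{cu}=W_u$ for all $u\in K$ must equal $1$. This follows from the Fourier-type identity
\[
\sum_{u\in K} W_u\,\psi(uy) = q\,\psi(y^s)\qquad(y\in K),
\]
obtained by expanding each $W_u$ and applying orthogonality of $\psi$. Assuming $W_{cu}=W_u$ and reindexing the left-hand sum by $v=cu$ yields $\psi(y^s)=\psi((y/c)^s)$ for all $y$; since $x\mapsto x^s$ permutes $K$, this forces $\psi(z(1-c^{-s}))=1$ for every $z\in K$, and nondegeneracy of the trace pairing gives $c^s=1$, whence $c=1$ because $\gcd(s,p-1)=1$. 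Applying this with $c=g^e$, where $e$ is the order of $\tau$ (so $\tau^e=\mathrm{id}$ gives $W_{g^e u}=W_u$), shows $g^e=1$, i.e.\ $d$ divides $e$. Thus the order of $\tau$ equals $d$, which closes the chain of equalities.

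For the final assertion, let $m$ be the common value. If $p=2$ then $p-1=1$, so $m=(p-1)/\gcd(p-1,s-1)=1$. If $p$ is odd, then $p-1$ is even, and $s-1$ is even as well: since $q=p^n$ is odd, $q-1$ is even, so $\gcd(s,q-1)=1$ forces $s$ to be odd. Hence $\gcd(p-1,s-1)$ is even, and writing $m=(p-1)/\gcd(p-1,s-1)$ shows that $2m$ divides $p-1$, that is, $p\equiv 1\pmod{2m}$.
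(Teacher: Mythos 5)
Your proof is correct, and at the two load-bearing points it takes a genuinely different route from the paper. The paper handles (i) $=$ (ii) by the same Galois-correspondence argument you give, and (iii) $=$ (iv) by essentially your unit observation (it writes $\Fpgen^{1-1/s}=(\Fpgen^{1/s})^{s-1}$ with $\Fpgen^{1/s}$ a generator of $\Fpu$), but it closes the chain by citing \cite[Lemma 5.3]{Aubry-Katz-Langevin} for the equality $[\Q(\ws):\Q]=(p-1)/\gcd(p-1,s-1)$, whereas you prove (i) $=$ (iii) directly. Your key lemma is sound: the orthogonality identity $\sum_{u\in K}W_u\psi(uy)=q\psi(y^s)$ is correct, the reindexing $v=cu$ turns the hypothesis $W_{cu}=W_u$ into $\psi(z(1-c^{-s}))=1$ for all $z\in K$, and nontriviality of $\psi$ together with $\gcd(s,p-1)=1$ forces $c=1$; moreover $\tau^e=\mathrm{id}$ really does give $W_{cu}=W_u$ for all $u\in K$ with $c=\Fpgen^{(1-1/s)e}$, since every $W_u$ with $u\in\Ku$ lies in $\ws$ and the case $u=0$ is trivial. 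So your version is self-contained exactly where the paper outsources the hardest equality. The final congruence shows a similar divergence: the paper invokes \cref{Real} to place $\Q(\ws)$ inside $\Qz\cap\R=\Q(\zeta+\zeta^{-1})$, an extension of degree $(p-1)/2$, so that $m\mid(p-1)/2$; you instead note that $q-1$ even and $\gcd(s,q-1)=1$ force $s$ odd, hence $\gcd(p-1,s-1)$ even, hence $2m\mid p-1$ directly from (iv). These are the same constraint in different clothing---realness of the Weil sums is itself a consequence of $s$ being odd, via the substitution $x\mapsto -x$---but your parity argument is more elementary, needing nothing about Weil sums beyond invertibility of $s$, while the paper's route makes the conceptual source visible: the values are real, so they must lie in the maximal real subfield of $\Qz$.
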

\begin{proof}
Since $\Q(\ws)$ is a subfield of $\Qz$ and since $\Gal(\Qz/\Q)$ is a cyclic group generated by $\sigma$, the Galois correspondence shows that $[\Q(\ws):\Q]$ equals the order of the restriction to $\Q(\ws)$ of $\sigma$, which is the same as the order of $\tau$.
Lemma 5.3 of \cite{Aubry-Katz-Langevin} shows that $[\Q(\ws):\Q]$ equals $(p-1)/\gcd(p-1,s-1)$, which is the order of $\Fpgen^{1-1/s}=(\Fpgen^{1/s})^{s-1}$ because $\Fpgen$ has order $p-1$ and $s$ is invertible modulo $p-1$ (since $\gcd(s,q-1)=1$).

If \(p=2\), then \(\Qz = \Q(-1) = \Q\), so \(\Q(\ws) = \Q\) and \(m=1\).
When \(p > 2\), \cref{Real} shows that \(\Q(\ws)\) is a subfield of \(\Qz \cap \R = \Q(\zeta+\zeta^{-1})\), an extension of $\Q$ of degree $(p-1)/2$, and so $m \mid (p-1)/2$. 
\end{proof}

\begin{remark}
\cref{Simeon} shows that \(\ws\) is rational when \(p=2\) or \(3\).
\end{remark}  
Recall from \eqref{Nora} that the frequency of a value $A$ in the Weil spectrum is \(N_A = |\{u \in \Ku : W_u = A\}|\).
The action of $\tau$ on the Weil spectrum gives us information about these frequencies.
\begin{lemma}\label{Judah}
Suppose that \(\tau\) has order \(m\), and let \(A_0, A_1, \ldots, A_{k-1}\) be distinct elements of \(\ws\) that \(\tau\) permutes in a \(k\)-cycle, that is, \(\tau(A_i)=A_{i+1}\) for every \(i\in\Z/k\Z\).
Then \(k \mid m\) and \(N_{A_0} = N_{A_1} = \cdots = N_{A_{k-1}}\), which is a multiple of \(m/k\).
\end{lemma}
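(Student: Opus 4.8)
The plan is to transport the entire question from the value set $\ws$, on which $\tau$ acts, to the index group $\Ku$, on which a closely related map acts, and then count orbits. Set $c = \Fpgen^{1-1/s}$, an element of $\Fpu$, so that \eqref{Rachel} reads $\tau(W_u) = W_{cu}$ for every $u \in \Ku$. Since $\Fpu$ is a subgroup of $\Ku$, multiplication by $c$ is a permutation $\rho\colon \Ku \to \Ku$ given by $\rho(u) = cu$, and the identity $\tau(W_u) = W_{\rho(u)}$ is the bridge between the two actions.

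First I would dispose of the claim $k \mid m$. Because $m$ is the order of the permutation $\tau$, it equals the least common multiple of the lengths of the cycles of $\tau$; as $k$ is one such cycle length, $k \mid m$ is immediate.

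Next I would prove the equalities among the frequencies. From $\tau(W_u) = W_{cu}$ we get that $W_u = A$ implies $W_{cu} = \tau(A)$, and conversely $W_{c^{-1}u'} = \tau^{-1}(W_{u'})$; hence $\rho$ restricts to a bijection of $\{u \in \Ku : W_u = A\}$ onto $\{u \in \Ku : W_u = \tau(A)\}$. This gives $N_A = N_{\tau(A)}$ for every $A \in \ws$, and applying it repeatedly along the cycle yields $N_{A_0} = N_{A_1} = \cdots = N_{A_{k-1}}$.

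The remaining and most delicate part is the divisibility of this common frequency by $m/k$, and this is where I expect the real work to lie. By \cref{Simeon}(iii), $c$ has order $m$ in $\Fpu$, hence also in $\Ku$, so $\langle c \rangle$ acts freely on $\Ku$ by multiplication and every $\rho$-orbit has size exactly $m$. Let $T = \{u \in \Ku : W_u \in \{A_0, \dots, A_{k-1}\}\}$. Using $W_{c^j u} = \tau^j(W_u)$, I would check that any $\rho$-orbit meeting $T$ is contained in $T$, and that along such an orbit $u, cu, \dots, c^{m-1}u$ the values $\tau^j(W_u)$ run periodically through $A_0, \dots, A_{k-1}$ with exact period $k$; since $k \mid m$, each $A_i$ then occurs exactly $m/k$ times in the orbit. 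Writing $T$ as a disjoint union of $r$ such orbits and counting the occurrences of a fixed $A_i$ would give $N_{A_i} = r\,(m/k)$, a multiple of $m/k$. The only genuinely delicate points are verifying that an orbit meeting $T$ lies wholly inside $T$ and that the period is exactly $k$ rather than a proper divisor; both follow from the hypothesis that $A_0, \dots, A_{k-1}$ are distinct and form a true $k$-cycle under $\tau$.
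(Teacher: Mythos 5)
Your proof is correct and is essentially the paper's argument: both rest on the identity $W_{c^j u} = \tau^j(W_u)$ coming from \eqref{Rachel}, the order computation in \cref{Simeon}(iii), and a coset-counting argument in $\Ku$ (together with the standard fact that a cycle length divides the order of a permutation). The only cosmetic difference is in packaging: the paper decomposes each fiber $U_i = \{u \in \Ku : W_u = A_i\}$ directly into cosets of the order-$m/k$ subgroup $\langle c^k \rangle$, whereas you decompose the union of the fibers into full $\langle c \rangle$-cosets of size $m$ and observe that each such coset meets each fiber exactly $m/k$ times --- the same count, since the intersection of a $\langle c\rangle$-coset with $U_i$ is precisely a coset of $\langle c^k\rangle$.
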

\begin{proof}
Let \(U_i = \{u \in \Ku: W_u = A_i\}\) for each \(i \in \Z/k\Z\) and let \(\lambda = \Fpgen^{1-1/s}\), where we interpret \(1/s\) as the multiplicative inverse of \(s\) modulo \(p-1\).
For \(u \in U_0\) and \(j \in \Z\) we have, by \eqref{Rachel}, that \(W_{\lambda^j u} = \tau^j(W_u) = \tau^j(A_0) = A_{j \bmod{k}}\).
In particular, we have \(k \mid m\) since \(\tau\) has order \(m\). 
Moreover, \(W_{\lambda^k u} = A_0 = W_u\), so \(U_0\) is a union of cosets of the subgroup \(\langle\lambda^k\rangle\) of the group \(\Ku\).
Since \(\lambda\) is of order \(m\) by \cref{Simeon} and \(k\mid m\), this subgroup is of order \(m/k\), and so \(N_{A_0} = |U_0|\) is a multiple of \(m/k\).
Lastly, for any \(j \in \Z\), the map \(u \mapsto \lambda^j u\) provides a bijection from \(U_0\) to \(U_{j \bmod{k}}\) because we have seen that \(W_{\lambda^j u} = A_{j \bmod{k}}\) for every \(u \in U_0\), and we can similarly prove \(W_{\lambda^{-j} v} = \tau^{-j}(W_v) = \tau^{-j}(A_{j \bmod{k}}) = A_0\) for every \(v \in U_{j \bmod{k}}\).
\end{proof}

Let \(f\) be a permutation of a finite set $X$.
The \textit{cycle type of \(f\)} is the multiset of lengths of cycles that is obtained when \(f\) is written as a composition of disjoint cycles.
Note that the sum of the values in the cycle type of a permutation \(f\) is equal to the size of the set being permuted.
We say that $f$ {\it is a single cycle} to mean that $f$ can be written as a single cycle that contains all elements of $X$.
The next two results explore constraints on the cycle type of $\tau$.
\begin{lemma}\label{Dan}
When \(p=2\), the cycle type of \(\tau\) is a collection of \(|\ws|\) instances of \(1\).
When \(p\) is odd, the cycle type of \(\tau\) contains no number larger than \((p-1)/2\).
\end{lemma}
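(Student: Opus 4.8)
The plan is to derive both assertions directly from \cref{Simeon}, which has already done the substantive work by identifying the order of $\tau$ as $m = (p-1)/\gcd(p-1,s-1)$ and by relating $m$ to $p$. The one elementary fact I would bring in is that the order of any permutation of a finite set is the least common multiple of its cycle lengths; consequently every length appearing in the cycle type of $\tau$ must divide $m$. With this observation, each part of the lemma reduces to a bound on $m$.

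For the case $p = 2$, \cref{Simeon} gives $m = 1$, so $\tau$ is the identity permutation of $\ws$. Every element of $\ws$ is then a fixed point of $\tau$, and since $\ws$ has $|\ws|$ elements, its decomposition into disjoint cycles consists of exactly $|\ws|$ cycles, each of length $1$. This establishes the first statement.

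For odd $p$, \cref{Simeon} gives $p \equiv 1 \pmod{2m}$, that is, $2m \mid p-1$. Since $p > 2$ forces $p - 1 \ge 2 > 0$, this divisibility yields $2m \le p - 1$, hence $m \le (p-1)/2$. Now let $k$ be any length occurring in the cycle type of $\tau$. As noted above, $k$ divides the order $m$ of $\tau$, so $k \le m \le (p-1)/2$, which is precisely the claimed bound.

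Since the real content is packaged in \cref{Simeon}, there is no genuine obstacle in this lemma: the argument is a short deduction rather than a construction, and the only point requiring care is the standard fact that each cycle length divides the permutation's order, together with the elementary passage from the congruence $p \equiv 1 \pmod{2m}$ to the inequality $m \le (p-1)/2$.
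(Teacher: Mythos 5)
Your proof is correct and follows essentially the same route as the paper's: both arguments extract the bound on the order $m$ of $\tau$ from \cref{Simeon} (identity map when $p=2$; $m \leq (p-1)/2$ from $p \equiv 1 \pmod{2m}$ when $p$ is odd) and then invoke the fact that every cycle length divides the order of the permutation. The only cosmetic difference is that for $p=2$ the paper re-observes directly that $\Qz = \Q$ makes $\tau$ the identity, whereas you cite the conclusion $m=1$ of \cref{Simeon}, which amounts to the same thing.
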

\begin{proof}
Let \(m\) be the order of \(\tau\).
When \(p=2\), the field \(\Qz = \Q(-1) = \Q\), so \(\sigma\) and \(\tau\) are identity maps.
When \(p\) is odd, \cref{Simeon} implies that \(m \leq (p-1)/2\), so the desired result follows  since \(m\) is the least common multiple of all the numbers in the cycle type of \(\tau\).
\end{proof}

\begin{proposition}\label{Naphtali}
The permutation \(\tau\) is a single cycle if and only if \(K = \F_2\) (and then \(s\) is degenerate and \(\tau\) is a \(1\)-cycle). 
\end{proposition}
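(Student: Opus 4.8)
The plan is to establish both implications, with the reverse (``only if'') direction carrying all the weight. The forward direction is immediate: if $K = \F_2$, then $\Ku = \{1\}$ and evaluating \eqref{Weil} gives $W_1 = \sum_{x \in \F_2} \psi(x^s - x) = \psi(0) + \psi(0) = 2$, so $\ws = \{2\}$ is a singleton. Since $p = 2$ forces $\tau$ to be the identity by \cref{Dan}, this $\tau$ is a single $1$-cycle, and $s$ is degenerate because $q - 1 = 1$.

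For the reverse direction, I would assume $\tau$ is a single cycle and derive $K = \F_2$. A single cycle on $\ws$ has length $|\ws|$, so the order of $\tau$ is $m = |\ws|$. I would first pin down the frequencies: applying \cref{Judah} to the single $k$-cycle with $k = |\ws| = m$ shows that all the frequencies $N_A$ for $A \in \ws$ are equal to one common value $N$, each a multiple of $m/k = 1$, and counting all of $\Ku$ gives $N\,|\ws| = q - 1$. Next I would use two rationality facts. On one hand, $\sum_{A \in \ws} A$ is fixed by $\sigma$ (because $\tau$ permutes $\ws$) and is a sum of algebraic integers, hence it is a rational integer $S$. On the other hand, a one-line orthogonality computation gives $\sum_{u \in \Ku} W_u = q$: summing \eqref{Weil} over all $u \in K$ and swapping the order of summation leaves only the $x = 0$ term (since $\sum_{u \in K} \psi(-ux) = 0$ for $x \neq 0$), and $W_0 = 0$. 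Writing this sum as $\sum_{A \in \ws} N_A A = N S$ yields $q = N S$, so $N \mid q$; together with $N \mid q - 1$ this forces $N \mid \gcd(q, q - 1) = 1$, i.e.\ $N = 1$.

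With $N = 1$ we get $|\ws| = q - 1$, hence $m = |\ws| = q - 1$. The degree bound of \cref{Simeon} then closes the argument. If $p = 2$, then $m = 1$, so $q - 1 = 1$ and $K = \F_2$, as desired. If $p$ is odd, then \cref{Simeon} gives $m \le (p-1)/2$, which is incompatible with $m = q - 1 \ge p - 1 > (p-1)/2$; so the odd-characteristic case cannot occur. I expect the crux to be the observation that the first power sum $\sum_{u \in \Ku} W_u = q$ is coprime to the total frequency $q - 1$, which combined with the uniform frequencies from \cref{Judah} collapses the common frequency to $N = 1$; once $|\ws| = q - 1$ is in hand, the clash with the bound $m \le (p-1)/2$ is routine.
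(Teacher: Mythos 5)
Your proposal is correct and follows essentially the same route as the paper's proof: uniform frequencies from \cref{Judah}, the first power moment \(\sum_{u \in \Ku} W_u = q\) combined with the rationality of \(\sum_{A \in \ws} A\) to make the common frequency divide \(\gcd(q,q-1)=1\), hence \(|\ws| = q-1\), and then the bound \(m \leq (p-1)/2\) from \cref{Simeon} (which the paper invokes through \cref{Dan}) for the contradiction. The only cosmetic differences are that you prove the power-moment identity directly where the paper cites \cite[Proposition 3.1(b)]{Katz-2012}, and you fold the \(p=2\) case into the same counting argument instead of dispatching it separately at the outset.
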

\begin{proof}
Suppose \(p=2\).
\cref{Dan} shows that \(\tau\) is a single cycle if and only if \(|\ws| = 1\), which happens exactly when \(K = \F_2\) (and then every exponent is degenerate and $\tau$ is a $1$-cycle).

Now suppose \(p\) is odd. 
Let \(\ws = \{A_0, \ldots, A_{k-1}\}\) and suppose for a contradiction that \(\tau\) is a single cycle.  
Then \(N_{A_0} = \cdots = N_{A_{k-1}}\) by \cref{Judah}.
The sum \(\sum_{u \in \Ku} W_u\) of \(q-1\) Weil sum values is equal to \(q\) by \cite[Proposition 3.1(b)]{Katz-2012}, so that
\begin{align*}
k N_{A_0} 				   	& = q-1 \text{ and} \\
N_{A_0}(A_0 + \cdots + A_{k-1}) 	& =  q.
\end{align*}
Note that \(A_0+\cdots+A_{k-1} \in \Z\) since it is an algebraic integer fixed by \(\sigma\) (of which $\tau$ is a restriction).
Thus, \(N_{A_0}\) is a common divisor of \(q\) and \(q-1\), and hence \(N_{A_0} = 1\) and \(k=q-1\).
But then, by \cref{Dan}, we must have \((p-1)/2 \geq k = q-1 \geq p-1\), which is impossible.  
\end{proof}

\section{Bounds on Weil sum values} \label{BOUNDS}

In this section we discuss some archimedean and non-archimedean bounds on the Weil sum \(\weil_u\) that are used in proving the main result (\cref{MainTheorem}).
Recall that we use \(\N\) to refer to the set of nonnegative integers.
We use the \(p\)-adic valuation, \(v_p\).
One begins with \(v_p \colon \Z \to \N \cup \{\infty\}\), where \(v_p(0) = \infty\) and \(v_p(a) = \max\{j \in \N: p^j \mid a\}\) when \(a \not= 0\).
Then one extends the domain of \(v_p\) to \(\Q\) by letting \(v_p(a/b) = v_p(a) - v_p(b)\) when \(a,b \in \Z\) and \(b \not= 0\).
Furthermore, one can extend the domain of \(v_p\) to \(\Qz\), in which case \(v_p(\zeta-1) = 1/(p-1)\); see \cite[Theorem 4.1]{Lang-Algebra}, \cite[p.~7]{Lang}, and \cite[p.~218]{Helleseth}. 
For the purposes of this paper, the most important facts about \(v_p\) (which we shall use without proof) are that \(v_p(a b) = v_p(a) + v_p(b)\) and that \(v_p(a+b) \geq \min\{v_p(a), v_p(b)\}\), with \(v_p(a+b) = \min\{v_p(a), v_p(b)\}\) if \(v_p(a) \neq v_p(b)\).

From \eqref{Weil}, we know that Weil sums are sums of $p$th roots of unity, so we first explore linear combinations of these roots.
\begin{lemma}\label{Gad}
For any \(t \in \Q\) and any \(v \in \Qz\), there is one and only one way to write \(v\) as a \(\Q\)-linear combination of \(1,\zeta,\ldots,\zeta^{p-1}\) such that the coefficients sum to \(t\).
\end{lemma}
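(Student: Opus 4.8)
The plan is to treat the representations of a fixed $v$ as an affine subspace of $\Q^p$ and to study the coefficient-sum functional on that subspace. Concretely, I would introduce the $\Q$-linear map $\Phi \colon \Q^p \to \Qz$ defined by $\Phi(c_0, \ldots, c_{p-1}) = \sum_{i=0}^{p-1} c_i \zeta^i$. The claim is then equivalent to saying that for each $v \in \Qz$ and each $t \in \Q$, the fiber $\Phi^{-1}(v)$ contains exactly one tuple whose coordinates sum to $t$, so the whole argument reduces to understanding the fibers of $\Phi$ and intersecting them with the affine hyperplane of coordinate-sum $t$.

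The first step is to pin down the kernel of $\Phi$. Because $\Gal(\Qz/\Q)$ is cyclic of order $p-1$, we have $[\Qz:\Q] = p-1$, and since the image of $\Phi$ contains the spanning set $1, \zeta, \ldots, \zeta^{p-1}$, the map $\Phi$ is surjective; hence its kernel has dimension $p - (p-1) = 1$. The cyclotomic relation $1 + \zeta + \cdots + \zeta^{p-1} = 0$ shows that the all-ones vector $\mathbf{1} = (1, \ldots, 1)$ lies in $\ker \Phi$, and as $\mathbf{1} \neq 0$ it must span this one-dimensional kernel. Consequently, fixing any single representation $c \in \Phi^{-1}(v)$, the entire fiber is the coset $\Phi^{-1}(v) = \{\, c + \lambda \mathbf{1} : \lambda \in \Q \,\}$.

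It then remains to evaluate the coefficient-sum functional $S(c_0, \ldots, c_{p-1}) = \sum_i c_i$ along this coset. Since $S(c + \lambda \mathbf{1}) = S(c) + p\lambda$ and $p \neq 0$, the quantity $S(c + \lambda\mathbf{1})$ runs bijectively over $\Q$ as $\lambda$ runs over $\Q$; in particular it equals $t$ for exactly one value, namely $\lambda = (t - S(c))/p$. This simultaneously yields the existence and the uniqueness asserted in the lemma. I do not expect a genuine obstacle here: the only substantive input is identifying $\ker \Phi$ as the line through $\mathbf{1}$, and this is immediate from the degree of $\Qz$ over $\Q$ together with the cyclotomic relation, after which the conclusion is a one-line computation with the functional $S$.
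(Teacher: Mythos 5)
Your proof is correct and takes essentially the same approach as the paper's: both arguments rest on identifying the kernel of the evaluation map $(c_0,\ldots,c_{p-1})\mapsto\sum_i c_i\zeta^i$ as the line spanned by $(1,\ldots,1)$ (via surjectivity onto the $(p-1)$-dimensional space $\Qz$ together with the relation $1+\zeta+\cdots+\zeta^{p-1}=0$), combined with the fact that the coordinate-sum functional takes the nonzero value $p$ on that vector. The paper packages this as injectivity of the combined map $\Q^p\to\Qz\times\Q$, whereas you solve within the coset $c+\Q(1,\ldots,1)$ for the unique representative with coefficient sum $t$, but the substance is identical.
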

\begin{proof}
Our claim will follow if we show that the map $\phi\colon \Q^p \to \Q(\zeta)\times\Q$ with $\phi(w_0,w_1,\ldots,w_{p-1})=(w_0+w_1\zeta+\cdots+w_{p-1}\zeta^{p-1},w_0+w_1+\cdots+w_{p-1})$ is an isomorphism of $\Q$-vector spaces. Since $\phi$ is clearly a $\Q$-linear map between two $\Q$-vector spaces of dimension $p$, it suffices to show that $\ker(\phi)$ is trivial.  Let $\pr_1\colon\Q(\zeta)\times\Q\to\Q(\zeta)$ and $\pr_2\colon\Q(\zeta)\times\Q\to\Q$ be the projection maps.  Since $\{1,\zeta,\ldots,\zeta^{p-1}\}$ spans the $(p-1)$-dimensional $\Q$-space $\Q(\zeta)$ and has dependence relation $1+\zeta+\cdots+\zeta^{p-1}=0$, we know that $\pr_1\circ\phi$ is surjective, which makes $\ker(\pr_1\circ\phi)$ equal to the $1$-dimensional space $\Span_\Q\{(1,1,\ldots,1)\}$.  Then $\ker(\phi)$ is a subspace of $\ker(\pr_1\circ\phi)$, but $(\pr_2\circ\phi)(1,1,\ldots,1)\not=0$, so $\ker(\phi)$ must be trivial.
\end{proof}

Now we shall apply the previous result to obtain an archimedean bound on nondegenerate Weil sums. 
Recall that we let \(K\) be a finite field with characteristic \(p\) and order \(q = p^n\) and that \(s\) is a positive integer with \(\gcd(s,q-1) = 1\).
\begin{lemma}\label{Asher}
For any \(u \in K\), there exist unique \(w_0, \ldots, w_{p-1} \in \N\) with \(w_0 > 0\) such that \(\sum_{i=0}^{p-1} w_i = q\) and \(W_u = \sum_{i=0}^{p-1} w_i \zeta^i\).
If \(s\) is nondegenerate, then \(w_i < q\) for every \(i \in \{0,1, \ldots, p-1\}\) and \(|W_u| < q\).
\end{lemma}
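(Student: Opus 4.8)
The plan is to read the representation directly off the definition of the Weil sum and then invoke \cref{Gad} for uniqueness. For each $i \in \{0,1,\dots,p-1\}$, set $w_i = |\{x \in K : \Tr(x^s - ux) = i\}|$, where $i$ is regarded as an element of $\Fp$. Since $\psi(x^s-ux) = \zeta^{\Tr(x^s-ux)}$, grouping the $q$ summands of $W_u$ according to the value of $\Tr(x^s-ux)$ gives $W_u = \sum_{i=0}^{p-1} w_i \zeta^i$; the $w_i$ are nonnegative integers, and $\sum_i w_i = q$ because there are exactly $q$ summands. The term $x=0$ contributes $\Tr(0)=0$, so $w_0 \ge 1$, which secures $w_0 > 0$. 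For uniqueness I would apply \cref{Gad} with $t = q$: it guarantees a unique way to write $W_u$ as a $\Q$-linear combination of $1,\zeta,\dots,\zeta^{p-1}$ whose coefficients sum to $q$, so any tuple of integers summing to $q$ and representing $W_u$ must coincide with the one just constructed.

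For the second claim I would first reduce everything to the single condition $w_0 = q$. By the triangle inequality, $|W_u| \le \sum_{x \in K} |\psi(x^s-ux)| = q$, with equality precisely when all $q$ unit-modulus summands are equal; since the $x=0$ summand is $\psi(0)=1$, equality forces $\psi(x^s-ux)=1$, i.e.\ $\Tr(x^s-ux)=0$, for every $x$, which is exactly $w_0 = q$. Moreover, if any $w_i = q$ then all other $w_j$ vanish, so every $x$ satisfies $\Tr(x^s-ux)=i$, and evaluating at $x=0$ forces $i=0$. Hence the conditions ``some $w_i = q$'', ``$w_0=q$'', and ``$|W_u|=q$'' are all equivalent, and it suffices to show that when $s$ is nondegenerate none of them can hold, that is, that $\Tr(x^s)=\Tr(ux)$ for all $x \in K$ is impossible.

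The heart of the argument, and the step I expect to be the main obstacle, is showing that $\Tr(x^s)=\Tr(ux)$ for all $x \in K$ forces $s$ to be degenerate. I would compare the two sides as reduced polynomial functions on $K$, using that two polynomials of degree less than $q$ that agree on all of $K$ have identical coefficients, since their difference is a polynomial of degree less than $q$ vanishing at every one of the $q$ points. Writing $\Tr(ux)=\sum_{j=0}^{n-1} u^{p^j} x^{p^j}$ exhibits the right-hand side with exponents among the powers $p^0,\dots,p^{n-1}$, which are distinct and already reduced modulo $q-1$. On the left, $\Tr(x^s)=\sum_{j=0}^{n-1} x^{sp^j}$, and since $\gcd(sp^j,q-1)=1$ each exponent reduces to some $e_j \in \{1,\dots,q-2\}$ modulo $q-1$, so this too is a reduced polynomial of degree less than $q$. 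The key computation is that the exponent $e_0 \equiv s$ arises only from $j=0$: an equality $sp^j \equiv s \pmod{q-1}$ would force $p^j \equiv 1 \pmod{q-1}$, which for $0<j<n$ fails because $p$ has order $n$ modulo $q-1=p^n-1$. Thus $x^s$ occurs in the reduced form of $\Tr(x^s)$ with coefficient $1$, so matching coefficients forces $s$ to be congruent modulo $q-1$ to one of $p^0,\dots,p^{n-1}$, i.e.\ $s$ is degenerate. Taking the contrapositive, nondegeneracy of $s$ yields $w_0 < q$, whence every $w_i < q$ and, by the equality analysis of the preceding paragraph, $|W_u| < q$.
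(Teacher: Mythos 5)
Your proof is correct. The first half (constructing the $w_i$ as trace level counts, getting $\sum_i w_i = q$ and $w_0 > 0$ from the $x=0$ term, and invoking \cref{Gad} for uniqueness) is essentially identical to the paper's. The second half, however, takes a genuinely different route: the paper simply cites \cite[Theorem 2.1(f)]{Katz-2012} for the bound $|W_u| < q$ and deduces $w_i < q$ from it, whereas you prove everything from scratch. You first show, via the equality case of the triangle inequality and the fact that the $x=0$ summand equals $1$, that the conditions ``some $w_i = q$,'' ``$w_0 = q$,'' and ``$|W_u| = q$'' are all equivalent to $\Tr(x^s) = \Tr(ux)$ holding identically on $K$; you then rule this out for nondegenerate $s$ by comparing reduced polynomial representatives of degree less than $q$. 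That comparison is sound: since $\gcd(sp^j, q-1)=1$, the congruence $sp^j \equiv s \pmod{q-1}$ forces $p^j \equiv 1 \pmod{p^n-1}$, which fails for $0<j<n$, so the monomial $x^{e_0}$ with $e_0 \equiv s \pmod{q-1}$ survives with coefficient exactly $1$ in the reduced form of $\Tr(x^s)$; matching it against the exponents $p^0,\ldots,p^{n-1}$ of $\Tr(ux)$ yields $s \equiv p^j \pmod{q-1}$, i.e.\ degeneracy (and the argument covers $u=0$ automatically, since then the right-hand side is the zero polynomial and the coefficient $1$ still gives a contradiction). In effect you have reproved the cited theorem of \cite{Katz-2012} by the standard linearized-polynomial argument. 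What each approach buys: the paper's proof is two lines and leans on established literature; yours is self-contained and makes visible exactly where nondegeneracy enters --- the coefficient of $x^s$ in the reduced trace form cannot be absorbed by any $p$-power monomial. One cosmetic remark: your claim $e_j \in \{1,\ldots,q-2\}$ uses $q > 2$, which is harmless here since fields with at most four elements admit only degenerate exponents, so the nondegenerate case has $q \geq 5$.
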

\begin{proof}
By definition \eqref{Weil}, a Weil sum \(W_u\) is a sum of \(q\) terms from the set \(\{\zeta^0, \zeta^1, \ldots, \zeta^{p-1}\}\), so we can write \(W_u = \sum_{i=0}^{p-1} w_i \zeta^i\) for some \(w_0, \ldots, w_{p-1} \in \N\) such that \(\sum_{i=0}^{p-1} w_i = q\). 
The uniqueness of this representation follows from \cref{Gad}. 
Note that \(w_0 > 0\) because one term in \(W_u\) is \(\psi(0^s - u \cdot 0) = \zeta^0\).

When \(s\) is nondegenerate, \cite[Theorem 2.1(f)]{Katz-2012} tells us \(|W_u| < q\), which makes it impossible for \(w_i = q\) for any \(i\) (else \(|W_u| = |q \zeta^i| = q\)).
\end{proof}
The next two results explore $p$-adic bounds on Weil sums.
\begin{lemma}\label{Issachar}
For all \(u \in K\), we have \(v_p(W_u) > 0\). 
\end{lemma}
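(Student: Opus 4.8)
The plan is to exploit the fact that each term of $W_u$ is a $p$th root of unity, together with the non-archimedean estimate $v_p(\zeta - 1) = 1/(p-1) > 0$ recalled at the start of \cref{BOUNDS}. First I would write $W_u$ in the form given by \cref{Asher}, namely $W_u = \sum_{i=0}^{p-1} w_i \zeta^i$ with each $w_i \in \N$ and $\sum_{i=0}^{p-1} w_i = q$. The key observation is that every power $\zeta^i$ differs from $1$ by an element of positive valuation: for $1 \le i \le p-1$ we have the factorization $\zeta^i - 1 = (\zeta - 1)(1 + \zeta + \cdots + \zeta^{i-1})$, so that $v_p(\zeta^i - 1) \ge v_p(\zeta - 1) = 1/(p-1) > 0$ (in fact $\zeta^i$ is again a primitive $p$th root of unity, so equality holds, but the inequality is all I need).

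Next I would use the relation $\sum_{i=0}^{p-1} w_i = q$ to split off the constant part of each summand, obtaining
\[
W_u = \sum_{i=0}^{p-1} w_i \zeta^i = \sum_{i=0}^{p-1} w_i + \sum_{i=1}^{p-1} w_i(\zeta^i - 1) = q + \sum_{i=1}^{p-1} w_i(\zeta^i - 1).
\]
Here the leading term $q = p^n$ satisfies $v_p(q) = n \ge 1 > 0$. For each $i \ge 1$, multiplicativity of $v_p$ gives $v_p\bigl(w_i(\zeta^i - 1)\bigr) = v_p(w_i) + v_p(\zeta^i - 1) \ge 0 + 1/(p-1) > 0$, using that $w_i \in \N$ forces $v_p(w_i) \ge 0$ (a vanishing coefficient contributes a term of valuation $\infty$, which only helps).

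Finally I would apply the ultrametric inequality $v_p(a + b) \ge \min\{v_p(a), v_p(b)\}$ to the displayed decomposition, concluding that $v_p(W_u) \ge \min\{\,n,\ 1/(p-1)\,\} > 0$, which is exactly the claim. I do not expect a genuine obstacle here; the argument is essentially the statement that a sum of $q$ many $p$th roots of unity is congruent to $q$ modulo the prime above $p$. The only points demanding a little care are the facts about the valuation on $\Qz$ — namely $v_p(\zeta - 1) = 1/(p-1)$ together with multiplicativity and the ultrametric bound — all of which are recorded at the beginning of \cref{BOUNDS} and may be used without further comment.
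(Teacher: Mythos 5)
Your proof is correct. It is worth noting, though, that the paper does not actually prove \cref{Issachar} at all: it simply cites Theorem 2.1(e) of \cite{Katz-2012} (with Theorem 4.5 of \cite{Helleseth} as an earlier equivalent in crosscorrelation language). So your argument is not so much a different route as a self-contained one where the paper defers to the literature, and it is essentially the standard proof of the cited result: writing $W_u = q + \sum_{i=1}^{p-1} w_i(\zeta^i - 1)$ via \cref{Asher} and applying the ultrametric inequality is exactly the observation that a sum of $q$ many $p$th roots of unity is congruent to $q$ modulo the prime above $p$, giving the sharper quantitative bound $v_p(W_u) \geq 1/(p-1)$ rather than merely positivity. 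Two small points, neither a gap: your inequality $v_p(\zeta^i - 1) \geq v_p(\zeta - 1)$ tacitly uses that the cofactor $1 + \zeta + \cdots + \zeta^{i-1}$ is an algebraic integer and hence has nonnegative valuation (your parenthetical remark that $\zeta^i$ is again a primitive $p$th root of unity, so that equality holds, sidesteps this anyway); and the case $u = 0$, where $W_0 = 0$ has valuation $\infty$, is handled uniformly by your decomposition, since \cref{Asher} applies to all $u \in K$ and does not require $s$ to be nondegenerate for the parts you use.
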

\begin{proof}
This is \cite[Theorem 2.1(e)]{Katz-2012}.  For an equivalent version in terms of crosscorrelation, see \cite[Theorem 4.5]{Helleseth}.
\end{proof}

\begin{lemma}\label{Zebulun}
Suppose that \(s\) is nondegenerate.  
If \(u \in K\) and \(v_p(W_u) \geq v_p(q) = n\), then \(W_u = 0\). 
In particular, either \(v_p(W_u) < v_p(q)\) or else \(v_p(W_u) = \infty\).
\end{lemma}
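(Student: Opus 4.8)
The plan is to pass from $W_u$ to its norm $N = N_{\Qz/\Q}(W_u) = \prod_{\sigma \in \Gal(\Qz/\Q)} \sigma(W_u)$, a rational integer (since $W_u \in \Zz$), and to squeeze $N$ between an archimedean upper bound and a $p$-adic lower bound that together force it to vanish. Once $N = 0$ we immediately get $W_u = 0$, and the ``in particular'' clause follows because $v_p(0) = \infty$.

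First I would record the archimedean input. By \eqref{Rachel} the generator $\sigma$ of $\Gal(\Qz/\Q)$ satisfies $\sigma(W_u) = W_{\Fpgen^{1-1/s}u}$, so iterating shows that every Galois conjugate $\sigma^j(W_u)$ is itself a Weil sum $W_{u'}$ with $u' = \Fpgen^{j(1-1/s)}u \in K$. Since $s$ is nondegenerate, \cref{Asher} gives $|W_{u'}| < q$ for each of these $p-1$ conjugates, and hence $|N| < q^{p-1}$. Next I would supply the $p$-adic input. The prime $p$ is totally ramified in $\Qz$ (a single prime lies above it, with $v_p(\zeta-1) = 1/(p-1)$), so the extension of $v_p$ to $\Qz$ is the unique one and is therefore $\Gal(\Qz/\Q)$-invariant: $v_p(\sigma(\alpha)) = v_p(\alpha)$ for all $\alpha \in \Qz$. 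Multiplicativity of $v_p$ then gives $v_p(N) = \sum_{\sigma} v_p(\sigma(W_u)) = (p-1)\,v_p(W_u)$, so under the hypothesis $v_p(W_u) \ge n$ we obtain $v_p(N) \ge (p-1)n$.

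Finally I would combine the two estimates. The inequality $v_p(N) \ge (p-1)n$ says that the integer $N$ is divisible by $p^{(p-1)n} = q^{p-1}$, while $|N| < q^{p-1}$; the only integer meeting both constraints is $0$. Thus $N = 0$, and since the norm of a nonzero field element is nonzero, $W_u = 0$. This proves the main assertion, and the final sentence follows at once: if $v_p(W_u) \ge v_p(q) = n$ then $W_u = 0$, so $v_p(W_u) = \infty$.

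The step I expect to be the crux is the Galois-invariance of $v_p$, resting on the unique-prime (total ramification) property of $p$ in $\Qz$; this is precisely what amplifies the single $p$-adic inequality on $W_u$ into the full-strength $(p-1)n$ bound on $N$ that the archimedean estimate can contradict. The matching of $q^{p-1}$ on both sides is what makes the squeeze exact. Everything else is bookkeeping, and the argument degenerates gracefully when $p=2$: there $\Qz = \Q$, the norm is the identity, and the claim reduces to the observation that $2^n \mid W_u$ together with $|W_u| < 2^n = q$ forces $W_u = 0$.
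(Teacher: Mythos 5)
Your proof is correct, but it takes a genuinely different route from the paper's. The paper argues directly with the explicit representation from \cref{Asher}: writing $W_u=\sum_{0\leq i<p} w_i\zeta^i$ with integers $0\leq w_i<q$ summing to $q$, it observes that $q\mid W_u$ in $\Zz$ forces $W_u=qr$ with $r=\sum_i (w_i/q)\zeta^i$, and then the uniqueness of representations in the basis $1,\zeta,\ldots,\zeta^{p-2}$ (essentially \cref{Gad}) forces all the coefficients $w_i/q\in[0,1)$ to be equal, whence $r=0$ and $W_u=0$ --- a self-contained linear-algebra argument about a single Weil sum. You instead run a Kronecker-style norm squeeze: the rational integer $N=N_{\Qz/\Q}(W_u)$ satisfies $|N|<q^{p-1}$ because every Galois conjugate of $W_u$ is again a Weil sum $W_{u'}$ (by the relation $\sigma(W_u)=W_{\Fpgen^{1-1/s}u}$ recorded before \eqref{Rachel}) and hence subject to the archimedean bound of \cref{Asher}, while total ramification of $p$ in $\Qz$ makes $v_p$ Galois-invariant, so $v_p(N)=(p-1)v_p(W_u)\geq (p-1)n$ and $q^{p-1}\mid N$; together these force $N=0$ and thus $W_u=0$. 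All steps check out, including the two points where care is needed: the archimedean bound must hold for \emph{all} conjugates, which is exactly what the Galois action on Weil sums supplies, and the Galois-invariance of $v_p$ follows from uniqueness of the extension of $v_p$ to $\Qz$ (alternatively, one can bypass valuation theory by noting $W_u/q\in\Zz$ and applying $\sigma$ to get $q\mid\sigma(W_u)$ for every $\sigma$). What each approach buys: the paper's proof is more elementary, needing neither the Galois action nor ramification; yours is more conceptual and more portable --- it proves the general statement that an algebraic integer in $\Zz$ divisible by $q$ whose conjugates are all of absolute value less than $q$ must vanish, with the Weil-sum structure entering only to verify the hypothesis on the conjugates.
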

\begin{proof}
Let \(u \in K\) and use \cref{Asher} to write $W_u = \sum_{0 \leq i < p} w_i \zeta^i$, where $w_0,\ldots,w_{p-1}$ are nonnegative integers that are strictly less than $q$ with $\sum_{0 \leq i < p} w_i=q$. 
Suppose that \(v_p(W_u) \geq v_p(q) = n\).
Then \(q = p^n\) divides \(W_u\) in \(\Zz\), so that $W_u=q r$ for some $r \in \Z[\zeta]$, which we write as $\sum_{0 \leq i < p} r_i \zeta^i$, where each $r_i=w_i/q$ is a nonnegative rational number strictly less than $1$ with $\sum_{0 \leq i < p} r_i=1$.
Then we write $r$ as $\sum_{0 \leq i < p-1} (r_i-r_{p-1}) \zeta^i$, which is the unique $\Q$-linear combination of $1,\zeta,\ldots,\zeta^{p-2}$ equal to $r$, and since $r \in \Z[\zeta]$, the coefficients $r_i-r_{p-1}$ are all in $\Z$.
Since $0 \leq r_i < 1$ for every $i$, this forces $r_0=\cdots=r_{p-1}$, so that $r=0$, and then $W_u=0$.
\end{proof}

Recall from \cref{NUMBERS} that \(\Fpgen\) is a primitive element of the prime subfield \(\Fp\) and \(\sigma\) is the generator of \(\Gal(\Qz/\Q)\) that maps \(\zeta\) to \(\zeta^\Fpgen\). 
If \(p\equiv 1 \pmod{4}\), then it is well known from algebraic number theory that \(\Qz \supseteq \Q(\sqrt{p})\) and that \(\Qz/\Q(\sqrt{p})\) is an extension of degree \((p-1)/2\) with Galois group \(\ggen{\sigma^2}\).
The algebraic integers in \(\Q(\sqrt{p})\) are precisely elements of the form \((a + b\sqrt{p})/2\) with \(a,b \in \Z\) and \(a\equiv b \pmod{2}\).
We are interested in how one obtains such elements from Weil sums. 
To explore this, we use Gauss's determination of the quadratic Gauss sum when \(p\equiv 1 \pmod{4}\) (see \cite[Theorem 5.15]{Lidl-Niederreiter}):
\begin{equation}\label{Natasha}
\sum_{i\in\Fpu} \eta(i) \zeta^i=\sqrt{p},
\end{equation}
where \(\eta\) is the quadratic character (Legendre symbol) of \(\Fpu\).

\begin{lemma}\label{Joseph}
Suppose that \(p \equiv 1 \pmod{4}\).
An expression of the form \(\sum_{i\in\Fp} w_i \zeta^i\) with rational coefficients \(w_i\) lies in \(\Q(\sqrt{p})\) if and only if, for every \(i,j \in \Fp\), we have \(w_i = w_j\) when \(\eta(i) = \eta(j)\).
In this case, if we write \(w_+\) for the common value of the \(w_i\)'s with \(\eta(i) = +1\) and \(w_-\) for the common value of the \(w_i\)'s with \(\eta(i) = -1\), then our sum becomes
\[
\left(w_0 - \frac{w_+ + w_-}{2}\right) + \left(\frac{w_+ - w_-}{2}\right) \sqrt{p}.
\]
\end{lemma}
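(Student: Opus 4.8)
The plan is to run everything through the Galois correspondence, reducing the problem to the action of $\sigma^2$ on the coefficient vector, and then to secure the forward implication using the uniqueness in \cref{Gad}. First I would record the basic dictionary: from the facts recalled just before the lemma, $\Qz/\Q(\sqrt p)$ has Galois group $\ggen{\sigma^2}$, so for $v\in\Qz$ we have $v\in\Q(\sqrt p)$ if and only if $\sigma^2(v)=v$. Since $\sigma(\zeta)=\zeta^\Fpgen$, we get $\sigma^2(\zeta^i)=\zeta^{\Fpgen^2 i}$ for every $i\in\Fp$, so applying $\sigma^2$ to $\sum_{i\in\Fp}w_i\zeta^i$ merely permutes the exponents by multiplication by $\Fpgen^2$. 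Because $\Fpgen^2$ generates the group of squares in $\Fpu$, multiplication by $\Fpgen^2$ on $\Fp$ has exactly three orbits: the singleton $\{0\}$, the nonzero squares, and the nonsquares, and these last two are precisely the classes on which $\eta$ is constant. Thus the condition in the statement is exactly that the coefficient vector $(w_i)$ is constant on each orbit of this permutation.

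For the forward direction I would suppose $v=\sum_{i\in\Fp}w_i\zeta^i$ lies in $\Q(\sqrt p)$ and set $t=\sum_{i\in\Fp}w_i$. Applying $\sigma^2$ and reindexing by $i\mapsto\Fpgen^{-2}i$ gives $v=\sigma^2(v)=\sum_{i\in\Fp}w_{\Fpgen^{-2}i}\zeta^i$. The new coefficients are a permutation of the old ones, hence also sum to $t$, so $\sum_i w_i\zeta^i$ and $\sum_i w_{\Fpgen^{-2}i}\zeta^i$ are two $\Q$-linear combinations of $1,\zeta,\ldots,\zeta^{p-1}$ representing the same value $v$ with the same coefficient sum $t$. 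By the uniqueness asserted in \cref{Gad} they must coincide termwise, so $w_{\Fpgen^{-2}i}=w_i$ for every $i$. Hence $(w_i)$ is invariant under multiplication of the index by $\Fpgen^2$, so it is constant on the three orbits above, which is the asserted condition.

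For the reverse direction and the explicit formula I would argue directly. Assuming $(w_i)$ is constant on each $\eta$-class, write $w_+,w_-$ for its values on the squares and nonsquares and put $S_{\pm}=\sum_{\eta(i)=\pm1}\zeta^i$. The relation $1+\zeta+\cdots+\zeta^{p-1}=0$ gives $S_++S_-=-1$, while the Gauss sum evaluation \eqref{Natasha} gives $S_+-S_-=\sqrt p$, so $S_{\pm}=(-1\pm\sqrt p)/2$. Substituting into $v=w_0+w_+S_++w_-S_-$ and separating the rational and $\sqrt p$ parts yields
\[
v=\left(w_0-\frac{w_++w_-}{2}\right)+\left(\frac{w_+-w_-}{2}\right)\sqrt p,
\]
which visibly lies in $\Q(\sqrt p)$ and is exactly the closed form claimed.

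The only genuinely delicate point, which I expect to be the crux, is that the coefficients $w_i$ are not determined by $v$: they are pinned down only up to adding a common constant, since $1+\zeta+\cdots+\zeta^{p-1}=0$ is the sole relation. A priori it is therefore unclear that a condition on the $w_i$ can be equivalent to a condition on $v$. The resolution is precisely the device of fixing the coefficient sum $t$, after which \cref{Gad} restores uniqueness and lets the $\sigma^2$-invariance of $v$ pass cleanly to $\Fpgen^2$-invariance of the coefficient vector; I would also remark that adding a common constant to all $w_i$ preserves both $v$ and the stated condition, so the equivalence is well posed regardless of the chosen representation.
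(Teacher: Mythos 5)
Your proposal is correct and follows essentially the same route as the paper's proof: the Galois correspondence reduces membership in \(\Q(\sqrt{p})\) to \(\sigma^2\)-invariance, the uniqueness statement of \cref{Gad} (with the coefficient sum fixed, since reindexing by \(i \mapsto \Fpgen^{-2} i\) permutes the coefficients) transfers that invariance to the coefficient vector, and the closed form follows from \(1+\zeta+\cdots+\zeta^{p-1}=0\) together with the Gauss sum evaluation \eqref{Natasha}. Your closing remark about the coefficients being determined only up to a common additive constant, and the fixed sum \(t\) restoring uniqueness, is exactly the role \cref{Gad} plays in the paper's argument.
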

\begin{proof}
Since \(\Gal(\Qz/\Q(\sqrt{p})) = \ggen{\sigma^2}\), we know that \(A=\sum_{i\in\Fp} w_i \zeta^i \in \Q(\sqrt{p})\) if and only if it is fixed by $\sigma^2$, that is, if and only if
\[
\sum_{i \in \Fp} w_i \zeta^i = \sum_{i \in \Fp} w_i \zeta^{i \Fpgen^2} = \sum_{i\in\Fp} w_{\Fpgen^{-2} i} \zeta^i,
\]
and then \cref{Gad} tells us that this happens if and only if $w_i=w_{\Fpgen^{-2} i}$ for every $i\in \Fp$, which is true if and only if $w_i=w_j$ whenever $j \in i \ggen{\Fpgen^2}$, i.e., whenever $\eta(i)=\eta(j)$.
In this case, write \(w_+\) and \(w_-\) as in the statement of this lemma, and then our sum becomes
\begin{align*}
A
& = w_0 + w_+ \sum_{i\in\ggen{\Fpgen^2}} \zeta^i + w_- \sum_{i\in\Fpu\smallsetminus\ggen{\Fpgen^2}} \zeta^i \\
& = w_0 + \left(\frac{w_++w_-}{2}\right) \sum_{i\in\Fpu} \zeta^i + \left(\frac{w_+-w_-}{2}\right) \sum_{i\in\Fpu} \eta(i) \zeta^i, 
\end{align*}
where the penultimate summation is clearly $-1$ and the ultimate one is the quadratic Gauss sum \eqref{Natasha}.
\end{proof}
We now apply the previous result to Weil sums.
\begin{lemma}\label{Benjamin}
Let \(p\) be a prime with \(p\equiv 1 \pmod{4}\) and suppose that \(s\) is an  invertible exponent over \(K\).
Any Weil sum \(\weil_u\) in \(\Q(\sqrt{p})\) can be written uniquely in the form \((I+J \sqrt{p})/2\), where \(I,J\in\Z\).  Furthermore, \(I \equiv J \pmod{2}\) and \(v_p(I) \geq 1\). 
If \(s\) is nondegenerate, then \(-q < -2 (q-1)/(p-1) < I < 2 q\) and \(|J| \leq 2 (q-1)/(p-1) < q\). 
\end{lemma}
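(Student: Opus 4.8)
The plan is to read off $I$ and $J$ directly from the explicit representation of $W_u$ supplied by the earlier lemmas. First I would invoke \cref{Asher} to write $W_u = \sum_{i=0}^{p-1} w_i \zeta^i$ with each $w_i \in \N$, $w_0 > 0$, and $\sum_i w_i = q$. Since we are assuming $W_u \in \Q(\sqrt{p})$, \cref{Joseph} applies and shows that $w_i$ depends only on $\eta(i)$; writing $w_+$ and $w_-$ for the common values of the coefficients attached to quadratic residues and nonresidues respectively, that lemma rewrites $W_u$ as $\left(w_0 - \tfrac{w_+ + w_-}{2}\right) + \left(\tfrac{w_+ - w_-}{2}\right)\sqrt{p}$. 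Setting $I = 2 w_0 - (w_+ + w_-)$ and $J = w_+ - w_-$ then gives $W_u = (I + J\sqrt{p})/2$ with $I,J \in \Z$, and uniqueness is immediate because $\{1,\sqrt{p}\}$ is a $\Q$-basis of $\Q(\sqrt{p})$. The congruence $I \equiv J \pmod{2}$ drops out of $I - J = 2(w_0 - w_+)$.

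For the divisibility claim $v_p(I) \geq 1$, I would use \cref{Issachar}, which gives $v_p(W_u) > 0$. The key observation is that $v_p(\sqrt{p}) = 1/2$, since $2 v_p(\sqrt{p}) = v_p(p) = 1$; hence $v_p(I)$ is an integer while $v_p(J\sqrt{p}) = v_p(J) + 1/2$ is a proper half-integer whenever $J \neq 0$. Because $v_p(2) = 0$, we have $v_p(W_u) = v_p(I + J\sqrt{p})$, and when $I,J \neq 0$ the two valuations differ, so the equality case of the nonarchimedean inequality yields $v_p(I + J\sqrt{p}) = \min\{v_p(I), v_p(J) + 1/2\} > 0$, forcing $v_p(I) > 0$ and hence $v_p(I) \geq 1$ as $v_p(I) \in \Z$. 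The degenerate cases $J = 0$ (where $W_u = I/2$, so $v_p(I) = v_p(W_u) > 0$) and $I = 0$ (where $v_p(I) = \infty$) are handled separately and trivially.

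Finally, for the bounds in the nondegenerate case I would combine $\sum_i w_i = q$, which reads $w_0 + \tfrac{p-1}{2}(w_+ + w_-) = q$, with the facts from \cref{Asher} that $w_0 \geq 1$ and every $w_i < q$. The identity gives $w_+ + w_- = (q - w_0)/\tfrac{p-1}{2} \leq 2(q-1)/(p-1)$, which immediately yields $|J| = |w_+ - w_-| \leq w_+ + w_- \leq 2(q-1)/(p-1)$; since $p \equiv 1 \pmod 4$ forces $p \geq 5$, one checks $2(q-1)/(p-1) < q$. For $I = 2 w_0 - (w_+ + w_-)$, the upper bound $I \leq 2 w_0 < 2q$ is clear from $w_0 < q$, and the lower bound follows from $I \geq 2 - (w_+ + w_-) > -2(q-1)/(p-1) > -q$, using $w_0 \geq 1$ together with the bound on $w_+ + w_-$.

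The only genuinely delicate point—and it is a mild one—is the $p$-adic step: one must use that $\sqrt{p}$ has half-integer valuation so that the integer-valued $v_p(I)$ and the half-integer-valued $v_p(J\sqrt{p})$ can never coincide, which is exactly what activates the equality case of the ultrametric inequality and pins down $v_p(I)$. Everything else reduces to bookkeeping on the nonnegative integers $w_0, w_+, w_-$.
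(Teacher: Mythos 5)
Your proposal is correct and follows essentially the same route as the paper's proof: Lemmas \ref{Asher} and \ref{Joseph} to extract $I = 2w_0 - (w_+ + w_-)$ and $J = w_+ - w_-$, a $p$-adic valuation argument based on \cref{Issachar} for $v_p(I) \geq 1$, and the same bookkeeping on $w_0, w_+, w_-$ for the archimedean bounds. The only cosmetic differences are that you derive $I \equiv J \pmod{2}$ directly from $I - J = 2(w_0 - w_+)$ where the paper cites the structure of the ring of integers of $\Q(\sqrt{p})$, and you invoke the equality case of the ultrametric inequality (splitting off the cases $I=0$ and $J=0$) where the paper more simply views $I$ as $2W_u - J\sqrt{p}$, a sum of two terms of strictly positive valuation.
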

\begin{proof}
From Lemmas \ref{Asher} and \ref{Joseph}, it follows that any Weil sum in \(Q(\sqrt{p})\) can be written as 
\[
\left(\frac{2 w_0 - (w_+ + w_-)}{2}\right) + \left(\frac{w_+ - w_-}{2}\right) \sqrt{p},
\]
where \(w_0, w_+, w_- \in \Z\).  Thus, if we let \(I = 2 w_0 - (w_+ + w_-)\) and \(J = w_+ - w_-\), then \(I,J \in \Z\) and our Weil sum is $(I+J\sqrt{p})/2$; since $\{1,\sqrt{p}\}$ is $\Q$-linearly independent, the $I$ and $J$ are uniquely determined.
Since $J \in \Z$, we know that \(v_p(J\sqrt{p})\) has strictly positive $p$-adic valuation, as does the entire Weil sum (by \cref{Issachar}), and so \(v_p(I)\) must be a strictly positive integer.
Note also that \(I \equiv J \pmod{2}\) since, as we stated in the paragraph before \cref{Joseph}, algebraic integers in \(\Q(\sqrt{p})\) are of the form \((a+b \sqrt{p})/2\) where \(a, b \in \Z\) and \(a \equiv b \pmod{2}\).

From now on, let us suppose that \(s\) is nondegenerate.
Then by \cref{Asher}, we know that \(w_0, w_+, w_-\) are all nonnegative integers that are strictly less than \(q\) with \(w_0 \geq 1\) and \(w_0 + (w_+ + w_-)(p-1)/2=q\).
Thus, 
\[
|J| \leq w_+ + w_- \leq 2 \left(\frac{q-1}{p-1}\right),
\]
and since \(w_0 < q\), we know that \(w_++w_- > 0\), so
\[
-2 \left(\frac{q-1}{p-1}\right) < 2 - 2 \left(\frac{q-1}{p-1}\right) \leq I < 2 w_0 + (w_+ + w_-)(p-1) = 2 q,
\]
where since \(p-1 > 2\), we have \(2(q-1)/(p-1) < q-1 < q\).

\end{proof}

\section{Algebraic sets over finite fields} \label{SETS}

In this section, we study a certain type of algebraic set over the finite field \(K\). 
It turns out that these sets are closely related to sums of products of Weil sum values (as we shall see in \cref{ALGEBRAS}), and thus will help us prove our main result (\cref{MainTheorem}).

Recall that \(K\) is a finite field of characteristic \(p\) and order \(q = p^n\) and that \(s\) is a positive integer such that \(\gcd(s,q-1) =  1\).
First, we introduce two notations that enable us to express our algebraic sets very compactly.
\begin{notation}\label{Oganesson}
If $k \in \Z_+$ and $u=(u_1,\ldots,u_k), v=(v_1,\ldots,v_k) \in K^k$ then $u \cdot v$ denotes $u_1 v_1+\cdots+u_k v_k$ and $\ferm{u}$ denotes $(u_1^s+\cdots+u_k^s)^{1/s}$, so that $\ferms{u}=u_1^s+\cdots+u_k^s$.
\end{notation}

\begin{notation}\label{Tennessine}
For \(k \in \Z_+\), \(t=(t_1, \ldots, t_k) \in \Kuk\), and \(a,b\in K\), we use \(Q^t_{a,b}\) to denote the number of solutions \(v = (v_1,\ldots, v_k) \in K^k\) to the system of equations
\begin{align*}
t\cdot v & = a \\
\ferm{v} & = b.
\end{align*}
\end{notation}
The next four results relate various values of $Q^t_{a,b}$ with each other.
\begin{lemma}\label{Livermorium}
For any \(k \in \Z_+\), any \(t \in \Kuk\), and any \(b \in K\) we have  \(\sum_{a \in K} Q^t_{a,b} = \sum_{a \in K} Q^t_{b,a} = q^{k-1}\).
\end{lemma}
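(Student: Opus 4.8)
The plan is to observe that each of the two sums marginalizes out exactly one of the two constraints defining $Q^t_{a,b}$, collapsing a two-condition count into a one-condition count that is easy to evaluate.

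First I would treat $\sum_{a \in K} Q^t_{a,b}$. For fixed $b$, as $a$ ranges over all of $K$, the linear constraint $t \cdot v = a$ imposes no restriction: every $v \in K^k$ contributes to $Q^t_{a,b}$ for exactly one value of $a$, namely $a = t\cdot v$. Hence $\sum_{a \in K} Q^t_{a,b}$ simply counts all $v \in K^k$ satisfying $\ferm{v}=b$, equivalently $\ferms{v}=b^s$, i.e.\ $v_1^s + \cdots + v_k^s = b^s$. Here I would invoke the standing hypothesis $\gcd(s,q-1)=1$, which makes $x \mapsto x^s$ a permutation of $K$; consequently the substitution $w_i = v_i^s$ is a bijection of $K^k$ onto itself, and the count equals the number of $w \in K^k$ with $w_1 + \cdots + w_k = b^s$. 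That number is $q^{k-1}$, since $w_1,\ldots,w_{k-1}$ may be chosen freely and $w_k$ is then determined.

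For $\sum_{a \in K} Q^t_{b,a}$ the roles of the two constraints are interchanged. Now the fixed condition is $t \cdot v = b$, and summing over $a$ marginalizes the norm condition $\ferm{v}=a$, since each $v$ has exactly one value $a = \ferm{v}$. Thus $\sum_{a \in K} Q^t_{b,a}$ counts all $v \in K^k$ with $t\cdot v = b$. Because $t \in \Kuk$ has every coordinate nonzero, this is a single nondegenerate $K$-linear equation in $k$ unknowns, whose solution set is an affine hyperplane of cardinality $q^{k-1}$. Both sums therefore equal $q^{k-1}$, as claimed.

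There is no substantive obstacle here; this is a routine marginalization argument. The only points requiring care are correctly identifying which of the two defining equations is eliminated by each summation, and, for the first sum, explicitly using the bijectivity of the power map $x\mapsto x^s$ so that the Fermat-type condition $\ferms{v}=b^s$ becomes the ordinary linear condition $\sum_i w_i = b^s$.
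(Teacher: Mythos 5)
Your proof is correct and follows essentially the same route as the paper: the paper likewise observes that $\sum_{a \in K} Q^t_{b,a}$ counts the hyperplane $t \cdot v = b$, while $\sum_{a \in K} Q^t_{a,b}$ counts the solutions of $\ferms{v} = b^s$, which has the same cardinality $q^{k-1}$ because $x \mapsto x^s$ permutes $K$. Your write-up merely spells out the marginalization and the substitution $w_i = v_i^s$ in more detail than the paper's one-line proof.
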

\begin{proof}
The second summation counts the points in the hyperplane $t \cdot v = b$ in $K^k$, while the first sum counts points with $\ferms{v} = b^s$, which has the same cardinality because $x\mapsto x^s$ is a permutation of $K$.
\end{proof}

\begin{lemma}\label{Moscovium}
For \(k \in \Z_+\), any \(u \in \Ku\), any \(t \in \Kuk\), and any \(a, b \in K\), we have  \(Q^{u t}_{a,b} = Q^t_{a/u,b}\) and \(Q^t_{u a, u b} = Q^t_{a,b}\).
\end{lemma}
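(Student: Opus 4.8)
The plan is to observe that the first identity holds essentially by definition, because the two systems of equations being counted have literally the same solution set, while the second identity follows from an explicit scaling bijection of $K^k$.

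For the first identity, I would note that for any $v \in K^k$ we have $(ut)\cdot v = u(t\cdot v)$. Hence, since $u \in \Ku$, the equation $(ut)\cdot v = a$ is equivalent to $t\cdot v = a/u$, whereas the equation $\ferm{v} = b$ is unchanged. Thus the set of $v \in K^k$ counted by $Q^{ut}_{a,b}$ (see \cref{Tennessine}) is exactly the set counted by $Q^t_{a/u,b}$, so the two quantities agree.

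For the second identity, I would introduce the scaling map $\mu_u \colon K^k \to K^k$ given by $\mu_u(v) = uv = (uv_1,\ldots,uv_k)$; since $u \in \Ku$, this is a bijection of $K^k$. I would then verify that $\mu_u$ carries the solution set of the system $t\cdot v = a$, $\ferm{v}=b$ onto the solution set of the system $t\cdot v = ua$, $\ferm{v}=ub$. The linear equation transforms correctly because $t\cdot(uv) = u(t\cdot v)$, so $t\cdot v = a$ becomes $t\cdot(uv) = ua$. Because $\mu_u$ is a bijection and restricts to a bijection between the two solution sets, this yields $Q^t_{a,b} = Q^t_{ua,ub}$.

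The only point requiring a word of care—and the sole place where the exponent $s$ enters—is how $\ferm{\cdot}$ transforms under $\mu_u$. Here I would pass through the $s$th power rather than $\ferm{\cdot}$ itself: since $x \mapsto x^s$ permutes $K$, the equation $\ferm{v}=b$ is equivalent to $\ferms{v}=b^s$, and then $\ferms{uv} = \sum_{i=1}^{k}(uv_i)^s = u^s \sum_{i=1}^{k} v_i^s = u^s\,\ferms{v}$ by commutativity of $K$. Consequently $\ferms{v}=b^s$ becomes $\ferms{uv} = u^s b^s = (ub)^s$, i.e.\ $\ferm{uv}=ub$, as needed. There is no genuine obstacle; the only subtlety is remembering to distribute the scaling factor $u^s$ across the sum at the level of $\ferms{\cdot}$ before returning to $\ferm{\cdot}$.
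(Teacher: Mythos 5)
Your proposal is correct and matches the paper's proof in essentials: the first identity via \((ut)\cdot v = u(t\cdot v)\), and the second via the scaling bijection of \(K^k\) (the paper uses \(v \mapsto v/u\), the inverse of your \(\mu_u\), which is the same argument). Your explicit verification that \(\ferm{uv} = u\,\ferm{v}\) by passing through \(\ferms{\cdot}\) is a detail the paper leaves implicit, and it is handled correctly.
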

\begin{proof}
The first equality follows from observing that $u t \cdot v = a$ if and only if $t \cdot v = a/u$.
The second follows from the bijection \((v_1, \ldots, v_k) \mapsto (v_1/u, \ldots, v_k/u)\) from the set of points counted by $Q^t_{u a, u b}$ to that counted by $Q^t_{a,b}$.
\end{proof}

\begin{lemma}\label{Flerovium}
Let \(k \in \Z_+\) and \(t \in \Kuk\).
For any \(a \in \Ku\), we have
\begin{equation}\label{chair}
Q^t_{a,0} = Q^t_{0,a} = \frac{q^{k-1} - Q^t_{0,0}}{q-1}.
\end{equation}
Moreover, if \(b \in K\), we have 
\begin{equation}\label{table}
\sum_{a \in \Ku} Q^t_{a,b} = \sum_{a \in \Ku} Q^t_{b,a} 
= \begin{cases}
q^{k-1} - Q^t_{0,0} & \text{ if } b=0 \\[5pt]
\frac{q^k - 2 q^{k-1} + Q^t_{0,0}}{q-1} & \text{ if } b \neq 0.
\end{cases}
\end{equation}
\end{lemma}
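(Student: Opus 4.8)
The plan is to derive everything from the two symmetry/counting facts already established, namely the scaling relation $Q^t_{ua,ub} = Q^t_{a,b}$ of \cref{Moscovium} and the row/column sum identity $\sum_{a \in K} Q^t_{a,b} = \sum_{a \in K} Q^t_{b,a} = q^{k-1}$ of \cref{Livermorium}. No genuinely new geometric input should be needed; the whole argument is a bookkeeping exercise built on these.

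First I would prove \eqref{chair}. The key observation is that the quantities $Q^t_{a,0}$ and $Q^t_{0,a}$ are \emph{constant} as $a$ ranges over $\Ku$. Indeed, for $a, a' \in \Ku$, applying $Q^t_{ua,ub} = Q^t_{a,b}$ with $u = a'/a$ (and $b = 0$) gives $Q^t_{a',0} = Q^t_{a,0}$, and similarly the choice $a = 0$ gives $Q^t_{0,a'} = Q^t_{0,a}$. Call these common values $C$ and $D$ respectively. Now I would isolate the $a = 0$ term in the two sums of \cref{Livermorium} taken at $b = 0$:
\[
Q^t_{0,0} + (q-1)C = \sum_{a \in K} Q^t_{a,0} = q^{k-1}, \qquad Q^t_{0,0} + (q-1)D = \sum_{a \in K} Q^t_{0,a} = q^{k-1}.
\]
Solving each yields $C = D = (q^{k-1} - Q^t_{0,0})/(q-1)$, which is exactly \eqref{chair}.

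Next I would establish \eqref{table}. The case $b = 0$ is immediate, since the two sums there are $(q-1)C$ and $(q-1)D$, which both equal $q^{k-1} - Q^t_{0,0}$ by the previous paragraph. For $b \in \Ku$, I again split off the single term missing from a full row or column sum. From \cref{Livermorium} we have $Q^t_{0,b} + \sum_{a \in \Ku} Q^t_{a,b} = q^{k-1}$, and since $b \neq 0$ the quantity $Q^t_{0,b}$ equals $D = (q^{k-1} - Q^t_{0,0})/(q-1)$ by \eqref{chair}. Substituting and placing over a common denominator gives
\[
\sum_{a \in \Ku} Q^t_{a,b} = q^{k-1} - \frac{q^{k-1} - Q^t_{0,0}}{q-1} = \frac{q^k - 2q^{k-1} + Q^t_{0,0}}{q-1}.
\]
The identical computation with $Q^t_{b,0} = C$ in place of $Q^t_{0,b}$ handles the column sum $\sum_{a \in \Ku} Q^t_{b,a}$.

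Honestly, there is no serious obstacle here: the proof is a short sequence of substitutions, and the only thing requiring any care is the final arithmetic simplification in the $b \neq 0$ case, where one must correctly clear the denominator $q-1$ and track the cancellation $(q-1)q^{k-1} - q^{k-1} = q^k - 2q^{k-1}$. The one conceptual point worth stating explicitly is the constancy of $Q^t_{a,0}$ and $Q^t_{0,a}$ on $\Ku$, since that is what reduces the two full sums of \cref{Livermorium} to single linear equations in the unknowns $C$ and $D$.
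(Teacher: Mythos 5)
Your proof is correct and follows essentially the same route as the paper's: constancy of $Q^t_{a,0}$ and $Q^t_{0,a}$ on $\Ku$ via \cref{Moscovium}, then solving the row/column sums of \cref{Livermorium}, with the $b\neq 0$ case obtained by subtracting the single term supplied by \eqref{chair}. The only difference is that you spell out the substitutions the paper leaves implicit.
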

\begin{proof}
\cref{Moscovium} shows that $Q^t_{a,0}$ (resp., $Q^t_{0,a}$) has the same value for every $a \in \Ku$, so \eqref{chair} and the $b=0$ case of \eqref{table} follow from \cref{Livermorium}.  
The $b\not=0$ case of \eqref{table} then similarly follows from \cref{Livermorium}, using \eqref{chair}.
\end{proof}
\begin{lemma}\label{Nihonium}
For any \(k \in \Z_+\), any \(b, t_1, \ldots, t_k \in \Ku\), and any \(a \in K\), we have 
\[
Q^{(t_1,\ldots,t_k)}_{a,b}=\frac{Q^{(a/b,t_1,\ldots,t_k)}_{0,0} - Q^{(t_1,\ldots,t_k)}_{0,0}}{q-1}.
\]
\end{lemma}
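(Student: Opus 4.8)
The plan is to read the term $Q^{(a/b,t_1,\ldots,t_k)}_{0,0}$ in the numerator as a point count in $K^{k+1}$ and to partition it according to the value of the extra coordinate. Put $t=(t_1,\ldots,t_k)\in\Kuk$. By \cref{Tennessine}, $Q^{(a/b,t_1,\ldots,t_k)}_{0,0}$ counts the tuples $(v_0,v_1,\ldots,v_k)\in K^{k+1}$ with $(a/b)v_0+t_1v_1+\cdots+t_kv_k=0$ and $v_0^s+v_1^s+\cdots+v_k^s=0$. Writing $v'=(v_1,\ldots,v_k)$ and grouping the solutions by the value $c=v_0\in K$, I would count, for each fixed $c$, the number of admissible $v'$, and then sum over the $q$ possible values of $c$.

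For $c=0$ the two equations reduce to $t\cdot v'=0$ and $\ferms{v'}=0$, so the number of admissible $v'$ is exactly $Q^{t}_{0,0}$. For $c\in\Ku$ the equations become $t\cdot v'=-(a/b)c$ and $\ferms{v'}=-c^s$. The key observation is that the unique $s$th root of $-c^s$ is $-c$, i.e.\ $(-c)^s=-c^s$: this is immediate when $p=2$ (where $-1=1$), and when $p$ is odd it holds because $q-1$ is even, so the hypothesis $\gcd(s,q-1)=1$ forces $s$ to be odd. Hence the normed equation $\ferms{v'}=-c^s$ is equivalent to $\ferm{v'}=-c$, and the number of admissible $v'$ equals $Q^{t}_{-(a/b)c,\,-c}$.

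It then remains to show that this count is independent of $c$ and equals $Q^{t}_{a,b}$. For this I would invoke the scaling relation $Q^{t}_{ua,ub}=Q^{t}_{a,b}$ from \cref{Moscovium} with the scalar $u=-c/b\in\Ku$: then $ub=-c$ and $ua=-(a/b)c$, so $Q^{t}_{-(a/b)c,\,-c}=Q^{t}_{a,b}$. Choosing $u$ via the normed coordinate (solving $ub=-c$, which needs only $b\in\Ku$) rather than via the linear coordinate keeps this step valid even when $a=0$, so no separate case is needed. Summing the contributions over all $c\in K$ gives
\[
Q^{(a/b,t_1,\ldots,t_k)}_{0,0}=Q^{t}_{0,0}+(q-1)\,Q^{t}_{a,b},
\]
and solving for $Q^{t}_{a,b}$ yields the stated formula.

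I expect the only genuinely delicate point to be the sign/root identity $(-c)^s=-c^s$, since this is exactly what lets the normed constraint $\ferms{v'}=-c^s$ be rewritten in the $\ferm{\cdot}=\cdot$ form demanded by \cref{Tennessine} and then matched to the scaling in \cref{Moscovium}; the remainder is the routine bookkeeping of splitting a count over a single coordinate.
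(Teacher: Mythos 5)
Your proposal is correct and takes essentially the same approach as the paper: the paper computes $Q^{(a/b,t_1,\ldots,t_k)}_{0,0}-Q^{(t_1,\ldots,t_k)}_{0,0}$ as the count of solutions with $v_0\neq 0$ and applies the single reparameterization $u_j=-bv_j/v_0$ (using that invertibility of $s$ forces $(-1)^s=-1$), which is precisely your fiberwise identification of the count at $v_0=c$ as $Q^{t}_{-(a/b)c,\,-c}$ composed with the scaling of \cref{Moscovium} at $u=-c/b$. You merely factor the paper's one bijection into two steps, so the content is identical.
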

\begin{proof}
For the rest of this proof, let $t=(t_1,\ldots,t_k)$ and $t'=(a/b,t_1,\ldots,t_k)$, and let $u$ and $v'$ be shorthand for $(u_1,\ldots,u_k)$ and $(v_0,v_1,\ldots,v_k)$, respectively.    Then
\begin{align*}
Q^{t'}_{0,0} - Q^t_{0,0}
& = |\{v' \in K^{k+1}: v_0\not=0, t'\cdot v'=0, \ferm{v'} = 0\}| \\
& = |\{(v_0,u) \in K^\times \times K^k: t\cdot u=a, \ferm{u} = b\}| \\
& = (q-1) Q^t_{a,b},
\end{align*}
where the second equality uses the reparameterization with $u_j=-b v_j/v_0$ for $j \in \{1,\ldots,k\}$ and the fact that the invertibility of $s$ makes $(-1)^s=-1$.
\end{proof}
Now we compute certain values of $Q^t_{a,b}$ that will be useful later.
\begin{lemma}\label{Copernicium}
Let \(t_1, t_2 \in \Ku\) and let $\delta$ denote the Kronecker delta.
\begin{enumerate}[label=(\roman*)]
\item We have $Q^{(t_1)}_{a,b} = \delta_{a, t_1 b}$.
\item If at least one of $a$ or $b$ is zero, then 
\[
Q^{(t_1,t_2)}_{a,b} = \begin{cases}
1+(q-1) \delta_{t_1,t_2} & \text{if $a=b=0$,} \\
1-\delta_{t_1,t_2} & \text{otherwise.}
\end{cases}
\]
\end{enumerate}
\end{lemma}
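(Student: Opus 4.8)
The plan is to compute each count directly from \cref{Tennessine} by reducing the defining system to an equation in a single variable, using two facts throughout: that $x \mapsto x^s$ is a permutation of $K$ (so $w^s = c$ has exactly one solution $w$ for each $c \in K$, and $w_1^s = w_2^s$ forces $w_1 = w_2$), and that the invertibility of $s$ makes $(-1)^s = -1$, exactly as in the proof of \cref{Nihonium}. For part (i), with $k=1$ the condition $\ferm{(v_1)} = b$ is simply $v_1 = b$, since $\ferm{(v_1)} = (v_1^s)^{1/s} = v_1$. The linear equation $t_1 v_1 = a$ then reads $t_1 b = a$, which holds for the single value $v_1 = b$ when $a = t_1 b$ and fails otherwise, giving $Q^{(t_1)}_{a,b} = \delta_{a, t_1 b}$.

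For part (ii), with $k=2$ the system is $t_1 v_1 + t_2 v_2 = a$ and $v_1^s + v_2^s = b^s$. When $a = 0$, I would eliminate $v_1$ via the linear equation as $v_1 = -(t_2/t_1) v_2$ (legitimate since $t_1 \in \Ku$) and substitute into the power-sum equation; using $(-1)^s = -1$ this collapses to $\bigl(1 - (t_2/t_1)^s\bigr) v_2^s = b^s$. Because $x \mapsto x^s$ is a bijection, the coefficient $1 - (t_2/t_1)^s$ vanishes precisely when $t_1 = t_2$. Hence for $t_1 \neq t_2$ there is a unique $v_2$, giving one solution of the system; for $t_1 = t_2$ the equation becomes $0 = b^s$, which yields all $q$ choices of $v_2$ when $b = 0$ and no solution when $b \neq 0$. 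When instead $b = 0$ and $a \neq 0$, I would use the power-sum equation first: $v_1^s = -v_2^s = (-v_2)^s$ forces $v_1 = -v_2$ by injectivity of $x \mapsto x^s$, and the linear equation becomes $(t_2 - t_1) v_2 = a$, which has one solution if $t_1 \neq t_2$ and none if $t_1 = t_2$. Matching these outcomes against $1 + (q-1)\delta_{t_1,t_2}$ (for $a=b=0$) and $1 - \delta_{t_1,t_2}$ (otherwise) gives the stated formula.

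The computation is entirely elementary, so I expect no genuine obstacle. The only point requiring care is the bookkeeping: correctly clearing signs with the identity $(-1)^s = -1$, and recognizing that the single pivotal dichotomy governing every sub-case is $t_1 = t_2$ versus $t_1 \neq t_2$, which is equivalent to $(t_2/t_1)^s = 1$ versus $(t_2/t_1)^s \neq 1$ precisely because $x \mapsto x^s$ is a bijection of $K$.
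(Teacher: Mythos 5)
Your proof is correct, and part (i) is argued exactly as in the paper (the condition $\ferm{(v_1)}=b$ collapses to $v_1=b$, so the count is $\delta_{a,t_1b}$). For part (ii), however, you take a genuinely different route. The paper does no direct counting at all: it applies \cref{Nihonium} with $k=1$ to get $Q^{(t_1,t_2)}_{0,0}=(q-1)Q^{(t_2)}_{t_1,1}+Q^{(t_2)}_{0,0}$, evaluates the right-hand side via part (i) to obtain $1+(q-1)\delta_{t_1,t_2}$, and then reads off the mixed cases $Q^{(t_1,t_2)}_{a,0}$ and $Q^{(t_1,t_2)}_{0,a}$ from the averaging identity \eqref{chair} in \cref{Flerovium}. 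You instead eliminate a variable and solve the resulting one-variable equation, which requires the sign identity $(-1)^s=-1$ and the bijectivity of $x\mapsto x^s$ at each step; I checked all your sub-cases ($a=0$ via $v_1=-(t_2/t_1)v_2$, giving $(1-(t_2/t_1)^s)v_2^s=b^s$; and $b=0$, $a\neq 0$ via $v_1=-v_2$, giving $(t_2-t_1)v_2=a$) and they match the stated formula, including the equivalence $(t_2/t_1)^s=1\Leftrightarrow t_1=t_2$, which is where bijectivity of the power map is genuinely needed. What each approach buys: yours is self-contained and makes the governing dichotomy $t_1=t_2$ completely transparent at the level of the equations themselves, at the cost of explicit case bookkeeping; the paper's derivation is shorter, avoids all sign manipulation, and exercises the general-$k$ reduction (\cref{Nihonium}) and averaging (\cref{Flerovium}) machinery that recurs later, so the two small lemmas do the combinatorial work once rather than per case.
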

\begin{proof}
The first claim is clear because $Q^{(t_1)}_{a,b}$ counts the number of $v_1 \in K$ such that $t_1 v_1 = a$ and $(v_1^s)^{1/s}=b$.  
Applying this result to the fact that $Q^{(t_1,t_2)}_{0,0} = (q-1) Q^{(t_2)}_{t_1,1}+Q^{(t_2)}_{0,0}$ by \cref{Nihonium} gives the expression in the first case of the second claim, and then the second case follows from using \cref{Flerovium} to deduce the value of $Q^{(t_1,t_2)}_{a,0}$ and $Q^{(t_1,t_2)}_{0,a}$.
\end{proof}
We explore certain special values of $Q^t_{a,b}$ that are critical for our proof of \cref{MainTheorem}.
\begin{lemma} \label{Roentgenium}
Let \(w \in K\). 
\begin{enumerate}[label=(\roman*)]
\item We have \(Q^{(1,-1)}_{1,w} = Q^{(1,-1)}_{1, -w}\).
\item If \(p\) is odd, then \(Q^{(1,-1)}_{1,1} - 1 = Q^{(1,-1)}_{1,-1} - 1 = Q^{(1,1)}_{1,-1}\).
\item When \(p\) is odd and \(w = 2^{1/s -1}\), then \(Q^{(1,1)}_{1,w}\) is odd; otherwise \(Q^{(1,1)}_{1,w}\) is even.
\end{enumerate}
\end{lemma}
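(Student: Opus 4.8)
The plan is to treat all three parts by exhibiting explicit bijections or fixed-point-free involutions on the underlying solution sets (\cref{Tennessine}), with the only genuine computation confined to part (ii). For part (i), I would unwind the definitions: $Q^{(1,-1)}_{1,w}$ counts the pairs $(v_1,v_2) \in K^2$ with $v_1 - v_2 = 1$ and $v_1^s + v_2^s = w^s$, while $Q^{(1,-1)}_{1,-w}$ counts those with $v_1 - v_2 = 1$ and $v_1^s + v_2^s = (-w)^s = -w^s$ (using $(-1)^s = -1$, which holds because $s$ is invertible). The involution $(v_1,v_2) \mapsto (-v_2,-v_1)$ preserves the linear condition $v_1 - v_2 = 1$ and negates the power sum $v_1^s + v_2^s$, so it carries the first solution set bijectively onto the second, yielding the claimed equality with no hypothesis on the parity of $p$.

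For part (ii), the first equality $Q^{(1,-1)}_{1,1} = Q^{(1,-1)}_{1,-1}$ is part (i) with $w = 1$, so the content lies in the second equality. Here I would invoke \cref{Nihonium} to rewrite both quantities as three-variable counts,
\[
Q^{(1,-1)}_{1,1} = \frac{Q^{(1,1,-1)}_{0,0} - Q^{(1,-1)}_{0,0}}{q-1}
\qquad\text{and}\qquad
Q^{(1,1)}_{1,-1} = \frac{Q^{(-1,1,1)}_{0,0} - Q^{(1,1)}_{0,0}}{q-1},
\]
whose hypotheses (the entry $b$ and the $t_i$ nonzero) hold since $p$ is odd. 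Because $Q^t_{0,0}$ is unchanged by permuting the entries of $t$ (relabeling the coordinates of $v$ is a bijection respecting both defining conditions), the two leading terms coincide: $Q^{(1,1,-1)}_{0,0} = Q^{(-1,1,1)}_{0,0}$. The remaining two-variable terms are $Q^{(1,-1)}_{0,0} = 1$ and $Q^{(1,1)}_{0,0} = q$ by \cref{Copernicium}(ii). Subtracting the two displayed expressions then gives $Q^{(1,-1)}_{1,1} - Q^{(1,1)}_{1,-1} = (q-1)/(q-1) = 1$, which is exactly the desired identity.

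For part (iii), I would observe that $Q^{(1,1)}_{1,w}$ counts the pairs $(v_1,v_2)$ with $v_1 + v_2 = 1$ and $v_1^s + v_2^s = w^s$, and that the coordinate swap $(v_1,v_2) \mapsto (v_2,v_1)$ is an involution on this set; hence $Q^{(1,1)}_{1,w}$ is congruent modulo $2$ to its number of fixed points. A fixed point forces $v_1 = v_2$, and together with $v_1 + v_2 = 1$ and $p$ odd this singles out the unique candidate $v_1 = v_2 = 1/2$, which is an actual solution precisely when $w^s = 2 \cdot (1/2)^s = 2^{1-s}$, that is, when $w = 2^{1/s-1}$. Thus there is exactly one fixed point when $w = 2^{1/s-1}$ and none otherwise, making the count odd in the former case and even in the latter. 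I expect part (ii) to be the only delicate step: parts (i) and (iii) are routine involution arguments, whereas part (ii) requires applying \cref{Nihonium} correctly (tracking the $a/b$ entry and the nonvanishing hypotheses) and recognizing the permutation-invariance of $Q^t_{0,0}$ that makes the leading three-variable terms cancel.
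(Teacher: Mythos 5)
Your proposal is correct, and parts (i) and (iii) are essentially identical to the paper's proof: the same sign-flipping bijection $(v_1,v_2)\mapsto(-v_2,-v_1)$ for (i), and the same swap-involution fixed-point count for (iii), including the correct identification of the unique fixed point $v_1=v_2=1/2$ and the condition $w=2^{1/s-1}$ (and your argument silently handles $p=2$ correctly, since $2v_1=1$ has no solution there). Part (ii) is where you genuinely diverge. The paper argues directly: it notes the defining equations of $Q^{(1,1)}_{1,-1}$ preclude $x_2=0$ in odd characteristic, reparameterizes via $x_2=-1/y$ with $y\in\Ku$, and identifies the resulting count with the number of $y\in\Ku$ satisfying $(y+1)^s+y^s=1$, which is $Q^{(1,-1)}_{1,1}-1$ because $y=0$ is the one solution lost in restricting to $\Ku$. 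You instead lift both two-variable counts to three-variable counts via \cref{Nihonium}, getting $Q^{(1,-1)}_{1,1}=\bigl(Q^{(1,1,-1)}_{0,0}-Q^{(1,-1)}_{0,0}\bigr)/(q-1)$ and $Q^{(1,1)}_{1,-1}=\bigl(Q^{(-1,1,1)}_{0,0}-Q^{(1,1)}_{0,0}\bigr)/(q-1)$, observe that $Q^t_{0,0}$ is invariant under permuting the entries of $t$ (so the three-variable terms cancel), and finish with the evaluations $Q^{(1,-1)}_{0,0}=1$ and $Q^{(1,1)}_{0,0}=q$ from \cref{Copernicium}; your bookkeeping, including the hypotheses of \cref{Nihonium} and the use of $p$ odd to get $\delta_{1,-1}=0$, all checks out, and the subtraction indeed yields the difference $1$. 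The paper's route is shorter and self-contained, and makes transparent exactly where the $-1$ comes from (the single solution with $y=0$); your route is more systematic, avoids the ad hoc projective reparameterization, and exposes a reusable principle -- any two counts $Q^t_{a,b}$ whose augmented tuples $(a/b,t)$ are permutations of one another differ only by the explicitly computable two-variable terms -- at the modest cost of invoking two prior lemmas rather than none.
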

\begin{proof}
Recall that \((-1)^s = -1\) since \(s\) is invertible.

The first result follows from the observation that \((x_1,x_2) \in K^2\) satisfies the system of equations corresponding to \(Q^{(1,-1)}_{1,w}\) if and only if \((-x_2,-x_1)\) satisfies the system of equations corresponding to \(Q^{(1,-1)}_{1, -w}\). 

Now $Q^{(1,1)}_{1,-1}$ counts how many $(x_1, x_2) \in K^2$ satisfy $x_1 + x_2 = 1$ and $x_1^s + x_2^s = (-1)^s$, and since these equations preclude $x_2=0$ in odd characteristic, we can reparameterize with $x_2=-1/y$ for $y \in \Ku$ and eliminate $x_1$ to see that $Q^{(1,1)}_{1,-1}$ is the same as the number of $y \in \Ku$ such that $(y+1)^s+y^s=1$, which is $Q^{(1,-1)}_{1,1}-1$ because $(0+1)^s+0^s=1$.

For the third result, note that the system of equations that corresponds to \(Q^{(1,1)}_{1,w}\) is symmetric in both unknowns, so \((x_1, x_2)\) satisfies this system if and only if \((x_2, x_1)\) does. 
This implies that \(Q^{(1,1)}_{1,w}\) is even except when there is some \(x \in K\) such that \(2 x = 1\) and \(2^{1/s} x = w\), which happens exactly when \(p\) is odd, \(x = 1/2\), and \(w = 2^{1/s - 1}\).
\end{proof}

\section{Group algebra} \label{ALGEBRAS}

In this section, we use a group algebra that gives us a convenient way to encapsulate all the Weil spectrum values in a single object; this builds upon the methods of Feng \cite{Feng} and developments in \cite{Katz-2015}.
After introducing the relevant group algebra here, we define the key group algebra elements of interest in \cref{HERBERT} and demonstrate their relation to the cardinalities of algebraic sets studied in \cref{SETS}. 
Then we present other related group algebra elements designed to have a particular symmetry in \cref{TOR}, and focus on a particularly important case of this symmetry in \cref{PHILIPPE}. 

Let \(L = \Q(\zeta,\xi)\), where \(\xi = \exp(2\pi i/(q-1))\), and consider the group \(L\)-algebra \(\gr\), whose elements are of the form \(S = \sum_{u \in \Ku} S_u [u]\), where \(S_u \in L\) for each \(u \in \Ku\).
We write the elements of \(\Ku\) in brackets to distinguish them from similar-appearing elements in \(L\).
We identify any subset \(U\) of \(\Ku\) with \(\sum_{u \in U} [u]\) in \(\gr\). 
For \(S = \sum_{u \in \Ku} S_u [u] \in \gr\), we define its conjugate to be \(\conj{S} = \sum_{u \in \Ku} \conj{S_u} [u^{-1}]\). 
We also let \(|S| = \sum_{u \in \Ku} S_u\); this is the cardinality of \(S\) if \(S\) is a group algebra element representing a subset of 
\(\Ku\).
Moreover, if \(t \in \Z\), we write \(S^{(t)}\) to denote \(\sum_{u \in \Ku} S_u [u^t]\).

Below, we record some easily proved observations. 
\begin{lemma}\label{Hydrogen}
For any \(S, T \in \gr\) and any \(t \in \Z\), we have 
\begin{enumerate}[label = (\roman*)]
\item \(|S^{(t)}| = |S|\);
\item \(|\conj{S}| = \conj{|S|}\);
\item \(|S + T| = |S| + |T|\);
\item\label{dace} \(|S T| = |S| |T|\);
\item \(S \Ku = |S| \Ku\);
\item if \(S\) is a subgroup of \(\Ku\), then \(\conj{S} = S^{(-1)} = S\) and \(S^2 = |S| S\); and
\item\label{gar} \((S \conj{S})_1 = \sum_{u \in \Ku} |S_u|^2\).
\end{enumerate}
\end{lemma}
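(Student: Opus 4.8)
The plan is to recognize that the map $S \mapsto |S|$ is nothing but the augmentation homomorphism of the group algebra $\gr$, which organizes most of the claims, and then to handle the remaining structural identities using the fact that group translation permutes $\Ku$ and its subgroups. Since each element of $\gr$ is an $L$-linear combination of the basis $\{[u] : u \in \Ku\}$, it suffices throughout to manipulate coefficients directly.

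For parts (i)--(iv) I would work straight from the definition $|S| = \sum_{u\in\Ku} S_u$. Part (iii) is immediate since the coefficients of $S+T$ are $S_u + T_u$. For part (i), note that $S^{(t)} = \sum_u S_u[u^t]$ carries exactly the same collection of coefficients $S_u$ as $S$, merely re-indexed onto the elements $u^t$ (with possible collapsing that does not alter the total sum), so $|S^{(t)}| = \sum_u S_u = |S|$. Part (ii) follows from $|\conj S| = \sum_u \conj{S_u} = \conj{\sum_u S_u} = \conj{|S|}$, using additivity of complex conjugation. For part (iv), expand $ST = \sum_{u,w} S_u T_w [uw]$ and sum all coefficients to get $|ST| = \sum_{u,w} S_u T_w = \bigl(\sum_u S_u\bigr)\bigl(\sum_w T_w\bigr) = |S||T|$. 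Together, (iii) and (iv) assert precisely that $|\cdot|$ is an $L$-algebra homomorphism.

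For the structural claims (v) and (vi), the key tool is that for fixed $u \in \Ku$ the translation $w \mapsto uw$ is a bijection of $\Ku$, and restricts to a bijection of any subgroup containing $u$. Thus $[u]\,\Ku = \sum_{w\in\Ku}[uw] = \Ku$, and summing over $u$ with weights $S_u$ yields $S\,\Ku = |S|\,\Ku$, proving (v). For (vi), writing $S = \sum_{h\in H}[h]$ for a subgroup $H$, closure under inversion gives $\conj S = \sum_h [h^{-1}] = S = S^{(-1)}$, while closure under multiplication gives $S^2 = \sum_{h\in H}\sum_{h'\in H}[hh'] = \sum_{h\in H} S = |H|\,S = |S|\,S$. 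Finally, for (vii), I would read off the identity-coefficient of $S\conj S = \sum_{u,w} S_u \conj{S_w}[uw^{-1}]$: the basis element $[uw^{-1}]$ equals $[1]$ exactly when $u=w$, so $(S\conj S)_1 = \sum_u S_u \conj{S_u} = \sum_u |S_u|^2$.

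Since all seven parts are formal manipulations, there is no genuine obstacle here; the only points deserving a moment of care are ensuring in (i) and (iv) that collapsing of repeated group elements leaves the coefficient-sum unchanged, and invoking in (v)--(vi) the bijectivity of translation on the relevant (sub)group.
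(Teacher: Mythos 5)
Your proof is correct, and since the paper records this lemma as a collection of ``easily proved observations'' without supplying any proof, your direct coefficient manipulations are precisely the routine verifications the authors intend. All seven parts check out, including the two points you rightly flag: the coefficient sum is unaffected by any collapsing of basis elements under $u\mapsto u^t$ in parts (i) and (iv), and parts (v)--(vi) rest on translation being a bijection of $\Ku$ (or of the subgroup $H$).
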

Let \(\mchars\) denote the group of multiplicative characters from \(\Ku\) to \(\Lu\).
The identity element of \(\mchars\) is called the {\it principal character} and is written \(\chi_0\); it maps every element of \(\Ku\) to \(1\).
We define the application of a multiplicative character \(\chi\in\mchars\) to a group algebra element \(S = \sum_{u \in \Ku} S_u [u] \in \gr\) by linear extension:
\[
\chi(S) = \sum_{u \in \Ku} S_u \chi(u),
\]
and we call \(\chi(S)\) {\it the Fourier coefficient of \(S\) at \(\chi\)}.

The following facts, which we record without proof, are easy to verify. 
\begin{lemma}\label{Helium}
The following facts hold for any \(S, T \in \gr\) and any \(\chi \in \mchars\): 
\begin{enumerate}[label = (\roman*)]
\item \(\chi_0(S) = |S|\),
\item \(\chi(S^{(t)}) = \chi^t(S)\) for any \(t \in \Z\),
\item \(\chi(\conj{S}) = \conj{\chi(S)}\),
\item \(\chi(S+T) = \chi(S)+\chi(T)\), and 
\item \(\chi(S T) = \chi(S) \chi(T)\).
\end{enumerate}
\end{lemma}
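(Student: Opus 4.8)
The plan is to verify each of the five items directly from the definition \(\chi(S) = \sum_{u \in \Ku} S_u \chi(u)\), using only that \(\chi \colon \Ku \to \Lu\) is a group homomorphism and that the Fourier coefficient map \(S \mapsto \chi(S)\) is its \(L\)-linear extension to \(\gr\). With this viewpoint, every assertion reduces to pushing the homomorphism or linearity of \(\chi\) through the defining formula.

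Items (i) and (iv) I would dispatch immediately: the principal character contributes a factor of \(1\) at every \(u\), giving \(\chi_0(S) = \sum_u S_u = |S|\), while additivity of \(\chi\) is just the \(L\)-linearity of the Fourier coefficient. For (ii), I would substitute \(S^{(t)} = \sum_u S_u [u^t]\) into the definition and use that \(\chi\) is a homomorphism to replace \(\chi(u^t)\) by \(\chi(u)^t = \chi^t(u)\); no injectivity of \(u \mapsto u^t\) is needed, since applying \(\chi\) to the formal sum is insensitive to how the monomials \([u^t]\) are collected.

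The two items carrying any content are (iii) and (v). For (iii), applying \(\chi\) to \(\conj{S} = \sum_u \conj{S_u}[u^{-1}]\) yields \(\sum_u \conj{S_u}\,\chi(u)^{-1}\); the one arithmetic input is that each \(\chi(u)\) is a root of unity (as \(u\) has finite order in \(\Ku\) and its image lies among the \((q-1)\)-th roots of unity in \(L\)), so complex conjugation coincides with inversion, \(\conj{\chi(u)} = \chi(u)^{-1}\), and the sum becomes \(\conj{\chi(S)}\). For (v), I would expand the convolution product \(ST = \sum_w \big(\sum_{uv=w} S_u T_v\big)[w]\), apply \(\chi\), and factor using \(\chi(uv) = \chi(u)\chi(v)\) to recover \(\chi(S)\chi(T)\). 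None of this presents a genuine obstacle; the only point demanding care is the root-of-unity observation underlying (iii), which is why the lemma can reasonably be recorded without proof.
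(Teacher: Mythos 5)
Your verification is correct, and since the paper records this lemma explicitly without proof (``which we record without proof, are easy to verify''), your direct computation from the definition \(\chi(S)=\sum_{u\in\Ku}S_u\chi(u)\) is precisely the routine argument the authors intend, including the one substantive observation that each \(\chi(u)\) is a \((q-1)\)-th root of unity so that \(\conj{\chi(u)}=\chi(u)^{-1}\) in item (iii). Your remark on (ii) is also well taken: linearity of the Fourier coefficient makes the computation insensitive to how the monomials \([u^t]\) are collected, so no injectivity of \(u\mapsto u^t\) is needed.
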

The next lemma follows from Theorem 5.4 of \cite{Lidl-Niederreiter}.
\begin{lemma}\label{Lithium}
  We have 
  \[
  \sum_{u \in \Ku} \chi(u) = \chi(\Ku) = \begin{cases}
    q-1 & \text{ if } \chi = \chi_0 \\
    0 & \text{ otherwise}.
  \end{cases}
  \]
\end{lemma}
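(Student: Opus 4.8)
The plan is to prove this standard orthogonality relation directly. The first equality is purely notational: under the identification of the subset $\Ku$ with the group algebra element $\sum_{u \in \Ku} [u]$, the Fourier coefficient $\chi(\Ku)$ equals $\sum_{u \in \Ku} \chi(u)$ by linear extension. So the content lies in evaluating the character sum $S = \sum_{u \in \Ku} \chi(u)$.

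When $\chi = \chi_0$, the evaluation is immediate, since the principal character sends every element of $\Ku$ to $1$ and hence $S = |\Ku| = q - 1$. For $\chi \neq \chi_0$, I would exploit the translation-invariance of the sum. Because $\chi$ is nonprincipal, there is some $g \in \Ku$ with $\chi(g) \neq 1$. Multiplying $S$ by $\chi(g)$ and using that $\chi$ is a homomorphism together with the fact that $u \mapsto g u$ permutes $\Ku$, I obtain
\[
\chi(g) \, S = \sum_{u \in \Ku} \chi(g) \chi(u) = \sum_{u \in \Ku} \chi(g u) = \sum_{w \in \Ku} \chi(w) = S.
\]
Thus $(\chi(g) - 1) S = 0$, and since $\chi(g) \neq 1$ and $L$ is a field, it follows that $S = 0$.

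The result is classical, so there is essentially no obstacle. The only step worth noting is the existence of an element $g$ with $\chi(g) \neq 1$ whenever $\chi \neq \chi_0$, which is immediate from the characterization of the principal character as the unique character mapping all of $\Ku$ to $1$.
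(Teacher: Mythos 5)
Your proof is correct. Note that the paper does not actually prove this lemma: it simply cites Theorem 5.4 of Lidl and Niederreiter, so there is no internal argument to compare against. Your translation-invariance argument (multiply the sum by $\chi(g)$ for some $g$ with $\chi(g)\neq 1$, reindex via the permutation $u \mapsto gu$ of $\Ku$, and conclude $(\chi(g)-1)S=0$, hence $S=0$ since $L$ is a field) is precisely the classical proof underlying that citation, and your handling of the two easy points --- the notational identification $\chi(\Ku)=\sum_{u\in\Ku}\chi(u)$ and the existence of $g$ with $\chi(g)\neq 1$ when $\chi\neq\chi_0$ --- is accurate. The only thing your write-up buys over the paper's treatment is self-containment.
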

The next result says that a group algebra element is determined by its Fourier transform.
\begin{lemma}\label{Beryllium}
If \(S, T \in \gr\), then \(S=T\) if and only if \(\chi(S) = \chi(T)\) for all \(\chi \in \mchars\).
\end{lemma}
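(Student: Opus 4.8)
The forward implication is immediate, since each $\chi \in \mchars$ is a well-defined function on $\gr$, so I would dispatch it in one line. For the converse, the plan is first to reduce to a single vanishing statement: by the additivity recorded in \cref{Helium}, it suffices to show that the only $S \in \gr$ with $\chi(S) = 0$ for all $\chi \in \mchars$ is $S = 0$, and then to apply this to $S - T$. The substance of the argument is then a Fourier inversion formula that recovers each coefficient $S_u$ from the collection $\{\chi(S)\}_{\chi \in \mchars}$.

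The key structural input I would use is that $\Ku$ is cyclic of order $q-1$ and that $L$ contains the primitive $(q-1)$-th root of unity $\xi$, so that $\mchars$ also has order $q-1$; concretely, fixing a generator $g$ of $\Ku$, the characters are exactly the maps $g^a \mapsto \xi^{j a}$ for $j \in \Z/(q-1)\Z$. The first real step is to establish the dual orthogonality relation $\sum_{\chi \in \mchars} \chi(w) = q-1$ if $w = 1$ and $0$ otherwise. I would obtain this by applying \cref{Lithium} to the dual group: for fixed $w \in \Ku$, the map $\chi \mapsto \chi(w)$ is itself a character of the finite abelian group $\mchars$, and it is the principal character of $\mchars$ exactly when $\chi(w) = 1$ for all $\chi$, which (since $\xi$ has exact order $q-1$) forces $w = 1$. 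Equivalently, in the coordinates above the sum becomes a geometric sum $\sum_j \xi^{j a}$ that vanishes unless $a \equiv 0 \pmod{q-1}$.

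With this relation in hand, the second step is the routine computation
\[
\frac{1}{q-1} \sum_{\chi \in \mchars} \chi(S)\, \chi(u^{-1}) = \frac{1}{q-1} \sum_{v \in \Ku} S_v \sum_{\chi \in \mchars} \chi(v u^{-1}) = S_u,
\]
where the last equality uses dual orthogonality. Thus $\chi(S) = 0$ for all $\chi$ forces $S_u = 0$ for every $u \in \Ku$, which finishes the proof. I expect the main obstacle to be the dual orthogonality relation, which is exactly the assertion that the character matrix $(\chi(u))_{\chi, u}$ is invertible; the crucial hypothesis making this work is that $L$ was chosen to contain a full set of $(q-1)$-th roots of unity, guaranteeing $|\mchars| = q-1$ and hence a square, nonsingular transform.
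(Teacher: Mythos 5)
Your proposal is correct and is essentially the paper's own argument: the paper proves \cref{Beryllium} by asserting that the Fourier transform $S \mapsto \widehat{S}$ is an isomorphism with explicit inverse $\widecheck{R}_u = \frac{1}{|\Ku|}\sum_{\chi \in \mchars} R(\chi)\,\conj{\chi(u)}$, and your computation (with $\chi(u^{-1}) = \conj{\chi(u)}$ since character values are roots of unity) is exactly a verification of that inversion formula. The only difference is that you spell out the dual orthogonality relation, correctly using that $L$ contains the primitive $(q-1)$-th root of unity $\xi$ so that $|\mchars| = q-1$, whereas the paper leaves this standard fact implicit.
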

\begin{proof}
This follows from the fact that the Fourier transform (the map from \(\gr\) to \(L^{\mchars}\) that takes \(S\) to the function \(\widehat{S}\colon \mchars \to L\) with \(\widehat{S}(\chi)=\chi(S)\)) is an isomorphism of \(L\)-algebras with the inverse map 
\begin{align*}
L^{\mchars}	& \to \gr \\
R  	& \mapsto \widecheck{R} = \sum_{u \in \Ku} \widecheck{R}_u [u],
\end{align*}
where 
\[
\widecheck{R}_u = \frac{1}{|\Ku|} \sum_{\chi \in \mchars} R(\chi) \conj{\chi(u)}. \qedhere
\]
\end{proof}

\subsection{Weil sums in the group algebra}\label{HERBERT}

Recall that \(\psi : K \to \Q(\zeta)\) is the canonical additive character of \(K\). 
We define
\[
\Psi = \sum_{u \in \Ku} \psi(u) [u]
\]
and
\begin{equation}\label{celery}
\weil = \sum_{u \in \Ku} \weil_u [u],
\end{equation}
and when the field \(K\) and the exponent \(s\) are clear from context, we simply write \(W = \sum_{u \in \Ku} W_u [u]\).
We now relate $W$ to $\Psi$.
\begin{lemma} \label{Boron}
We have \(W = \Psi \conj{\Psi^{(1/s)}} + \Ku\).
\end{lemma}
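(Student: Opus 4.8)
The plan is to prove the identity by comparing the coefficient of each group element $[u]$ on both sides, since two elements of $\gr$ are equal precisely when all of their coefficients in $L$ agree. Because the right-hand side adds $\Ku = \sum_{u \in \Ku}[u]$, which contributes $1$ to the coefficient of every $[u]$, it suffices (in view of \eqref{celery}) to show that the coefficient of $[u]$ in $\Psi\conj{\Psi^{(1/s)}}$ equals $W_u - 1 = \sum_{x \in \Ku}\psi(x^s - ux)$, namely the Weil sum $\weil_u$ with its $x=0$ term (which contributes $\psi(0)=1$) removed.

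First I would put $\conj{\Psi^{(1/s)}}$ into standard form. Reindexing $\Psi^{(1/s)} = \sum_{u \in \Ku}\psi(u)[u^{1/s}]$ by $v = u^{1/s}$ gives $\Psi^{(1/s)} = \sum_{v \in \Ku}\psi(v^s)[v]$. Applying conjugation and using $\conj{\psi(y)} = \psi(-y)$ (which holds because $\psi(y)=\zeta^{\Tr(y)}$ and complex conjugation sends $\zeta^m$ to $\zeta^{-m}$), then reindexing by $w = v^{-1}$, I would obtain $\conj{\Psi^{(1/s)}} = \sum_{w \in \Ku}\psi(-w^{-s})[w]$. Multiplying by $\Psi = \sum_{a \in \Ku}\psi(a)[a]$ and collecting the coefficient of $[u]$ (by writing $w = u a^{-1}$ so that $[aw]=[u]$) yields the coefficient $\sum_{a \in \Ku}\psi(a)\psi(-u^{-s}a^s) = \sum_{a \in \Ku}\psi(a - u^{-s}a^s)$.

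The decisive step is then the substitution $a = -ux$, which is a bijection of $\Ku$ onto itself. Here $a^s = (-1)^s u^s x^s = -u^s x^s$, because the invertibility of $s$ forces $(-1)^s = -1$, so that $a - u^{-s}a^s = -ux + x^s = x^s - ux$. This turns the coefficient into $\sum_{x \in \Ku}\psi(x^s - ux) = W_u - 1$, as required, and adding $\Ku$ restores the coefficient to $W_u$, matching \eqref{celery}.

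I expect the main obstacle to be bookkeeping rather than any genuine difficulty: keeping the three reindexings (by $1/s$, by inversion, and by $a=-ux$) straight, and above all getting the sign right in the final substitution, where it is essential to use $a=-ux$ and to invoke $(-1)^s=-1$. Using $a = ux$ instead would produce $\conj{W_u}-1$ rather than $W_u-1$; although these coincide by the reality of Weil sums (\cref{Real}), the substitution $a=-ux$ delivers the result directly without appealing to that theorem.
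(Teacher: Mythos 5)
Your proof is correct and is essentially the same argument as the paper's: the paper performs the single reparameterization $z=-x^s$, $y=-ux$ in the double sum $\Psi\conj{\Psi^{(1/s)}}=\sum_{y,z\in\Ku}\psi(y)\conj{\psi(z)}[yz^{-1/s}]$, which is exactly the change of variables you carry out in stages (reindexing by $1/s$, by inversion, and finally by $a=-ux$ with $(-1)^s=-1$), arriving at the same identity $\Psi\conj{\Psi^{(1/s)}}=\sum_{u\in\Ku}(W_u-1)[u]$. Your coefficient-wise bookkeeping is just a more explicit presentation of the paper's one-line computation, and your sign analysis matches theirs.
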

\begin{proof}
Applying the reparameterization \(z = -x^s, y = -u x\) to \(\Psi \conj{\Psi^{(1/s)}} = \sum_{y, z \in \Ku} \psi(y) \conj{\psi(z)} [y z^{-1/s}] \) gives 
\begin{align*}
\Psi \conj{\Psi^{(1/s)}} = \sum_{u, x \in \Ku} \psi(x^s) \psi(-u x) [u] = \sum_{u \in \Ku} (W_u - 1) [u],
\end{align*}
from which the result follows.
\end{proof}

Let \(\chi \in \mchars\).
Then the \textit{Gauss sum} \(G(\chi)\) is given by \(\sum_{u \in \Ku} \psi(u) \chi(u)\). 
Note that \(G(\chi) = \chi(\Psi)\).
We list some useful facts about Gauss sums, which will be useful later when we calculate the Fourier transform of group algebra elements that generalize $W$.
\begin{lemma}\label{Carbon}
Let \(\chi_0\) be the principal character and \(\chi \in \mchars\). 
Then 
\begin{enumerate}[label=(\roman*)]
\item $|\Psi| = \chi_0(\Psi) = G(\chi_0)=-1$,
\item $|\chi(\Psi)| = |G(\chi)|=\sqrt{q}$ for $\chi\not=\chi_0$, and
\item $\conj{G(\chi)}=\chi(-1) G(\conj{\chi})$.
\end{enumerate} 
\end{lemma}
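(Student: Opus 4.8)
The plan is to handle the three parts separately, in each case reducing to the definition $G(\chi)=\chi(\Psi)$ together with two basic facts: that $\conj{\psi(x)}=\psi(-x)$ (since $\psi$ takes values among roots of unity) and the orthogonality relation of \cref{Lithium}. For part (i), I would first observe that $G(\chi_0)=\chi_0(\Psi)=|\Psi|$, using the definition of the Gauss sum together with \cref{Helium}. Then $|\Psi|=\sum_{u\in\Ku}\psi(u)$, and since $\sum_{x\in K}\psi(x)=0$ (as noted just after \eqref{William}) while $\psi(0)=\zeta^0=1$, removing the $u=0$ term gives $|\Psi|=-1$.

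For part (ii), since $\chi(\Psi)=G(\chi)$ it suffices to show $|G(\chi)|^2=q$ for $\chi\neq\chi_0$. I would expand $G(\chi)\conj{G(\chi)}$ as a double sum over $(u,v)\in\Ku\times\Ku$, using $\conj{\psi(v)}=\psi(-v)$, and then reparameterize by setting $u=vw$ with $w\in\Ku$, so that the summand becomes $\psi(v(w-1))\chi(w)$. Summing over $v$ first, the $w=1$ term contributes $q-1$, while for each $w\neq 1$ the inner sum over $v$ equals $-1$ by the rescaled form of part (i). What remains is $(q-1)-\sum_{w\neq 1}\chi(w)$, and \cref{Lithium} gives $\sum_{w\in\Ku}\chi(w)=0$, so that $\sum_{w\neq 1}\chi(w)=-1$ and the whole expression equals $q$, as desired.

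For part (iii), I would start from $\conj{G(\chi)}=\sum_{u\in\Ku}\psi(-u)\conj{\chi(u)}$, apply the substitution $u\mapsto -u$ (a bijection of $\Ku$), and note that $\chi(-1)\in\{\pm 1\}$ since $\chi(-1)^2=\chi(1)=1$, whence $\conj{\chi(-1)}=\chi(-1)$; this turns the sum into $\chi(-1)\sum_{u\in\Ku}\psi(u)\conj{\chi}(u)=\chi(-1)G(\conj{\chi})$. These three facts are classical, so there is no genuine obstacle; the only point requiring a little care is the bookkeeping in part (ii), namely verifying that $u=vw$ is a bijection of $\Ku\times\Ku$ and correctly isolating the $w=1$ contribution before invoking orthogonality.
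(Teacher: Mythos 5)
Your proof is correct, but it is worth noting that the paper does not actually prove this lemma at all: its ``proof'' is a citation to Lidl and Niederreiter (Theorem 5.11 for parts (i) and (ii), Theorem 5.12(iii) for part (iii)). What you have written out is, in substance, the classical textbook argument that the citation points to, so there is no conflict of method --- only a difference in self-containedness. Your details all check: in (i) the identity \(G(\chi_0)=\chi_0(\Psi)=|\Psi|\) and the vanishing of \(\sum_{x\in K}\psi(x)\) give \(-1\); in (ii) the reparameterization \(u=vw\) is indeed a bijection of \(\Ku\times\Ku\) (since \(w=u/v\) is well defined for \(v\in\Ku\)), the summand correctly collapses to \(\psi(v(w-1))\chi(w)\) because \(\chi(v)\conj{\chi(v)}=1\), the inner sum over \(v\) is \(q-1\) at \(w=1\) and \(-1\) otherwise (the rescaled form of the computation in (i), valid because \(v\mapsto v(w-1)\) permutes \(\Ku\) when \(w\neq 1\)), and \cref{Lithium} with \(\chi\neq\chi_0\) yields \(\sum_{w\neq 1}\chi(w)=-1\), so the total is \(q\); in (iii) the substitution \(u\mapsto -u\) together with \(\chi(-1)^2=\chi(1)=1\), hence \(\chi(-1)\in\{\pm 1\}\) and \(\conj{\chi(-1)}=\chi(-1)\), gives exactly \(\chi(-1)G(\conj{\chi})\). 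The one hypothesis you are implicitly using throughout, that \(\chi\) and \(\psi\) take values among roots of unity so that complex conjugation is inversion, is immediate from their definitions, so nothing is missing. Your route buys a proof with no external dependencies beyond \cref{Lithium} and \cref{Helium}; the paper's citation buys brevity, which is reasonable given that these are standard facts about Gauss sums.
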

\begin{proof}
For a proof of the first and second parts, see \cite[Theorem 5.11]{Lidl-Niederreiter}; for a proof of the third part, see \cite[Theorem 5.12(iii)]{Lidl-Niederreiter}.
\end{proof}
We record two more useful calculations concerning $\Psi$ and $W$.
\begin{lemma} \label{Nitrogen}
If \(t \in \Z\) and \(\gcd(t, q-1) = 1\), then \(\Psi^{(t)} \conj{\Psi^{(t)}} = q[1] - \Ku\).
\end{lemma}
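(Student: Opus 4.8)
The plan is to compute both sides using the Fourier transform and invoke \cref{Beryllium}, which says a group algebra element is determined by its Fourier coefficients. So I would verify that $\chi(\Psi^{(t)} \conj{\Psi^{(t)}}) = \chi(q[1] - \Ku)$ for every $\chi \in \mchars$, splitting into the cases $\chi = \chi_0$ and $\chi \neq \chi_0$.

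First I would handle the principal character. By \cref{Helium}(v), $\chi_0(\Psi^{(t)} \conj{\Psi^{(t)}}) = \chi_0(\Psi^{(t)}) \chi_0(\conj{\Psi^{(t)}})$. Using \cref{Helium}(i)--(iii) together with \cref{Carbon}(i), each factor is $\pm 1$: indeed $\chi_0(\Psi^{(t)}) = \chi_0^t(\Psi) = \chi_0(\Psi) = -1$ and $\chi_0(\conj{\Psi^{(t)}}) = \conj{\chi_0(\Psi^{(t)})} = -1$, so the product is $1$. On the other side, $\chi_0(q[1] - \Ku) = \chi_0(q[1]) - \chi_0(\Ku) = q - (q-1) = 1$ by \cref{Helium}(i) and \cref{Lithium}. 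These agree.

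Next I would treat a nonprincipal character $\chi \neq \chi_0$. Again by \cref{Helium}(v),(ii),(iii), we get $\chi(\Psi^{(t)} \conj{\Psi^{(t)}}) = \chi^t(\Psi) \conj{\chi^t(\Psi)} = |\chi^t(\Psi)|^2$. The key point is that $\gcd(t,q-1) = 1$ ensures $u \mapsto u^t$ is a permutation of $\Ku$, so the map $\chi \mapsto \chi^t$ is a bijection of $\mchars$; in particular $\chi^t \neq \chi_0$ whenever $\chi \neq \chi_0$. Hence by \cref{Carbon}(ii), $|\chi^t(\Psi)| = |G(\chi^t)| = \sqrt{q}$, giving $|\chi^t(\Psi)|^2 = q$. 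On the other side, $\chi(q[1] - \Ku) = q\,\chi([1]) - \chi(\Ku) = q - 0 = q$ by \cref{Lithium} (since $\chi \neq \chi_0$) and the fact that $\chi([1]) = \chi(1) = 1$. These agree as well.

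Since the Fourier coefficients match at every character, \cref{Beryllium} yields $\Psi^{(t)} \conj{\Psi^{(t)}} = q[1] - \Ku$. I expect no serious obstacle here; the only point requiring care is confirming that $\chi^t$ stays nonprincipal for nonprincipal $\chi$, which is exactly where the hypothesis $\gcd(t,q-1) = 1$ enters (it makes raising characters to the $t$-th power an automorphism of the finite abelian group $\mchars$). Everything else is a direct application of the recorded Fourier-transform identities in \cref{Helium}, \cref{Lithium}, and \cref{Carbon}.
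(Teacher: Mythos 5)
Your proof is correct, but it takes a different route from the paper: the paper does not prove \cref{Nitrogen} at all, instead citing Corollary 2.3 of \cite{Katz-2015}. Your argument via the Fourier transform is sound---the principal-character computation gives $(-1)\cdot\overline{(-1)}=1=q-(q-1)$, and for nonprincipal $\chi$ you correctly isolate the role of the hypothesis $\gcd(t,q-1)=1$: since $\mchars$ is abelian of order $q-1$, the map $\chi\mapsto\chi^t$ is an automorphism, so $\chi^t$ remains nonprincipal and \cref{Carbon}(ii) gives $|\chi^t(\Psi)|^2=q$, matching $\chi(q[1]-\Ku)=q$ via \cref{Lithium}; then \cref{Beryllium} closes the argument. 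This has the virtue of staying entirely inside the toolkit the paper has already recorded (\cref{Helium}, \cref{Lithium}, \cref{Carbon}, \cref{Beryllium}), and it is the same technique the paper itself deploys for the harder identity in \cref{Aluminum}. For comparison, the cited source's statement can also be obtained by a direct convolution computation: the coefficient of $[u]$ in $\Psi^{(t)}\conj{\Psi^{(t)}}$ is $\sum_{z\in\Ku}\psi\bigl((u^{1/t}-1)z\bigr)$ after the substitution $y=u^{1/t}z$ (using invertibility of $t$ modulo $q-1$), which equals $q-1$ when $u=1$ and $-1$ otherwise; that route is more elementary in that it avoids Gauss sum magnitudes and Fourier inversion, while yours is more systematic. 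Either way, your proposal is a complete and correct replacement for the external citation.
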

\begin{proof}
See \cite[Corollary 2.3]{Katz-2015}.
\end{proof}

\begin{lemma} \label{Oxygen}
We have \(|W| = q\) and \(W \conj{W} = q^2[1]\).
\end{lemma}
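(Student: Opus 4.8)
The plan is to prove both assertions through the Fourier transform on the group algebra, using \cref{Beryllium} to reduce the identity $W\conj{W} = q^2[1]$ to a statement about Fourier coefficients.

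First I would dispatch $|W| = q$ by applying the functional $|\cdot|$ to the expression $W = \Psi\conj{\Psi^{(1/s)}} + \Ku$ supplied by \cref{Boron}. Multiplicativity and additivity of $|\cdot|$ (\cref{Hydrogen}(iv),(iii)), together with $|\Psi| = -1$ (\cref{Carbon}(i)) and $|\conj{\Psi^{(1/s)}}| = \conj{|\Psi^{(1/s)}|} = \conj{|\Psi|} = -1$ (\cref{Hydrogen}(i),(ii)), give $|\Psi\conj{\Psi^{(1/s)}}| = (-1)(-1) = 1$; since $|\Ku| = q-1$, this yields $|W| = 1 + (q-1) = q$.

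For the second identity, by \cref{Beryllium} it suffices to verify $\chi(W\conj{W}) = \chi(q^2[1]) = q^2$ for every $\chi \in \mchars$. Because $\chi(W\conj{W}) = \chi(W)\,\conj{\chi(W)} = |\chi(W)|^2$ by \cref{Helium}(v),(iii), everything reduces to showing $|\chi(W)| = q$ for all $\chi$. Applying $\chi$ to \cref{Boron} and writing the Gauss sum $G(\chi) = \chi(\Psi)$, while using \cref{Helium}(ii) to identify $\chi(\Psi^{(1/s)}) = \chi^{1/s}(\Psi) = G(\chi^{1/s})$, gives $\chi(W) = G(\chi)\,\conj{G(\chi^{1/s})} + \chi(\Ku)$. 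I would then split into two cases: when $\chi = \chi_0$ we already have $\chi_0(W) = |W| = q$ by \cref{Helium}(i); when $\chi \neq \chi_0$, then $\chi(\Ku) = 0$ by \cref{Lithium}, so $\chi(W) = G(\chi)\conj{G(\chi^{1/s})}$ and hence $|\chi(W)| = |G(\chi)|\,|G(\chi^{1/s})| = \sqrt{q}\cdot\sqrt{q} = q$ by \cref{Carbon}(ii). In every case $|\chi(W)|^2 = q^2$, which finishes the argument.

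I do not expect a serious obstacle here; the one point demanding a small observation is that $\chi^{1/s}$ is nonprincipal whenever $\chi$ is, since raising to the power $1/s$ permutes $\mchars$ as $\gcd(1/s, q-1) = 1$. This is precisely what lets the $\sqrt{q}$ Gauss-sum bound of \cref{Carbon}(ii) apply to \emph{both} factors simultaneously. As an alternative that sidesteps the Fourier transform altogether, one can expand $W\conj{W}$ directly: writing $\conj{W} = \conj{\Psi}\,\Psi^{(1/s)} + \Ku$ and multiplying out, the leading term $(\Psi\conj{\Psi})(\Psi^{(1/s)}\conj{\Psi^{(1/s)}})$ equals $(q[1]-\Ku)^2$ by \cref{Nitrogen}, the cross terms collapse via $S\Ku = |S|\Ku$ (\cref{Hydrogen}(v)) with $|\Psi\conj{\Psi^{(1/s)}}| = 1$, and the several $\Ku$ contributions cancel to leave exactly $q^2[1]$.
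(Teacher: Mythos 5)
Your proof is correct, but your primary route for the identity $W\conj{W}=q^2[1]$ differs from the paper's. The paper proves $|W|=q$ exactly as you do, and then establishes $W\conj{W}=q^2[1]$ by direct multiplication in the group algebra: it expands $(\Psi\conj{\Psi^{(1/s)}}+\Ku)(\conj{\Psi}\Psi^{(1/s)}+\Ku)$, evaluates the leading term as $(q[1]-\Ku)^2$ via \cref{Nitrogen}, collapses the cross terms with $S\Ku=|S|\Ku$, and checks that the $\Ku$ contributions cancel --- precisely the alternative you sketch in your final paragraph. Your main argument instead passes through the Fourier transform: by \cref{Beryllium} and \cref{Helium} it suffices to show $|\chi(W)|^2=q^2$ for every $\chi\in\mchars$, which you get from $\chi_0(W)=|W|=q$ in the principal case and from $\chi(W)=G(\chi)\conj{G(\chi^{1/s})}$ with $|G(\chi)|=|G(\chi^{1/s})|=\sqrt{q}$ otherwise; your observation that $\chi\mapsto\chi^{1/s}$ permutes $\mchars$ and fixes only the principal character at $\chi_0$'s fiber is exactly the point that makes \cref{Carbon}(ii) apply to both factors, and it is correctly justified by $\gcd(s,q-1)=1$. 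The comparison: the paper's expansion is entirely internal to the group algebra and needs only \cref{Nitrogen} plus the absorption identity, while your character-theoretic route trades that for Gauss-sum magnitudes and in fact rehearses the same technique the paper deploys later in the proof of \cref{Aluminum}, so it arguably illuminates why $W$ is (up to scaling) a unitary element --- each Fourier coefficient has absolute value exactly $q$ --- rather than just verifying the product formula. Both are complete and of comparable length, and since you also supply the paper's expansion as a fallback, nothing is missing.
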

\begin{proof}
The first result follows from Lemmas \ref{Boron}, \ref{Hydrogen}, and \ref{Carbon}, which give us 
\(|W| = |\Psi| |\conj{\Psi^{(1/s)}}| + |\Ku| = (-1)^2 + q-1\). 
The second is proved as follows: 
\begin{align*}
W \conj{W}
& = (\Psi \conj{\Psi^{(1/s)}} + \Ku)(\conj{\Psi \conj{\Psi^{(1/s)}} + \Ku}) \\
& = \Psi \conj{\Psi} \Psi^{(1/s)} \conj{\Psi^{(1/s)}} + \Psi \conj{\Psi^{(1/s)}} \Ku + \conj{\Psi} \Psi^{(1/s)} \Ku + \Ku \Ku \\
& = (|K|[1] - \Ku)^2 + |\Psi \conj{\Psi^{(1/s)}}| \Ku + |\conj{\Psi} \Psi^{(1/s)}| \Ku + |\Ku| \Ku \\
& = |K|^2[1] - 2 |K| \Ku + |\Ku| \Ku + (|\Ku| + 2) \Ku \\
& = |K|^2[1],
\end{align*}
where the third equality uses Lemmas \ref{Nitrogen} and \ref{Hydrogen}, and the fourth equality uses Lemmas \ref{Hydrogen} and \ref{Carbon}.
\end{proof}

The proof of our main result requires us to use generalizations of $W$ whose coefficients are products of Weil sum values rather than individual ones.  
We introduce a convenient notation for these. 

\begin{notation}\label{Winifred}
Let \(k \in \Z_+\) and let \(t=(t_1, t_2, \ldots, t_k) \in \Kuk\). We write
\[
W^{[t]} = \sum_{u \in \Ku} W_{t_1 u} \cdots W_{t_k u} [u].
\]
Often, we just write \(W^{[t_1, \ldots, t_k]}\) instead of \(W^{[(t_1, \ldots, t_k)]}\) and \(W^{[t]}_u\) for \((W^{[t]})_u\).
Also, note that \(W^{[1]} = W\).
\end{notation}

\cref{Neon} and \cref{Aluminum} below make a connection between the group algebra elements just defined in \cref{Winifred} and the cardinalities of algebraic sets defined in \cref{Tennessine}.
The connecting object is defined in \cref{Victor}.
\begin{lemma}\label{Neon}
Let \(k \in \Z_+\) and let \(t = (t_1, \ldots, t_k) \in \Kuk\). Then
\[
|W^{[t]}| = \sum_{u \in \Ku} W_{t_1 u} \cdots W_{t_k u} = \frac{q^2 Q^{t}_{0,0} - q^k}{q-1}.
\]
\end{lemma}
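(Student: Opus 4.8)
The plan is to unfold the Weil sums in the definition of $W^{[t]}$ back into their additive-character form and then interchange the order of summation, thereby converting the sum of products of Weil sum values into a single additive character sum over the hyperplane $t \cdot v = 0$ in $K^k$. Writing each factor as $W_{t_j u} = \sum_{x_j \in K} \psi(x_j^s - t_j u x_j)$ and multiplying the $k$ factors, I would get
\[
W_{t_1 u} \cdots W_{t_k u} = \sum_{v \in K^k} \psi\bigl(\ferms{v} - u\,(t \cdot v)\bigr),
\]
where $v = (x_1, \ldots, x_k)$ and the notations $\ferms{v}$ and $t\cdot v$ are those of \cref{Oganesson}. Summing over $u \in \Ku$ and swapping the two summations isolates the inner character sum $\sum_{u \in \Ku} \psi(-u\,(t \cdot v))$.

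The next step is additive-character orthogonality, but carried out over $\Ku$ rather than all of $K$: I would write $\sum_{u \in \Ku} \psi(-u\,(t\cdot v)) = \sum_{u \in K}\psi(-u\,(t\cdot v)) - 1$, where the full sum equals $q$ when $t \cdot v = 0$ and $0$ otherwise. The contribution of the $-1$ term is $-\sum_{v \in K^k} \psi(\ferms{v})$, which vanishes because $\ferms{v} = \sum_j v_j^s$ factors the sum as $\prod_j \sum_{v_j \in K}\psi(v_j^s)$ and each factor is $0$ (since $x\mapsto x^s$ permutes $K$ and $\psi$ is nontrivial). This leaves
\[
|W^{[t]}| = q \sum_{\substack{v \in K^k \\ t\cdot v = 0}} \psi(\ferms{v}).
\]

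To finish, I would group the remaining sum according to the value $b = \ferm{v}$, which converts it into $\sum_{b \in K} Q^t_{0,b}\,\psi(b^s)$ directly from the definition of $Q^t_{0,b}$ in \cref{Tennessine}. The key input is then \cref{Flerovium}: its equation \eqref{chair} says $Q^t_{0,b}$ takes the constant value $(q^{k-1} - Q^t_{0,0})/(q-1)$ for every $b \in \Ku$, so the $b \neq 0$ terms can be pulled out, leaving $Q^t_{0,0}$ from the $b=0$ term and a factor $\sum_{b \in \Ku}\psi(b^s) = -1$ on the remaining terms. Collecting and clearing the denominator yields $(q^2 Q^t_{0,0} - q^k)/(q-1)$, as claimed.

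The computation is essentially routine; the only points requiring care are keeping the summation over $\Ku$ (not $K$) straight, which is precisely what produces the two separate cancellations (the vanishing of $\sum_v \psi(\ferms{v})$ and the $-1$ arising from $\sum_{b\in\Ku}\psi(b^s)$), and invoking \eqref{chair} correctly to collapse all the off-origin fibers into a single closed form in $Q^t_{0,0}$. The main obstacle, modest as it is, is this last bookkeeping step linking the hyperplane character sum to $Q^t_{0,0}$ alone.
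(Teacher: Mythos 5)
Your proof is correct. Note, however, that the paper does not actually prove this lemma in-house: its entire ``proof'' is a citation to Lemma 7.7.2 of \cite{Katz-2019}. So your argument serves as a self-contained replacement for that external reference, and every step checks out: expanding each $W_{t_j u}$ via \eqref{Weil} and multiplying gives $\sum_{v \in K^k} \psi(\ferms{v} - u\,(t\cdot v))$; summing over $u \in \Ku$ and writing the inner sum as the full orthogonality sum minus $1$ is valid, and the correction term $-\sum_{v \in K^k}\psi(\ferms{v})$ does vanish by the factorization $\prod_j \sum_{v_j \in K}\psi(v_j^s) = 0$, since $x \mapsto x^s$ permutes $K$ (the same observation the paper uses in \cref{INTRODUCTION} to show $\weil_0 = 0$). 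The final bookkeeping is also right: grouping the hyperplane sum by $b = \ferm{v}$ gives $\sum_{b \in K} Q^t_{0,b}\,\psi(b^s)$, the constancy of $Q^t_{0,b}$ for $b \in \Ku$ is exactly \eqref{chair} of \cref{Flerovium}, and $\sum_{b \in \Ku}\psi(b^s) = -1$ again by the permutation property, yielding $q\bigl(Q^t_{0,0} - (q^{k-1}-Q^t_{0,0})/(q-1)\bigr) = (q^2 Q^t_{0,0} - q^k)/(q-1)$ as required. An alternative in the spirit of this paper's group-algebra machinery would be to deduce the identity from \cref{Aluminum} together with Lemmas \ref{Oxygen} and \ref{Magnesium} (indeed $|W^{[t]}| = |W|\,|V^{[t]}| = q\cdot(qQ^t_{0,0}-q^{k-1})/(q-1)$), but the paper cannot do that here because \cref{Neon} is an input to the proof of \cref{Aluminum} (it handles the principal-character Fourier coefficient), so your direct computation avoids a circularity that the shortcut would create.
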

\begin{proof}
This is Lemma 7.7.2 of \cite{Katz-2019}.
\end{proof}

Recall the notations $\cdot$ and $\ferm{\cdot}$ from \cref{Oganesson}, which we use for the rest of this section.
\begin{notation}\label{Victor}
Let \(k \in \Z_+\) and let \(t = (t_1, t_2, \ldots, t_k) \in \Kuk\). 
Then, adopting the convention that \([0]\) is the \(0\) of the group algebra \(\gr\), we write 
\[
V^{[t]} = \sums{v \in K^k \\ t \cdot v = 1} [\ferm{v}] - Q^{t}_{1,0} \Ku,
\]
so that
\[
V^{[t]} = \sum_{u \in \Ku} (Q^t_{1,u}- Q^t_{1,0}) [u].
\]
We often write \(V^{[t_1, \ldots, t_k]}\) instead of \(V^{[(t_1, \ldots, t_k)]}\) and use the notation \(V^{[t]}_u\) to mean \((V^{[t]})_u\).
\end{notation}
The following calculation is needed for our proof of \cref{Aluminum}, which connects $W^{[t]}$ to $V^{[t]}$.
\begin{lemma}\label{Magnesium}
Let \(k \in \Z_+\) and \(t \in \Kuk\). 
Then 
\[
|V^{[t]}| = q^{k-1} - q \cdot Q^t_{1,0} = \frac{q Q^t_{0,0} - q^{k-1}}{q-1}.
\]
\end{lemma}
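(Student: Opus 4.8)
The plan is to compute the norm $|V^{[t]}|$ directly from the second expression for $V^{[t]}$ given in \cref{Victor}, namely $V^{[t]} = \sum_{u \in \Ku} (Q^t_{1,u} - Q^t_{1,0}) [u]$. Since the norm of a group algebra element is the sum of its coefficients, I would first write
\[
|V^{[t]}| = \sum_{u \in \Ku} \left(Q^t_{1,u} - Q^t_{1,0}\right) = \sum_{u \in \Ku} Q^t_{1,u} - (q-1) Q^t_{1,0},
\]
using that the index $u$ ranges over the $q-1$ elements of $\Ku$. The only mild point of care here is that this sum is over $\Ku$, so the $u=0$ term is absent; keeping track of that missing term is where the bookkeeping matters.

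Next I would evaluate $\sum_{u \in \Ku} Q^t_{1,u}$ by invoking \cref{Livermorium}, which (with $b=1$) gives $\sum_{a \in K} Q^t_{1,a} = q^{k-1}$. Separating off the $a=0$ term yields $\sum_{u \in \Ku} Q^t_{1,u} = q^{k-1} - Q^t_{1,0}$. Substituting this into the display above collapses the expression to $|V^{[t]}| = q^{k-1} - Q^t_{1,0} - (q-1) Q^t_{1,0} = q^{k-1} - q\,Q^t_{1,0}$, which is the first claimed equality.

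For the second equality, I would appeal to equation \eqref{chair} in \cref{Flerovium}, which expresses $Q^t_{1,0} = (q^{k-1} - Q^t_{0,0})/(q-1)$. Substituting this into $q^{k-1} - q\,Q^t_{1,0}$ and clearing the denominator $q-1$ is a routine algebraic simplification: the $q^k$ terms cancel and one is left with $(q\,Q^t_{0,0} - q^{k-1})/(q-1)$. There is no real obstacle in this lemma — it is a direct consequence of the two counting identities \cref{Livermorium} and \cref{Flerovium} — so the proof amounts to careful accounting of the excluded $u=0$ term and the substitution from \eqref{chair}.
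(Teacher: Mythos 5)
Your proposal is correct and follows the paper's own argument essentially verbatim: both compute $|V^{[t]}|$ from the coefficient formula in \cref{Victor}, evaluate $\sum_{u} Q^t_{1,u}$ via \cref{Livermorium} (the paper folds the excluded $u=0$ term into $q\,Q^t_{1,0}$ rather than subtracting it from $q^{k-1}$, an immaterial difference), and obtain the second equality by substituting \eqref{chair} from \cref{Flerovium}.
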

\begin{proof}
The first equality comes from using \cref{Victor} to write  \(|V^{[t]}| = \sum_{u \in \Ku} Q^t_{1,u} - (q-1) \cdot Q^t_{1,0} = \sum_{u \in K} Q^t_{1,u} - q \cdot Q^t_{1,0}\) and then applying \cref{Livermorium}. 
The second equality then follows from \cref{Flerovium}.
\end{proof}
Now we show the relation between $V^{[t]}$ and $W^{[t]}$.
\begin{proposition}\label{Aluminum}
For \(k \in \Z_+\) and \(t \in \Kuk\), we have 
\[
W^{[t]} = W V^{[t]}.
\]
\end{proposition}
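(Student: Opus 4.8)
The plan is to prove this by means of \cref{Beryllium}, which says that a group algebra element is determined by its Fourier transform: it suffices to check that $\chi(W^{[t]}) = \chi(WV^{[t]})$ for every $\chi \in \mchars$. Since \cref{Helium} gives $\chi(WV^{[t]}) = \chi(W)\chi(V^{[t]})$, the goal becomes the identity $\chi(W^{[t]}) = \chi(W)\chi(V^{[t]})$, and I would split the verification into the principal character and the non-principal characters.

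For $\chi = \chi_0$ the claim is just $|W^{[t]}| = |W|\,|V^{[t]}|$, and this follows at once from the cardinality formulas already established: \cref{Oxygen} gives $|W| = q$, \cref{Neon} gives $|W^{[t]}| = (q^2 Q^t_{0,0} - q^k)/(q-1)$, and \cref{Magnesium} gives $|V^{[t]}| = (q\,Q^t_{0,0} - q^{k-1})/(q-1)$, so the product of the last two is a one-line check.

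For $\chi \neq \chi_0$ I would compute both sides as Gauss-sum expressions. Expanding $W_{t_i u} = \sum_{x_i \in K}\psi(x_i^s - t_i u x_i)$ from \eqref{Weil} yields $\chi(W^{[t]}) = \sum_{x \in K^k}\psi(\ferms{x})\sum_{u \in \Ku}\chi(u)\psi(-u\,(t\cdot x))$. By \cref{Lithium} the inner sum vanishes when $t\cdot x = 0$ and otherwise collapses to a Gauss-sum factor; reparameterizing $x = a v$ with $a = t\cdot x \in \Ku$ and $t\cdot v = 1$ (so that $\ferms{x} = a^s\,\ferms{v}$), and then substituting $b = a^s$, reduces $\chi(W^{[t]})$ to $\conj{G(\chi^{1/s})}\sum_{t\cdot v = 1}\chi^{1/s}(\ferms{v})$ up to a factor $\chi(-1)^2 = 1$. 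On the other side, \cref{Boron} together with \cref{Helium} and \cref{Carbon} give $\chi(W) = G(\chi)\conj{G(\chi^{1/s})}$, while expanding $V^{[t]}$ from \cref{Victor} and using $\chi(\ferm{v}) = \chi^{1/s}(\ferms{v})$ gives $\chi(V^{[t]}) = \sum_{t\cdot v = 1}\chi^{1/s}(\ferms{v})$; the two products then coincide.

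The main obstacle is the bookkeeping in this last case: carrying out the two reparameterizations cleanly (first factoring the linear form out of $\ferms{x}$, then inverting the power map via $b = a^s$, which is legitimate because $s$ is invertible), and especially reconciling the conjugated Gauss sums that appear. The latter step relies on \cref{Carbon}(iii), namely $\conj{G(\rho)} = \rho(-1)G(\conj{\rho})$, combined with $(-1)^s = -1$ (hence $(-1)^{1/s} = -1$ and $\chi^{1/s}(-1) = \chi(-1)$); these are exactly what cancel the stray $\chi(-1)$ factors so that the two expressions match term by term.
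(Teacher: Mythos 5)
Your proposal is correct and takes essentially the same route as the paper's own proof: reduction to Fourier coefficients via \cref{Beryllium}, the principal character handled by combining \cref{Oxygen,Neon,Magnesium}, and non-principal characters evaluated by the same two reparameterizations and Gauss-sum manipulations, your normalization \(G(\chi)\conj{G(\chi^{1/s})}\) being equal to the paper's \(G(\chi)G(\chi^{-1/s})\chi^{-1/s}(-1)\) by \cref{Carbon}(iii) together with \((-1)^{1/s}=-1\). One transcription slip worth fixing: your displayed reduction of \(\chi(W^{[t]})\) omits the factor \(G(\chi)\) produced when the inner sum over \(u\) collapses via \cref{Lithium}, and this factor must be present for your final matching against \(\chi(W)\chi(V^{[t]})=G(\chi)\conj{G(\chi^{1/s})}\sum_{t\cdot v=1}\chi^{1/s}(\ferms{v})\) to go through, as your own concluding step in fact presumes.
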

\begin{proof}
Since both sides of this equation are elements of \(\gr\), it suffices to show that \(\chi(W^{[t]}) = \chi(W V^{[t]})\) for all \(\chi \in \mchars\) by \cref{Beryllium}.

For the principal character \(\chi_0\) we have
\[
\chi_0(W V^{[t]}) = |W| |V^{[t]}| = \frac{q^2 Q^t_{0,0} - q^k}{q-1} = |W^{[t]}| = \chi_0(W^{[t]}),
\]
where the first and last equalities follow from \cref{Helium}, the second comes from Lemmas \ref{Oxygen} and \ref{Magnesium}, and the third comes from \cref{Neon}.

Now let \(\chi\) be any non-principal character. 
On one hand, we have 
\begin{align*}
\chi\left(W^{[t]}\right) & = \sum_{u \in \Ku} \left(\sum_{x_1 \in K} \psi(x_1^s - t_1 u x_1)\right) \cdots \left(\sum_{x_k \in K} \psi(x_k^s - t_k u x_k)\right) \chi(u) \\
& = \sums{x \in K^k \\ t \cdot x \neq 0} \psi(\ferms{x}) \sum_{u \in \Ku} \psi(-(t \cdot x) u) \chi(u) \\
& = G(\chi) \sum_{w \in \Ku} \sums{x \in K^k \\ t \cdot x = w} \psi(\ferms{x}) \chi^{-1}(-w) \\
& = G(\chi) \sum_{z \in \Ku} \sums{v \in K^k \\ t \cdot v = 1} \psi( \ferms{v} z) \chi^{-1}(-z^{1/s}) \\
& = G(\chi) \sums{v \in K^k \\ t \cdot v = 1 \\ \ferm{v} \neq 0} \sum_{z \in \Ku} \psi(\ferms{v} z) \chi^{-1/s}(-1) \chi^{-1/s}(z) \\
& = G(\chi) G(\chi^{-1/s}) \chi^{-1/s}(-1) \sums{v \in K^k \\ t \cdot v = 1 \\ \ferm{v} \neq 0} \chi^{1/s}(\ferms{v}) \\
& = G(\chi) G(\chi^{-1/s}) \chi^{-1/s}(-1) \chi\left(\sums{v \in K^k \\ t \cdot v = 1} [\ferm{v}]\right),
\end{align*}
where we use \cref{Lithium} to impose $t\cdot x\not=0$ following the second equals sign, and we use the Gauss sum in the third and second-to-last equalities and the reparameterization \(w=z^{1/s}, x=z^{1/s} v\) in the fourth equality.

On the other hand, Lemmas \ref{Boron}, \ref{Helium}, \ref{Lithium}, and \cref{Victor} give us  
\[
\chi(W V^{[t]}) = \chi(\Psi) \chi(\conj{\Psi^{(1/s})}) \cdot \chi\left(\sums{v \in K^k \\ t \cdot v = 1} [\ferm{v}]\right),
\]
where
\[
\chi(\Psi) \chi(\conj{\Psi^{(1/s)}})  = G(\chi) \conj{\chi^{1/s}(\Psi)} = G(\chi) \conj{G(\chi^{1/s})} = G(\chi) G(\chi^{-1/s}) \chi^{-1/s}(-1)
\]
by Lemmas \ref{Carbon} and \ref{Helium}, so the result is proved. 
\end{proof}
For future convenience, we explicitly calculate some values of \(|W^{[t]}|\) and \(V^{[t]}\).
\begin{lemma} \label{Silicon}
For any \(t_1, t_2, t_3 \in \Ku\), we have 
\begin{enumerate}[label=(\roman*)]
\item \(|W| = q\),
\item \(|W^{[t_1,t_2]}| = \begin{cases} 
q^2 & \text{if } t_1 = t_2 \\ 
0 & \text{if } t_1 \neq t_2, 
\end{cases} \)
\item\label{Lemuel} \(|W^{[t_1,t_2,t_3]}| = q^2 V^{[t_1,t_2]}_{1/t_3}\), and
\item \(|W^{[1,1,1,1]}| = q^2 \sum_{u \in \Ku} \left(V^{[1,1]}_u\right)^2\).
\end{enumerate}
\end{lemma}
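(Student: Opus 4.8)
The plan is to dispatch the four parts in turn, observing that (i) and (ii) fall out immediately from results already in hand, while (iii) and (iv) require assembling several of them. Part (i) is literally the first assertion of \cref{Oxygen}, so nothing new is needed. For part (ii), I would invoke \cref{Neon} with $k=2$ and $t=(t_1,t_2)$ to write $|W^{[t_1,t_2]}| = (q^2 Q^{(t_1,t_2)}_{0,0} - q^2)/(q-1)$, and then substitute the value $Q^{(t_1,t_2)}_{0,0} = 1 + (q-1)\delta_{t_1,t_2}$ furnished by \cref{Copernicium}; the factor $q-1$ cancels and leaves $q^2 \delta_{t_1,t_2}$, which is exactly the claimed case distinction. (One could equally route through $|W^{[t_1,t_2]}| = |W|\,|V^{[t_1,t_2]}| = q\,|V^{[t_1,t_2]}|$ via \cref{Aluminum} and \cref{Hydrogen}, then apply \cref{Magnesium} and \cref{Copernicium}.)

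For part (iii), the strategy is to express both sides in terms of the single quantity $Q^{(t_1,t_2,t_3)}_{0,0}$. On the left, \cref{Neon} with $k=3$ gives $|W^{[t_1,t_2,t_3]}| = (q^2 Q^{(t_1,t_2,t_3)}_{0,0} - q^3)/(q-1) = q^2 (Q^{(t_1,t_2,t_3)}_{0,0} - q)/(q-1)$. On the right, I would start from the explicit coefficient formula in \cref{Victor}, namely $V^{[t_1,t_2]}_{1/t_3} = Q^{(t_1,t_2)}_{1,1/t_3} - Q^{(t_1,t_2)}_{1,0}$, unfold the first term using \cref{Nihonium} with $a=1$ and $b=1/t_3$ (so $a/b = t_3$) to get $Q^{(t_1,t_2)}_{1,1/t_3} = (Q^{(t_3,t_1,t_2)}_{0,0} - Q^{(t_1,t_2)}_{0,0})/(q-1)$, and evaluate the second term by \eqref{chair} as $Q^{(t_1,t_2)}_{1,0} = (q - Q^{(t_1,t_2)}_{0,0})/(q-1)$. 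Upon subtraction the $Q^{(t_1,t_2)}_{0,0}$ terms cancel, leaving $V^{[t_1,t_2]}_{1/t_3} = (Q^{(t_3,t_1,t_2)}_{0,0} - q)/(q-1)$. The step I regard as the main subtlety here is the extra observation that $Q^t_{a,b}$ is invariant under permuting the entries of $t$ (via the coordinate-permuting bijection on solution vectors), which yields $Q^{(t_3,t_1,t_2)}_{0,0} = Q^{(t_1,t_2,t_3)}_{0,0}$; with this identification the right-hand side matches the left and (iii) follows.

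For part (iv), I would use a conjugate-pairing argument in the group algebra rather than counting solutions. Set $S = W^{[1,1]}$, whose coefficients are $S_u = W_u^2$. By part (vii) of \cref{Hydrogen} the diagonal coefficient of $S\conj{S}$ is $(S\conj{S})_1 = \sum_{u \in \Ku} |S_u|^2 = \sum_{u \in \Ku} W_u^4 = |W^{[1,1,1,1]}|$, where $|W_u^2|^2 = W_u^4$ because $W_u \in \R$ by \cref{Real}. On the other hand, \cref{Aluminum} gives $S = W V^{[1,1]}$, and since $\gr$ is commutative and conjugation is multiplicative on it we have $S\conj{S} = (W\conj{W})(V^{[1,1]}\conj{V^{[1,1]}}) = q^2 V^{[1,1]}\conj{V^{[1,1]}}$, using $W\conj{W} = q^2[1]$ from \cref{Oxygen} and the fact that multiplication by $q^2[1]$ is just scaling by $q^2$. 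Taking the coefficient at $1$ and applying part (vii) of \cref{Hydrogen} once more gives $(S\conj{S})_1 = q^2 \sum_{u\in\Ku} |V^{[1,1]}_u|^2 = q^2 \sum_{u\in\Ku}(V^{[1,1]}_u)^2$, the final equality holding because each $V^{[1,1]}_u$ is an integer difference of solution counts and hence real. Comparing the two evaluations of $(S\conj{S})_1$ proves (iv). The only points to watch in this part are the two appeals to reality of the coefficients (so that $|\cdot|^2$ becomes a genuine square) and the routine scaling by $q^2[1]$.
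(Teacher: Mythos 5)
Your proposal is correct in all four parts. Parts (i) and (iv) coincide with the paper's own argument: the paper likewise cites \cref{Oxygen} for (i), and for (iv) it writes \(\sum_{u \in \Ku} W_u^4 = (W^{[1,1]}\conj{W^{[1,1]}})_1\) and factors \(W^{[1,1]}\conj{W^{[1,1]}} = q^2\, V^{[1,1]}\conj{V^{[1,1]}}\) via \cref{Aluminum} and \cref{Oxygen}, exactly as you do. Where you genuinely diverge is in (ii) and (iii). The paper stays inside the group algebra and exploits the reality of Weil sums (\cref{Real}): for (ii) it computes \(|W^{[t_1,t_2]}| = (W \conj{W})_{t_1/t_2}\) and reads the answer off \(W\conj{W} = q^2[1]\) from \cref{Oxygen}; for (iii) it first shows \(|W^{[t_1,t_2,t_3]}| = \bigl(W^{[t_1,t_2]}\conj{W}\bigr)_{1/t_3}\) and then applies \cref{Aluminum} together with \(W\conj{W}=q^2[1]\) to obtain \(q^2 V^{[t_1,t_2]}_{1/t_3}\). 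You instead reduce both parts to the single point count \(Q^t_{0,0}\) using \cref{Neon}, \cref{Copernicium}, \cref{Nihonium}, and \eqref{chair}; your computations check out, including the one step the paper's route never needs, namely \(Q^{(t_3,t_1,t_2)}_{0,0} = Q^{(t_1,t_2,t_3)}_{0,0}\), which is forced on you because \cref{Nihonium} prepends \(a/b\) to the tuple, and which your coordinate-permuting bijection justifies correctly (both \(t \cdot v\) and \(\ferms{v}\) are invariant under simultaneously permuting the entries of \(t\) and \(v\), though note this symmetry is nowhere stated in the paper, so you were right to flag and prove it). As for what each approach buys: the paper's convolution argument is shorter and makes (ii) and (iii) two instances of the same diagonal-coefficient computation, at the cost of invoking \cref{Real} in both; your route makes the combinatorial content explicit, showing that all these moments are governed by one solution count, and in (ii) and (iii) it avoids the reality of Weil sums altogether, at the cost of the extra permutation-symmetry observation and heavier reliance on the counting lemmas of \cref{SETS}.
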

\begin{proof}
The first result is from \cref{Oxygen}.

Next, we use \cref{Real} and \cref{Oxygen} to obtain  
\[
|W^{[t_1,t_2]}| = \sums{x, y \in \Ku \\ x y = t_1/t_2} W_x \conj{W_{1/y}}
= (W \conj{W})_{t_1/t_2}
=  \begin{cases}
  q^2 & \text{ if } t_1/t_2=1 \\
  0 	 & \text{otherwise}.
\end{cases}
\]

For \ref{Lemuel}, we use \cref{Real} to show that \(|W^{[t_1,t_2,t_3]}| = \left(W^{[t_1,t_2]} \conj{W}\right)_{1/t_3}\).
Then, Lemmas \ref{Aluminum} and \ref{Oxygen} and \cref{Victor} give us that 
\[
\left(W^{[t_1,t_2]} \conj{W}\right)_{1/t_3} = \left(W V^{[t_1,t_2]} \conj{W}\right)_{1/t_3} = q^2 V^{[t_1,t_2]}_{1/t_3}.
\]

Lastly, we observe that \(\sum_{u \in \Ku} W_u^4 = (W^{[1,1]} \conj{W^{[1,1]}})_1\), so the fourth result follows from Lemmas \ref{Aluminum} and \ref{Oxygen}, which tell us that 
\[
W^{[1,1]} \conj{W^{[1,1]}}= (W V^{[1,1]}) (\conj{W V^{[1,1]}}) = q^2 V^{[1,1]} \conj{V^{[1,1]}}. \qedhere
\] 
\end{proof}

\begin{lemma}\label{Phosphorus}
If $t=(t_1,t_2) \in (\Ku)^2$, then we have 
\[
V^{[t]} = \begin{cases}
  \sums{v \in K^2 \\ v \cdot t = 1} [\ferm{v}] & \text{if } t_1=t_2 \\
  \sums{v \in K^2 \\ v \cdot t = 1} [\ferm{v}] - \Ku & \text{otherwise,}
\end{cases}
\]
that is,
\[
V^{[t]} = \begin{cases}
\sum_{u \in \Ku} Q^t_{1,u} [u]  & \text{if } t_1=t_2 \\
\sum_{u \in \Ku} (Q^t_{1,u}-1) [u] & \text{otherwise,}
\end{cases}
\]
so that $V^{[t]}_u \geq -1$ for every $u \in \Ku$, and if $t_1=t_2$ then $V^{[t]}_u \geq 0$ for every $u \in \Ku$.
Furthermore, 
\[
|V^{[t]}| = \begin{cases}
  q & \text{if } t_1=t_2 \\
  0 & \text{otherwise}.
\end{cases}
\]
\end{lemma}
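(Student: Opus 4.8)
The plan is to read off everything directly from the two expressions for $V^{[t]}$ recorded in \cref{Victor}, specializing the general $Q$-calculus of \cref{SETS} to $k=2$. The coefficient form $V^{[t]}_u = Q^t_{1,u} - Q^t_{1,0}$ shows that the only quantity sensitive to whether $t_1=t_2$ is the single number $Q^t_{1,0}$, so I would compute that first and let the rest fall out.

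First I would evaluate $Q^t_{0,0}$ using the $a=b=0$ case of \cref{Copernicium}(ii), which gives $Q^t_{0,0} = 1 + (q-1)\delta_{t_1,t_2}$; thus $Q^t_{0,0}=q$ when $t_1=t_2$ and $Q^t_{0,0}=1$ when $t_1\neq t_2$. Feeding this into equation \eqref{chair} of \cref{Flerovium} (valid since $a=1\in\Ku$) yields $Q^t_{1,0} = (q - Q^t_{0,0})/(q-1)$, which is $0$ in the equal case and $1$ in the unequal case. Substituting into $V^{[t]}_u = Q^t_{1,u} - Q^t_{1,0}$ then immediately gives $V^{[t]} = \sum_{u\in\Ku} Q^t_{1,u}[u]$ when $t_1=t_2$ and $V^{[t]} = \sum_{u\in\Ku}(Q^t_{1,u}-1)[u] = \sum_{u\in\Ku}Q^t_{1,u}[u] - \Ku$ when $t_1\neq t_2$, matching the second displayed formula.

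To obtain the first displayed form I would invoke the other expression from \cref{Victor}, namely $V^{[t]} = \sum_{v\in K^2,\; t\cdot v=1}[\ferm{v}] - Q^t_{1,0}\Ku$: under the convention $[0]=0$ the terms with $\ferm{v}=0$ vanish, so $\sum_{v\in K^2,\,t\cdot v=1}[\ferm{v}]$ is exactly $\sum_{u\in\Ku}Q^t_{1,u}[u]$, and the $-Q^t_{1,0}\Ku$ summand becomes $0$ or $-\Ku$ according to the case. The coefficient bounds are then automatic, since $Q^t_{1,u}$ is a cardinality and hence nonnegative: this forces $V^{[t]}_u\geq -1$ always and $V^{[t]}_u\geq 0$ in the equal case. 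Finally, for $|V^{[t]}|$ I would quote \cref{Magnesium}, which gives $|V^{[t]}| = q^{k-1} - q\,Q^t_{1,0} = q - q\,Q^t_{1,0}$, equal to $q$ when $t_1=t_2$ and to $0$ when $t_1\neq t_2$.

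I do not anticipate a genuine obstacle, as the statement is a straightforward specialization of the $k$-variable machinery to $k=2$; the only point requiring care is the bookkeeping around the convention $[0]=0$ when passing between the $\sum_v$ and $\sum_u$ forms, namely verifying that the $v$ with $\ferm{v}=0$ and $t\cdot v=1$ are precisely the $Q^t_{1,0}$ instances absorbed into the subtracted $Q^t_{1,0}\Ku$ term.
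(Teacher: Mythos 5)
Your proposal is correct and takes essentially the same route as the paper, which likewise reads both displayed forms off \cref{Victor}, specializes the value of \(Q^t_{1,0}\) using \cref{Copernicium}, obtains \(|V^{[t]}|\) from \cref{Magnesium}, and gets the coefficient bounds from the nonnegativity of the counts \(Q^t_{1,u}\). Your only deviation---deriving \(Q^t_{1,0}=1-\delta_{t_1,t_2}\) from \(Q^t_{0,0}\) via \eqref{chair} of \cref{Flerovium} rather than quoting the ``otherwise'' case of \cref{Copernicium}(ii) directly---simply unwinds the paper's own proof of that case, so the two arguments coincide in substance.
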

\begin{proof}
These facts follow from \cref{Victor} and \cref{Magnesium}, as well as the formula for \(Q^t_{1,0}\) found in \cref{Copernicium} and the fact that $Q^t_{1,u}$ values are always nonnegative, since they count solutions to systems of equations.
\end{proof}

\subsection{Symmetrized Weil sums} \label{TOR}

In later sections we study Weil spectra where there is a symmetry among the Weil sums \(\weil_u\). 
Here we present some general results. 

Fix some \(k \in \Z_+\) and suppose that \(p \equiv 1 \pmod{k}\).
Then we let
\[
T = \sum_{i=0}^{k-1} [\lambda^i],
\]
where \(\lambda\) is a primitive \(k\)th root of unity in \(\Fpu\). 
We also let 
\[
\Omega = \sum_{u \in \Ku} \left(\sum_{i=0}^{k-1} W_{\lambda^i u}\right) [u].
\]
We call $\Omega_u=\sum_{i=0}^{k-1} W_{\lambda^i u}$ {\it the $k$-laterally symmetrized Weil sum at $u$}, and we use the word {\it bilateral} to mean $2$-lateral.
Note that $\Omega$ has real coefficients by \cref{Real}.
First, we relate $\Omega$ to $W$ and $T$.
\begin{lemma} \label{Sulfur}
We have \(\Omega = W T\).
\end{lemma}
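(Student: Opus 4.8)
The plan is to verify the identity by a direct computation of the group-algebra product $WT$, reading off its coefficients and matching them against those of $\Omega$. Using the definition of $W$ from \eqref{celery} together with the definition $T = \sum_{i=0}^{k-1}[\lambda^i]$, I would expand
\[
WT = \left(\sum_{u \in \Ku} W_u [u]\right)\left(\sum_{i=0}^{k-1}[\lambda^i]\right) = \sum_{u \in \Ku}\sum_{i=0}^{k-1} W_u [u\lambda^i],
\]
using the multiplication rule of $\gr$, under which $[u][\lambda^i] = [u\lambda^i]$.

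The key step is the reparameterization $v = u\lambda^i$ (equivalently $u = v\lambda^{-i}$), which for each fixed $i$ is a bijection of $\Ku$ onto itself since $\lambda^i \in \Fpu \subseteq \Ku$. This turns the double sum into
\[
WT = \sum_{i=0}^{k-1}\sum_{v \in \Ku} W_{v\lambda^{-i}}[v] = \sum_{v \in \Ku}\left(\sum_{i=0}^{k-1} W_{v\lambda^{-i}}\right)[v].
\]
Then I would observe that, because $\lambda$ is a primitive $k$th root of unity, the set $\{\lambda^{-i} : 0 \le i < k\}$ is exactly the cyclic group $\ggen{\lambda}$, the same set as $\{\lambda^i : 0 \le i < k\}$; hence the inner sum equals $\sum_{i=0}^{k-1} W_{v\lambda^i} = \Omega_v$. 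Comparing coefficients yields $WT = \sum_{v\in\Ku}\Omega_v[v] = \Omega$.

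Since this is a short formal manipulation, I do not anticipate a genuine obstacle; the only point requiring care is confirming that the reindexing $v = u\lambda^i$ is a legitimate bijection of $\Ku$ and that inverting the index set leaves $\ggen{\lambda}$ unchanged. As an alternative, one could invoke \cref{Beryllium} and instead check that $\chi(\Omega) = \chi(W)\chi(T)$ for every $\chi \in \mchars$, using the multiplicativity of Fourier coefficients from \cref{Helium}; but the direct coefficient computation above seems the most transparent route.
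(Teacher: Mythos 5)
Your proof is correct and is essentially the same as the paper's: both rest on the reindexing $v=\lambda^i u$ (a bijection of $\Ku$ for each $i$) and the observation that $\{\lambda^{-i}:0\le i<k\}=\{\lambda^i:0\le i<k\}=\ggen{\lambda}$, with the paper merely running the computation in the other direction, starting from $\Omega$ and recognizing each inner sum as $W\cdot[\lambda^{-i}]$. No gaps; the Fourier-coefficient alternative you mention would also work but is unnecessary here.
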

\begin{proof}
Reordering the sums in the definition of \(\Omega\) gives us
\[
\Omega = \sum_{i=0}^{k-1} \sum_{u \in \Ku} W_{\lambda^i u} [u] 
	   = \sum_{i=0}^{k-1} W \cdot [\lambda^{-i}] 
	   = W \cdot \sum_{i=0}^{k-1} [\lambda^i]. \qedhere
\]
\end{proof}
We compute power moments for $\Omega$.
\begin{lemma}\label{Chlorine}
We have 
\begin{enumerate}[label=(\roman*)]
\item\label{alpaca} \(\sum_{u \in \Ku} \Omega_u^0 = q-1\);
\item\label{buffalo} \(\sum_{u \in \Ku} \Omega_u = k q\);
\item\label{caribou} \(\sum_{u \in \Ku} \Omega_u^2 = k q^2\); and
\item if \(k=2\), then \(\sum_{u \in \Ku} \Omega_u^3 = 2 q^2 (V^{[1,1]}_1 + 3 V^{[1,1]}_{-1})\).
\end{enumerate}
\end{lemma}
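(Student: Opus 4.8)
The plan is to read off all four moments from the group-algebra factorization $\Omega = W T$ established in \cref{Sulfur}, exploiting that $T = \sum_{i=0}^{k-1}[\lambda^i]$ represents the cyclic subgroup $\ggen{\lambda}$ of order $k$ in $\Ku$, so that $|T| = k$, $\conj{T} = T^{(-1)} = T$, and $T^2 = |T|\,T = k T$ by \cref{Hydrogen}, together with the identities $|W| = q$ and $W \conj{W} = q^2 [1]$ from \cref{Oxygen}. For part (i), each summand $\Omega_u^0$ equals $1$ (adopting $0^0=1$), so the sum is $|\Ku| = q-1$. For part (ii), I would write $\sum_{u \in \Ku} \Omega_u = |\Omega| = |W T| = |W|\,|T|$ using the multiplicativity of $|\cdot|$ (\cref{Hydrogen}), and then substitute $|W| = q$ and $|T| = k$ to obtain $kq$.

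For part (iii), the key observation is that $\Omega$ has real coefficients by \cref{Real}, so that $(\Omega \conj{\Omega})_1 = \sum_{u \in \Ku} |\Omega_u|^2 = \sum_{u \in \Ku} \Omega_u^2$ by \cref{Hydrogen}. I would then compute, using commutativity of the group algebra, $\Omega \conj{\Omega} = W \conj{W}\, T \conj{T} = (q^2[1])(T^2) = (q^2[1])(k T) = k q^2 T$, and extract the coefficient of $[1]$. Since $1 = \lambda^0$ occurs exactly once among the distinct elements $\lambda^0, \ldots, \lambda^{k-1}$, the coefficient $T_1$ equals $1$, giving $\sum_{u \in \Ku}\Omega_u^2 = k q^2$.

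The final part demands the most care. With $k=2$ we have $\lambda = -1$ (a primitive square root of unity in $\Fpu$, which exists since $p$ is then odd) and $\Omega_u = W_u + W_{-u}$, so expanding the cube gives $\Omega_u^3 = W_u^3 + 3 W_u^2 W_{-u} + 3 W_u W_{-u}^2 + W_{-u}^3$. Reindexing $u \mapsto -u$ in the sum shows that $\sum_{u} W_{-u}^3 = \sum_{u} W_u^3$ and $\sum_{u} W_u W_{-u}^2 = \sum_{u} W_u^2 W_{-u}$, collapsing the total to $2\sum_{u} W_u^3 + 6\sum_{u} W_u^2 W_{-u}$. By \cref{Winifred} and the definition of $|\cdot|$, these two sums are exactly $|W^{[1,1,1]}|$ and $|W^{[1,1,-1]}|$, and then part (iii) of \cref{Silicon}, which states $|W^{[t_1,t_2,t_3]}| = q^2 V^{[t_1,t_2]}_{1/t_3}$, yields $|W^{[1,1,1]}| = q^2 V^{[1,1]}_1$ and $|W^{[1,1,-1]}| = q^2 V^{[1,1]}_{-1}$; assembling these gives $2q^2\big(V^{[1,1]}_1 + 3 V^{[1,1]}_{-1}\big)$. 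The only genuine obstacle is the bookkeeping in this cubic expansion, and in particular correctly tracking the index $1/t_3$ in \cref{Silicon} so that the single $W_{-u}$ factor lands on $V^{[1,1]}_{-1}$ (with multiplicity $6$) rather than on $V^{[1,1]}_{1}$.
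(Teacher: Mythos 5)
Your proof is correct and follows essentially the same route as the paper's: parts (i)--(iii) are read off from the factorization \(\Omega = WT\) of \cref{Sulfur} together with Lemmas \ref{Hydrogen} and \ref{Oxygen} and the reality of \(\Omega\)'s coefficients, and part (iv) comes from expanding \((W_u + W_{-u})^3\), collecting terms, and applying \cref{Silicon}\ref{Lemuel}, exactly as in the paper. Your additional bookkeeping---verifying \(T^2 = kT\) and \(T_1 = 1\), the reindexing \(u \mapsto -u\), and the index \(1/t_3 = -1\)---merely makes explicit steps the paper leaves implicit.
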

\begin{proof}
The first equation comes from the fact that \(|\Ku| = q-1\).
The second and third results follow from Lemmas \ref{Sulfur} and \ref{Oxygen} as well as the fact that the coefficients of \(\Omega\) are real, so that 
\(
\sum_{u \in \Ku} \Omega_u = |\Omega| = |W| |T| = q k
\)
and 
\(
\sum_{u \in \Ku} \Omega_u^2 = (\Omega \conj{\Omega})_1 = (W T \conj{W} \conj{T})_1 = q^2 (T^2)_1 = q^2 k. 
\)
Lastly, if \(k=2\), then 
\begin{align*}
\sum_{u \in \Ku} \Omega_u^3 = \sum_{u \in \Ku} (W_u + W_{-u})^3 = 2 \sum_{u \in \Ku} W_u^3 + 6 \sum_{u \in \Ku} W_u^2 W_{-u}, 
\end{align*}
so the desired result follows from \cref{Silicon}\ref{Lemuel}.
\end{proof}

When \(p\) is odd and \(k=2\), we have further results, which we explore in the next section.

\subsection{Bilateral symmetry in the group algebra}\label{PHILIPPE}

In Propositions \ref{Enoch} and \ref{Barnabas} below we study bilaterally symmetrized Weil sums.
Here, we present some general results that hold in this situation.
To this end, suppose that \(p\) is odd and let
\begin{align*}
S = [1] - [-1],  \quad\,\,\, & \Phi = \sum_{u \in \Ku} (W_u - W_{-u}) [u], \,\,\, && \Upsilon = \sum_{u \in \Ku} (W_u - W_{-u})^2 [u], \\
T = [1] + [-1], \quad\,\,\, & \Omega = \sum_{u \in \Ku} (W_u +W_{-u}) [u].
\end{align*}
Note that this use of \(T\) and \(\Omega\) is consistent with the notation introduced in \cref{TOR} when \(k=2\).
Also, note that \(\Phi\), \(\Omega\), and \(\Upsilon\) have real coefficients by \cref{Real}.
For convenience of notation, we set 
\[
V = V^{[1,1]} \qquad \text{ and } \qquad U = V^{[1,-1]}.
\]
We relate the various group algebra elements that we have just defined.
\begin{lemma}\label{Argon}
We have \(\Phi = W S\) and \(\Upsilon = W(T V - 2 U)\).
\end{lemma}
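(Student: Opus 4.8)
The plan is to prove both identities by direct computation in the group algebra $\gr$, leaning on the multiplication rule $[u][u'] = [uu']$ and on \cref{Aluminum}, which expresses each $W^{[t]}$ as $W V^{[t]}$. The first identity is essentially immediate. Since $S = [1] - [-1]$, I would compute $WS = W[1] - W[-1]$, where $W[1] = W$ and, after reindexing $u \mapsto -u$, $W[-1] = \sum_{u \in \Ku} W_u [-u] = \sum_{u \in \Ku} W_{-u}[u]$. Subtracting gives $WS = \sum_{u \in \Ku}(W_u - W_{-u})[u] = \Phi$, as desired.

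For the second identity, the starting point is to expand the coefficient of $\Upsilon$ pointwise: $(W_u - W_{-u})^2 = W_u^2 - 2 W_u W_{-u} + W_{-u}^2$. Recognizing these three terms as the $u$-coefficients of $W^{[1,1]}$, $W^{[1,-1]}$, and $W^{[-1,-1]}$ respectively (per \cref{Winifred}), I would write $\Upsilon = W^{[1,1]} - 2 W^{[1,-1]} + W^{[-1,-1]}$. Applying \cref{Aluminum} to each summand and recalling the abbreviations $V = V^{[1,1]}$ and $U = V^{[1,-1]}$, this becomes
\[
\Upsilon = W\left(V - 2U + V^{[-1,-1]}\right).
\]
It then remains to identify $V + V^{[-1,-1]}$ with $TV$.

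The crux of the argument is therefore the identity $V^{[-1,-1]} = V[-1]$. To establish it, I would compute the coefficients from \cref{Victor}: $V^{[-1,-1]}_u = Q^{(-1,-1)}_{1,u} - Q^{(-1,-1)}_{1,0}$. Using the first equality of \cref{Moscovium} with scaling factor $-1$ gives $Q^{(-1,-1)}_{1,u} = Q^{(1,1)}_{-1,u}$, and the second equality of \cref{Moscovium} (again with factor $-1$) gives $Q^{(1,1)}_{-1,u} = Q^{(1,1)}_{1,-u}$; similarly $Q^{(-1,-1)}_{1,0} = Q^{(1,1)}_{1,0}$. Hence $V^{[-1,-1]}_u = Q^{(1,1)}_{1,-u} - Q^{(1,1)}_{1,0} = V_{-u}$, which is exactly the $u$-coefficient of $V[-1]$, so $V^{[-1,-1]} = V[-1]$. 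Since $T = [1] + [-1]$ yields $TV = V + V[-1] = V + V^{[-1,-1]}$, I conclude $V - 2U + V^{[-1,-1]} = TV - 2U$, and therefore $\Upsilon = W(TV - 2U)$. The main obstacle is this last identification of $V^{[-1,-1]}$; everything else is routine bookkeeping, and the reparameterization underlying \cref{Moscovium} (together with $(-1)^s = -1$, which holds because $s$ is invertible) is what makes the $-1$ twist behave correctly.
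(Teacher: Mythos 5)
Your proof is correct and takes essentially the same route as the paper, whose one-line proof invokes exactly the ingredients you use: the definitions of $S$, $T$, $U$, $V$, $\Phi$, $\Upsilon$ together with \cref{Aluminum}. The only extra work in your argument is the identification $V^{[-1,-1]} = V[-1]$ via \cref{Moscovium}, which is correct but could be bypassed by noting $\sum_{u \in \Ku} W_{-u}^2\,[u] = W^{[1,1]}[-1] = W V [-1]$, whence $\Upsilon = WV + WV[-1] - 2WU = W(TV - 2U)$ directly.
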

\begin{proof}
These results come from the above notation and \cref{Aluminum}.
\end{proof}

Before we prove further results, we shall restate in the notation of this section a few key facts that we have proved earlier.
\begin{lemma}\label{Potassium}
We have 
\begin{enumerate}[label=(\roman*)]
\item\label{kangaroo} \(U_u = Q^{(1,-1)}_{1,u} - 1\) and \(V_u = Q^{(1,1)}_{1,u}\) so in particular, \(U_u \geq -1\) and \(V_u \geq 0\) for all \(u \in \Ku\);
\item\label{koala} \(U_u = U_{-u}\) for any \(u \in \Ku\);
\item\label{wallaby} \(U_1 = U_{-1} = V_{-1}\); and
\item\label{wombat} \(\sum_{u \in \Ku} U_u = 0\) and \(\sum_{u \in \Ku} V_u = q\).
\end{enumerate}
\end{lemma}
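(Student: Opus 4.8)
Lemma \ref{Potassium} restates earlier results in the new notation, so my plan is to trace each claim back to its source. Since $U = V^{[1,-1]}$ and $V = V^{[1,1]}$, I would first invoke \cref{Phosphorus} with $t = (1,1)$ and $t = (1,-1)$: for the equal case we get $V_u = Q^{(1,1)}_{1,u}$, and for the unequal case we get $U_u = Q^{(1,-1)}_{1,u} - 1$. This immediately yields \ref{kangaroo}, including the inequalities, since the $Q^t_{1,u}$ count solutions to a system of equations and are therefore nonnegative integers. This first step is essentially a direct citation of \cref{Phosphorus} and should be routine.

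For \ref{koala} and \ref{wallaby}, I would appeal to \cref{Roentgenium}. Part (i) of that lemma gives $Q^{(1,-1)}_{1,w} = Q^{(1,-1)}_{1,-w}$, which translates directly into $U_u = U_{-u}$ after subtracting $1$ from each side, proving \ref{koala}. Part (ii) of \cref{Roentgenium} (which requires $p$ odd, as assumed throughout this subsection) gives $Q^{(1,-1)}_{1,1} - 1 = Q^{(1,-1)}_{1,-1} - 1 = Q^{(1,1)}_{1,-1}$; reading these three quantities through the translations in \ref{kangaroo}, the first two equal $U_1$ and $U_{-1}$ respectively, while the last equals $V_{-1}$, so $U_1 = U_{-1} = V_{-1}$, establishing \ref{wallaby}.

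Finally, for \ref{wombat} I would compute $\sum_{u \in \Ku} U_u = |U| = |V^{[1,-1]}|$ and $\sum_{u \in \Ku} V_u = |V| = |V^{[1,1]}|$, then read off the values from the $|V^{[t]}|$ formula in \cref{Phosphorus}: since the two entries of $(1,-1)$ differ, $|U| = 0$, and since the two entries of $(1,1)$ agree, $|V| = q$. This gives $\sum_{u \in \Ku} U_u = 0$ and $\sum_{u \in \Ku} V_u = q$ as claimed.

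None of these steps presents a genuine obstacle, since the lemma is purely a bookkeeping restatement; the only point requiring a little care is matching the indices correctly when passing between the $Q^t_{a,b}$ notation of \cref{Roentgenium} and the coefficient notation $U_u, V_u$ introduced at the start of this subsection, particularly in verifying that the ``$-1$'' shifts in \ref{kangaroo} are applied consistently when deriving \ref{koala} and \ref{wallaby}.
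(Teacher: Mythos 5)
Your proposal is correct and follows essentially the same route as the paper's proof: part \ref{kangaroo} from the definitions of $U$, $V$ together with \cref{Phosphorus}, parts \ref{koala} and \ref{wallaby} from the first two parts of \cref{Roentgenium} (using that $p$ is odd) translated via \ref{kangaroo}, and part \ref{wombat} from the $|V^{[t]}|$ formula in \cref{Phosphorus}. Nothing to correct.
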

\begin{proof}
Part \ref{kangaroo} follows from the definitions of \(U\) and \(V\) and \cref{Phosphorus}. 
Then, using the result in part \ref{kangaroo} and the assumption that \(p\) is odd, parts \ref{koala} and \ref{wallaby} follow from the first two parts of \cref{Roentgenium}.
Lastly, the part \ref{wombat} comes from \cref{Phosphorus}.
\end{proof}
We compute some power moments for $\Phi$ and a related sum that involves both $\Phi$ and $\Omega$.
\begin{lemma}\label{Calcium}
We have 
\begin{enumerate}[label = (\roman*)]
\item \(\sum_{u \in \Ku} \Phi_u = 0\),
\item\label{gnu} \(\sum_{u \in \Ku} \Phi_u^2 = 2 q^2\), 
\item \(\sum_{u \in \Ku} \Phi_u^2 \cdot \Omega_u = 2 q^2 (V_1 - V_{-1})\), and 
\item\label{ibex} \(\sum_{u \in \Ku} \Phi_u^4 = q^2 \sum_{u \in \Ku} (V_u + V_{-u} - 2 U_u)^2\).
\end{enumerate}
\end{lemma}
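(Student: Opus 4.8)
The plan is to treat all four quantities as power moments of the real-coefficient group algebra elements $\Phi$ and $\Upsilon$, and to convert each pointwise moment into either an absolute size $|\cdot|$ or a coefficient at $[1]$ of a group algebra product, exploiting the factorizations $\Phi = WS$ and $\Upsilon = W(TV-2U)$ from \cref{Argon} together with the central identity $W\conj{W} = q^2[1]$ from \cref{Oxygen}. Throughout I would use that $\Phi$, $\Omega$, and $\Upsilon$ have real coefficients (\cref{Real}), which is what lets me identify sums of squares with coefficients at $[1]$ of $S\conj{S}$-type products via \cref{Hydrogen}\ref{gar}.

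For part (i), I would write $\sum_{u\in\Ku}\Phi_u = |\Phi| = |W|\,|S|$ using \cref{Hydrogen}\ref{dace} and \cref{Argon}; since $|S| = |[1]-[-1]| = 0$, the sum vanishes. For part (ii), realness of $\Phi$ gives $\sum_{u\in\Ku}\Phi_u^2 = (\Phi\conj{\Phi})_1$ by \cref{Hydrogen}\ref{gar}. I would then factor $\Phi\conj{\Phi} = W\conj{W}\, S\conj{S}$ (conjugation is multiplicative on the abelian group algebra), substitute $W\conj{W} = q^2[1]$, and compute $S\conj{S}$ directly (noting $\conj{S}=S$) to obtain $2[1]-2[-1]$, so that $(\Phi\conj{\Phi})_1 = q^2\cdot 2 = 2q^2$.

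The main work is in part (iii), where the moment mixes $\Phi$ and $\Omega$ and no single factorization applies cleanly. Here I would expand pointwise, writing $\Phi_u^2\Omega_u = (W_u - W_{-u})^2(W_u+W_{-u}) = (W_u - W_{-u})(W_u^2 - W_{-u}^2)$, and then use the substitution $u\mapsto -u$ to collapse the four resulting cubic terms into $2\sum_{u\in\Ku}W_u^3 - 2\sum_{u\in\Ku}W_u^2 W_{-u}$. Recognizing these as $|W^{[1,1,1]}|$ and $|W^{[1,1,-1]}|$ in the notation of \cref{Winifred}, I would apply \cref{Silicon}\ref{Lemuel} to get $|W^{[1,1,1]}| = q^2 V_1$ and $|W^{[1,1,-1]}| = q^2 V_{-1}$, yielding $2q^2(V_1 - V_{-1})$. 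I expect the bookkeeping of the cubic expansion and the symmetry reindexing (the same device used in the proof of \cref{Chlorine}) to be the most error-prone step, though it is routine.

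For part (iv), I would observe that $\Upsilon_u = \Phi_u^2$, so by realness and \cref{Hydrogen}\ref{gar} we have $\sum_{u\in\Ku}\Phi_u^4 = (\Upsilon\conj{\Upsilon})_1$. Factoring $\Upsilon\conj{\Upsilon} = W\conj{W}\,(TV-2U)\conj{(TV-2U)} = q^2[1]\,(TV-2U)\conj{(TV-2U)}$ via \cref{Argon} and \cref{Oxygen} reduces the problem to $q^2\sum_{u\in\Ku}(TV-2U)_u^2$, where I would use that $TV-2U$ has real coefficients (they are the integer-valued quantities of \cref{Potassium}\ref{kangaroo}). A short computation of the convolution by $T=[1]+[-1]$ gives $(TV)_u = V_u + V_{-u}$, hence $(TV-2U)_u = V_u + V_{-u} - 2U_u$, which completes the identity.
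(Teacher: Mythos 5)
Your proposal is correct, and on parts (i), (ii), and (iv) it is essentially the paper's own proof: the same reductions \(|\Phi|=|W||S|\), \(\sum_u\Phi_u^2=(\Phi\conj{\Phi})_1=q^2(S\conj{S})_1\), and \(\sum_u\Phi_u^4=(\Upsilon\conj{\Upsilon})_1=q^2\sum_u\bigl((TV-2U)_u\bigr)^2\) via \cref{Argon}, \cref{Oxygen}, and \cref{Hydrogen}\ref{gar}, with the same computation \((TV)_u=V_u+V_{-u}\). Part (iii) is where you genuinely diverge. The paper stays inside the group algebra: it writes \(\sum_{u\in\Ku}\Phi_u^2\Omega_u=(\Upsilon\conj{\Omega})_1\), factors this via Lemmas \ref{Argon}, \ref{Sulfur}, and \ref{Oxygen} into \(q^2\,(2TV-2UT)_1=2q^2(V_1+V_{-1}-U_1-U_{-1})\), and then must invoke the combinatorial identity \(U_1=U_{-1}=V_{-1}\) of \cref{Potassium}\ref{wallaby} (which rests on the counting arguments of \cref{Roentgenium}) to collapse the answer to \(2q^2(V_1-V_{-1})\). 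You instead expand pointwise, \(\Phi_u^2\Omega_u=(W_u-W_{-u})(W_u^2-W_{-u}^2)\), reindex by \(u\mapsto-u\) to get \(2\sum_u W_u^3-2\sum_u W_u^2W_{-u}=2|W^{[1,1,1]}|-2|W^{[1,1,-1]}|\), and apply \cref{Silicon}\ref{Lemuel} twice (noting \(1/(-1)=-1\), so the second moment is \(q^2V^{[1,1]}_{-1}=q^2V_{-1}\)), landing on \(2q^2(V_1-V_{-1})\) directly. Both computations check out; your route bypasses \cref{Potassium}\ref{wallaby} and \cref{Roentgenium} entirely and is in fact the same device the paper itself uses to prove the cubic moment in \cref{Chlorine}, while the paper's route buys uniformity (all four parts handled by one coefficient-at-\([1]\) template) and makes visible the symmetric intermediate quantity \(V_1+V_{-1}-U_1-U_{-1}\), at the cost of the extra combinatorial input.
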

\begin{proof}
Recall that \(\Phi\) and \(\Upsilon\) have real coefficients.

To prove the first part, we use Lemmas \ref{Argon} and \ref{Hydrogen}\ref{dace} to get \(|\Phi| = |W| |S| = 0\).
The second part follows from Lemmas \ref{Argon}, \ref{Oxygen}, and \ref{Hydrogen}\ref{gar}, which give us \(
\sum_{u \in \Ku} \Phi_u^2 = (\Phi \conj{\Phi})_1 = (W S \conj{W} \conj{S})_1 = q^2 (S \conj{S})_1 = 2 q^2. 
\)

We can prove the third part by observing that \(\sum_{u \in \Ku} \Phi_u^2 \cdot \Omega_u = (\Upsilon \cdot \conj{\Omega})_1\) and then using Lemmas \ref{Argon}, \ref{Sulfur}, \ref{Oxygen}, and \ref{Potassium}\ref{wallaby} to get that 
\[
(\Upsilon \cdot \conj{\Omega})_1 = \left(W \conj{W} (2 T V - 2 U T)\right)_1 = 2 q^2 (V_1 + V_{-1} - U_1 - U_{-1}) = 2 q^2 (V_1 - V_{-1}). 
\]

The fourth and final part is a consequence of Lemmas \ref{Hydrogen}\ref{gar} (using the fact that all coefficients in our group algebra elements here are real), \ref{Argon} and \ref{Oxygen}, since we have 
\begin{align*}
\sum_{u \in \Ku} \Phi_u^4 & = (\Upsilon \conj{\Upsilon})_1 \\
						  & = \left(W \conj{W} (T V - 2 U) (\conj{T V - 2 U})\right)_1 \\
						  & = q^2 \sum_{u \in \Ku} \left((T V - 2 U)_u\right)^2 \\
						  & = q^2 \sum_{u \in \Ku} (V_u + V_{-u} - 2 U_u)^2. \qedhere
\end{align*}
\end{proof}

\section{Cyclotomic actions on value sets of size four}\label{ACTIONS}

In this section, we examine the action on the value set $\ws$ (see \eqref{William}) of $\tau$, the restriction of the generator $\sigma$ of $\Gal(\Qz/\Q)$ to $\ws$.
We shall prove our main theorem (\cref{MainTheorem}), which is: 
\begin{theorem}\label{Juvenal}
Let $K$ be a finite field and $s$ be an invertible exponent over $K$.
If the Weil spectrum for $K$ and $s$ is $4$-valued, then it is rational unless \(K = \F_5\) and \(s \equiv 3 \pmod{4}\) (in which case \(\ws = \{(5 \pm \sqrt{5})/2, \pm \sqrt{5}\}\)).
\end{theorem}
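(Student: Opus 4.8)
The plan is to assume the Weil spectrum is $4$-valued and leverage the action of $\tau$ on $\ws = \{A_0, A_1, A_2, A_3\}$ together with the moment identities from \cref{PHILIPPE} to force rationality, isolating the exceptional field. By \cref{Simeon}, the common value $m$ (the order of $\tau$, which equals $[\Q(\ws):\Q]$) divides $(p-1)/2$ when $p$ is odd, and $m=1$ when $p \in \{2,3\}$; in the latter case $\ws$ is already rational, so I would assume $p \geq 5$. Since $|\ws|=4$, the cycle type of $\tau$ is a partition of $4$, and by \cref{Naphtali} the permutation $\tau$ cannot be a single $4$-cycle. The surviving cycle types are therefore $1+1+1+1$ (which forces $m=1$ and gives rationality immediately), $2+2$, and $2+1+1$. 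The whole difficulty is confined to the cases where $\tau$ has a nontrivial cycle, i.e.\ where $m$ is even (so $m=2$, since a $k$-cycle needs $k \mid m$ and the cycle lengths are at most $2$ here).

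First I would dispose of the cycle type $2+1+1$. Here $\tau$ fixes two values, say $A_2$ and $A_3$ (so $A_2, A_3 \in \Q$ hence in $\Z$), and swaps $A_0 \leftrightarrow A_1$; the swapped pair are Galois conjugates over $\Q$ generating a quadratic field, which must be $\Q(\sqrt{p})$ by \cref{Simeon} (the extension lies in $\Qz \cap \R$ and has degree $m=2$). By \cref{Benjamin}, write $A_0 = (I + J\sqrt{p})/2$ with $I, J \in \Z$, $I \equiv J \pmod 2$, $v_p(I) \geq 1$, and the stated archimedean bounds $|J| \leq 2(q-1)/(p-1)$. The goal is to show $J=0$. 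The engine is the bilateral symmetrization machinery: since $m=2$ and the relevant element is $\lambda = \gamma^{1-1/s}$ of order $2$, we have $\lambda = -1$, so $\Omega_u = W_u + W_{-u}$ and $\Phi_u = W_u - W_{-u}$ are exactly the symmetrized/antisymmetrized sums of \cref{PHILIPPE}. I would combine the power-moment identities of \cref{Calcium} (giving $\sum \Phi_u^2 = 2q^2$, $\sum \Phi_u^2 \Omega_u = 2q^2(V_1 - V_{-1})$, $\sum \Phi_u^4 = q^2 \sum_u (V_u + V_{-u} - 2U_u)^2$) and \cref{Chlorine} with the frequency constraints from \cref{Judah} (the two swapped values have equal frequency $N_{A_0}=N_{A_1}$, a multiple of $m/k = 1$; the fixed values have frequencies that are multiples of $m/1 = 2$). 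Counting how each $u$ contributes to $\Phi_u = \pm J\sqrt{p}$ or $0$ depending on which value-pair $\{W_u, W_{-u}\}$ lands in, these moment equations become Diophantine relations in $I, J, N_{A_i}$ and the integers $V_1, V_{-1}, \sum_u(\cdots)^2 \geq 0$. The archimedean bounds together with these integrality and nonnegativity constraints should pin down $J=0$ in this case.

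The genuinely hard case is cycle type $2+2$, where $\tau$ swaps $A_0 \leftrightarrow A_1$ and $A_2 \leftrightarrow A_3$, so potentially all four values are irrational and $\Q(\ws) = \Q(\sqrt{p})$. Here each value is of the form $(I_i + J_i\sqrt{p})/2$, with $A_0, A_1$ conjugate (so $I_1 = I_0$, $J_1 = -J_0$) and likewise for the pair $A_2, A_3$. By \cref{Judah} all four frequencies satisfy $N_{A_0} = N_{A_1}$ and $N_{A_2} = N_{A_3}$, each a multiple of $m/k = 1$. I expect this to be the main obstacle: one must now show that the two conjugate parameters $J_0$ and $J_2$ both vanish, and generically the moment identities of \cref{Calcium}, \cref{Chlorine}, and \cref{Silicon} give only finitely many integer solutions. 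The strategy is to write out the first few power moments of $\Omega$ (degrees $0,1,2,3$ from \cref{Chlorine}) and of $\Phi$ (degrees $2, 4$ from \cref{Calcium}) as polynomial equations in $I_0, J_0, I_2, J_2, N_{A_0}, N_{A_2}, q$, supplement them with $\sum_u W_u = q$ and $\sum_u W_u^2 = q^2$ (from \cref{Oxygen}) and $\sum_u W_u^4 = q^2 \sum_u V_u^2$ (from \cref{Silicon}), and impose the $p$-adic constraint $v_p(I_i) \geq 1$, the parity $I_i \equiv J_i \pmod 2$, the nonnegativity of the $Q$-counts and of $\sum_u(V_u + V_{-u} - 2U_u)^2$, and the bounds of \cref{Benjamin}. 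The resulting finite Diophantine system should admit an irrational solution only when $q = p = 5$: there $(p-1)/(p-1) \cdot$ type bounds become tight, $2(q-1)/(p-1) = 2$ forces $|J_i| \leq 2$, and only $q=5$, $s \equiv 3 \pmod 4$ survives, yielding $\ws = \{(5 \pm \sqrt{5})/2, \pm\sqrt{5}\}$ as claimed. Verifying that no other $(q,p)$ escapes — i.e.\ that the moment/bound system is inconsistent for all $q > 5$ — is where the real work lies, and I anticipate it requires splitting on small residual cases after the bounds eliminate all large $q$.
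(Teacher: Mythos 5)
Your case enumeration has a hole: the cycle types of a permutation of a $4$-element set are $1{+}1{+}1{+}1$, $2{+}1{+}1$, $2{+}2$, $3{+}1$, and $4$, and you silently dropped $3{+}1$. Your parenthetical justification (``the cycle lengths are at most $2$ here'') is asserted, not proved: \cref{Dan} only bounds cycle lengths by $(p-1)/2$, which permits a $3$-cycle whenever $p \geq 7$, and \cref{Simeon} shows that $\tau$ of order $3$ forces $p \equiv 1 \pmod{6}$, so this case is in no way vacuous. The paper devotes \cref{Paul} to excluding it, and that exclusion is not soft: it uses the $3$-laterally symmetrized sums $\Omega_u = W_u + W_{\lambda u} + W_{\lambda^2 u}$ for a primitive cube root of unity $\lambda \in \Fpu$, the fact that $\Omega$ is then two-valued with values $X = 3A$ and $Y = B+C+D$, the quadratic moment identity $3q^2 - 3q(X+Y) + (q-1)XY = 0$, a $p$-adic argument giving $v_p(X) < v_p(q) \leq v_p(Y)$ and hence $Y = rq$ with $r \in \{\pm 1, \pm 2\}$, the triple-product identity $\sum_{u} W_u W_{\lambda u} W_{\lambda^2 u} = q^2 V^{[1,\lambda]}_{\lambda}$ from \cref{Silicon} (together with \cref{Phosphorus}) to rule out $X = 0$, and a final divisibility check killing each value of $r$. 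None of this is subsumed by your bilateral ($k=2$) machinery, so your proposal is missing an entire proposition's worth of argument.

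Beyond that omission, the two nontrivial cases you do treat are programs rather than proofs, and your assessment of where the difficulty lies is inverted relative to what actually happens. The $2{+}2$ case (the paper's \cref{Enoch}) closes comparatively quickly: the moments of degree $0$--$2$ force $G=0$, $E=q$, $N_A=1$, and then the cubic moment and $\sum_u \Phi_u^2 = 2q^2$ yield $3F^2 = (q/p)(4V^{[1,1]}_1 - q)$ and $F^2 + 2(q-3)I^2 = q^2/p$, whence $(q/p) \mid F^2, I^2$ and $q \geq 2q-5$, i.e.\ $q \leq 5$. The transposition case ($2{+}1{+}1$), which you hope to dispatch with ``moments plus archimedean bounds plus integrality,'' is in fact the paper's longest argument (\cref{Barnabas}): after the moment bookkeeping it still requires the group-algebra identities for $WT$, $WU$, and $WVT$ evaluated against the level sets $S_B$ and $S_C + S_D$, the invertibility of $W$ (via $W\conj{W} = q^2[1]$, \cref{Oxygen}), a divisibility argument forcing $\beta = 1$ (using $\beta \mid U_u$ together with $\sum_u U_u = 0$ from \cref{Potassium}), the exact $p$-power shapes $B = p^{n-m+2\ell}$ and $E = -\epsilon p^{n-m+\ell}$, the relation $\epsilon^2 + 2p^\ell \epsilon - (p^{m-2\ell}-2) = 0$, and finally the incompatibility of $\ell \mid m$ with $3\ell < m < 4\ell$. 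Nothing in your listed degree-$\leq 4$ moment identities plus the bounds of \cref{Benjamin} visibly produces this contradiction, and note that in this case $J = 0$ contradicts the distinctness of the swapped pair rather than ``giving rationality,'' so your stated goal there is mis-framed. You give no argument that your Diophantine system is inconsistent for all $q > 5$, and the paper's proof strongly suggests it cannot be closed at that level of generality; you would need to supply both the $3$-cycle exclusion and an actual closing argument for the transposition case before this counts as a proof.
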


Suppose \(\ws = \{A,B,C,D\}\), where \(A\), \(B\), \(C\), and \(D\) are distinct. 
Recall that \(\sigma\) is a generator of \(\Gal(\Qz/\Q)\) and that \(\tau\) is the restriction of \(\sigma\) to \(\ws\). 
We saw in \eqref{Rachel} that \(\tau\) always permutes the elements of \(\ws\), so here \(\tau\) must act trivially, as a transposition (while keeping two values fixed), as a composition of two disjoint transpositions, as a \(3\)-cycle (while keeping one value fixed), or as a \(4\)-cycle on the set \(\{A,B,C,D\}\).
We shall address each of the non-trivial actions in the next four propositions, and then finally prove the theorem.
Throughout this section, we shall use the notation (from \eqref{Nora} in the Introduction) where $N^{K,s}_A$ (or simply $N_A$) denotes the frequency of a value $A$ in the Weil spectrum for the field $K$ and the exponent $s$.

\begin{proposition}[{\bf No action as a \(4\)-cycle}] \label{Josiah}
If \(|\ws| = 4\), then \(\tau\) does not permute \(\ws\) as a \(4\)-cycle.
\end{proposition}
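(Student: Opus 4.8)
The plan is to rule out a $4$-cycle by combining the cycle-length restriction from \cref{Simeon} with an arithmetic contradiction derived from the first few power moments of the symmetrized Weil sums. First I would observe that if $\tau$ is a $4$-cycle, then its order is $m=4$, so by \cref{Simeon} we have $p\equiv 1\pmod{2m}=1\pmod 8$, and in particular $p$ is odd and $p\equiv 1\pmod 4$, which puts us in the regime where \cref{Benjamin} and the bilateral machinery of \cref{PHILIPPE} apply. By \cref{Judah}, a $4$-cycle forces $N_A=N_B=N_C=N_D$, each a multiple of $m/k=4/4=1$; call the common frequency $N$, so that $4N=q-1$.

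Next I would extract the action of $\tau$ on $\ws$ explicitly. Writing the $4$-cycle as $\tau\colon A\mapsto B\mapsto C\mapsto D\mapsto A$, the element $\tau^2$ has order $2$ and pairs $\{A,C\}$ and $\{B,D\}$. Since $\tau^2$ is the restriction of $\sigma^2$ and $\Gal(\Qz/\Q(\sqrt p))=\ggen{\sigma^2}$, the values fixed by no nontrivial power but grouped by $\tau^2$ govern membership in $\Q(\sqrt p)$: the sums $A+C$ and $B+D$ are fixed by $\sigma^2$ and hence lie in $\Q(\sqrt p)$, while $A+B+C+D\in\Q$ and each individual value lies in the degree-$4$ field $\Q(\ws)$. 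The key relation $\sigma(W_u)=W_{\Fpgen^{1-1/s}u}$ from \eqref{Rachel} means that bilateral symmetrization (pairing $u$ with $-u$, i.e. applying $\tau^{m/2}=\tau^2$) sends each value to its partner under $\tau^2$, so $W_u+W_{-u}$ takes only the two values $A+C$ and $B+D$, each attained $2N$ times among $u\in\Ku$. This is exactly the setting of $\Omega_u=W_u+W_{-u}$ from \cref{TOR}.

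The heart of the argument is then a moment computation for $\Omega$. From \cref{Chlorine} I have $\sum_u\Omega_u=2q$, $\sum_u\Omega_u^2=2q^2$, and $\sum_u\Omega_u^0=q-1$. If $\Omega_u$ takes only the two values $P=A+C$ and $R=B+D$, each with multiplicity $2N=(q-1)/2$, these three moment equations become a small linear/quadratic system in $P$ and $R$ that I can solve; combined with $P,R\in\Q(\sqrt p)$ and $P+R=A+B+C+D\in\Z$ and the integrality constraints from \cref{Benjamin} (each value is $(I+J\sqrt p)/2$ with the stated bounds on $I,J$), I expect to force $P,R$ into a rigid form. The contradiction should emerge by pushing to the third moment via \cref{Chlorine}(iv), $\sum_u\Omega_u^3=2q^2(V^{[1,1]}_1+3V^{[1,1]}_{-1})$, and/or by using \cref{Calcium} for the antisymmetrized sum $\Phi_u=W_u-W_{-u}$, whose second moment $\sum_u\Phi_u^2=2q^2$ and fourth moment are pinned down; the parity and divisibility facts in \cref{Potassium} and \cref{Roentgenium}(iii) then clash with $4N=q-1$ when $p\equiv 1\pmod 8$.

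The main obstacle I anticipate is organizing the interplay between the two quadratic conjugacy levels: a genuine $4$-cycle requires that the values are \emph{not} already in $\Q(\sqrt p)$ (they generate the full degree-$4$ field), yet the bilateral sums $P,R$ \emph{are} in $\Q(\sqrt p)$, and I must show these compatibility conditions together with the moment identities admit no solution with $4N=q-1$ and the archimedean/$p$-adic bounds of \cref{Benjamin}. The delicate point is likely to be squeezing enough information out of the second and third moments—rather than just the mean and variance—to eliminate the $4$-cycle without a case analysis on $q$; I would be prepared to invoke $v_p(W_u)>0$ from \cref{Issachar} and the sharper vanishing criterion of \cref{Zebulun} to control the $p$-adic sizes of $P$ and $R$ and thereby close the gap.
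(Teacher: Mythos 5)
Your proposal is correct in outline and takes a genuinely different route from the paper, which disposes of this case in one line: \cref{Josiah} is a special case of \cref{Naphtali}, proved earlier, which shows that $\tau$ is never a single cycle unless $K=\F_2$ (where $|\ws|=1\neq 4$). Interestingly, the proof of \cref{Naphtali} is the same kind of argument you set up, but applied directly to $W$ rather than to the symmetrization $\Omega$: equal frequencies from \cref{Judah} plus $\sum_{u\in\Ku}W_u=q$ give $kN_{A_0}=q-1$ and $N_{A_0}(A_0+\cdots+A_{k-1})=q$ with $A_0+\cdots+A_{k-1}\in\Z$, forcing $N_{A_0}=1$ and $k=q-1$, which contradicts the bound $k\leq(p-1)/2$ of \cref{Dan}. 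Your symmetrized version also works, and in fact it closes much earlier than you anticipate: once you have $p\equiv 1\pmod 8$ from \cref{Simeon}, $N_A=N_B=N_C=N_D=(q-1)/4$ from \cref{Judah}, and the observation (correct, since $\lambda$ has order $4$ so $\lambda^2=-1$ and $\tau^2(W_u)=W_{-u}$) that $\Omega_u=W_u+W_{-u}$ takes each of $P=A+C$ and $R=B+D$ exactly $(q-1)/2$ times, the \emph{first} moment of \cref{Chlorine} alone finishes the job:
\[
\frac{q-1}{2}\,(P+R)=\sum_{u\in\Ku}\Omega_u=2q,
\]
and $P+R=A+B+C+D$ is an algebraic integer fixed by $\sigma$, hence lies in $\Z$; since $\gcd(q,q-1)=1$, this forces $q-1\mid 4$, so $q\leq 5$, contradicting $p\equiv 1\pmod 8$ (which requires $p\geq 17$). (If it happened that $P=R$, the same computation gives $q-1\mid 2$, again a contradiction.) So the third and fourth moments, \cref{Calcium}, \cref{Potassium}, \cref{Roentgenium}, and the bounds you flag as the delicate part are all unnecessary, and you should state the contradiction rather than deferring it to higher moments whose success you only conjecture; note also that \cref{Benjamin} as stated applies to individual Weil sums lying in $\Q(\sqrt{p})$, which is not the situation here (the values generate a degree-$4$ field, and $\Omega_u$ is not itself a Weil sum), so that part of your anticipated toolkit would not transfer verbatim anyway. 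Comparing the two approaches: the paper's is lighter (no group algebra, no symmetrization) and rules out single cycles for every size of $\ws$ at once, while yours is specific to $|\ws|=4$ but previews exactly the symmetrization technique the paper deploys for the $3$-cycle and transposition cases.
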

\begin{proof}
This follows from \cref{Naphtali}, since \(|\ws| = 1\) when \(K = \F_2\).
\end{proof}

\begin{proposition}[{\bf No action as a \(3\)-cycle}] \label{Paul}
If \(|\ws| = 4\), then \(\tau\) does not permute \(\ws\) as a \(3\)-cycle (while fixing one value).
\end{proposition}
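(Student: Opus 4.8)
Suppose for contradiction that $\tau$ permutes $\ws=\{A,B,C,D\}$ as a $3$-cycle on $\{A,B,C\}$ while fixing $D$. Then $\tau$ has order $m=3$, so \cref{Simeon} gives $p\equiv 1\pmod 6$ (hence $p\ge 7$), and $D$, being fixed by $\sigma$, lies in $\Q\cap\Zz=\Z$. By \cref{Judah} the frequencies satisfy $N:=N_A=N_B=N_C$ and $3\mid N_D$, with $N\ge 1$ and $N_D\ge 3$. I would begin from the two linear identities $3N+N_D=q-1$ (counting $\Ku$) and $N(A+B+C)+N_D D=q$, the second coming from $\sum_{u\in\Ku}W_u=q$.

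The crux is a quadratic identity. With $\lambda=\Fpgen^{1-1/s}$ of order $3$ (\cref{Simeon}) and $W_{\lambda u}=\tau(W_u)$ from \eqref{Rachel}, I would sum the off-diagonal relations $\sum_{v\in\Ku}W_{uv}W_v=0$ (for $u\ne 1$), which are the non-identity coefficients of $W\conj{W}=q^2[1]$ (\cref{Oxygen}), over $u\in\{\lambda,\lambda^2\}$. The fixed value contributes $2N_D D^2$ and the $3$-cycle contributes $2Ne_2$, where $e_2=AB+BC+CA$, so $Ne_2+N_D D^2=0$. Subtracting this from the identity coefficient $N(A^2+B^2+C^2)+N_D D^2=q^2$ yields $NP=q^2$, where $P=A^2+B^2+C^2-e_2$. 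Thus $N=p^j$ and $P=p^{2n-j}$; since $3N\le q-4$ forces $j\le n-1$, we get $P\ge p^{n+1}>q$. Moreover $e_2=-N_D D^2/N\le 0$, with equality exactly when $D=0$.

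To exploit $NP=q^2$ I would describe $A,B,C$ via Gaussian periods. By \cref{Asher} each value is $\sum_i w_i\zeta^i$ with $w_i\in\N$ summing to $q$; as $A,B,C$ lie in the degree-$3$ subfield of $\Qz$ fixed by $\sigma^3$ (which exists since $p\equiv 1\pmod 3$), a Galois argument like that of \cref{Joseph} makes the $w_i$ constant on the three cosets of the index-$3$ subgroup of $\Fpu$. Hence $A=w_0+c_0P_0+c_1P_1+c_2P_2$ with $c_i\in\N$ and cubic periods $P_i$, and $B,C$ are its $\sigma$-conjugates. Using the standard period relations $\sum_iP_i=-1$, $\Tr(P_i^2)=(2p+1)/3$, and $\Tr(P_iP_k)=-(p-1)/3$ (whose difference is $p$), I would obtain, writing $T=c_0+c_1+c_2$, that $A+B+C=3q-pT$ and $3(A^2+B^2+C^2)-(A+B+C)^2=p(3\sum_i c_i^2-T^2)$; combined with $NP=q^2$ this gives $3\sum_i c_i^2-T^2=2P/p$. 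Nonnegativity of the $c_i$ gives $\sum_i c_i^2\le T^2$, hence $P\le pT^2$, while $e_2\le 0$ gives $(A+B+C)^2=P+3e_2\le P$. Together $(3q-pT)^2\le pT^2$, which forces $T\ge 3q/(p+\sqrt{p})$.

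When $D=0$ the identities of the second step force $N=1$ and $A+B+C=q$, i.e.\ $T=2p^{n-1}$; but $2p^{n-1}<3q/(p+\sqrt{p})$ as soon as $p>4$, contradicting the bound just derived. The case $D\ne 0$, possible only for $n\ge 2$, is the main obstacle: now $T$ may lie in the admissible range, so the contradiction must come from over-determination of $D$. Here $D$ must simultaneously be the rational integer $(q-N(A+B+C))/N_D$ and satisfy $N_D D^2=-Ne_2$; I would combine these with the integrality of $\sum_i c_i^2=(T^2+2P/p)/3$ (which forces $3\nmid T$) and with the period expansion of $A$ to show that no admissible nonnegative tuple $(c_0,c_1,c_2)$, and hence no integer $D$, can satisfy all the constraints at once, completing the proof.
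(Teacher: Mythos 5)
Your preparatory identities are all correct: the linear relations $3N+N_D=q-1$ and $N(A+B+C)+N_D D=q$, the extraction of $Ne_2+N_D D^2=0$ from the off-identity coefficients of $W\conj{W}=q^2[1]$, and the consequent $NP=q^2$ with $P=A^2+B^2+C^2-e_2>0$ repackage exactly the information the paper gets from the $3$-laterally symmetrized moments (\cref{Chlorine} with $k=3$, leading to \eqref{East}). Your cubic-period computations also check out: with $A=w_0+c_0P_0+c_1P_1+c_2P_2$ one does get $A+B+C=3q-pT$ and $2P=p\left(3\sum_i c_i^2-T^2\right)$, hence $T\geq 3q/(p+\sqrt{p})$, and your disposal of the subcase where the fixed value vanishes is valid and genuinely different from the paper's: there $e_2=0$ forces $N=1$ and $T=2q/p$, contradicting the lower bound since $p\geq 7$, whereas the paper rules out this subcase (\cref{Gimel}) with the third-moment identity $\sum_{u\in\Ku}W_uW_{\lambda u}W_{\lambda^2 u}=q^2V^{[1,\lambda]}_\lambda$ and the bound $V^{[1,\lambda]}_\lambda\geq -1$ from \cref{Phosphorus}. (Your parenthetical that $D\neq 0$ requires $n\geq 2$ is also right, via \cref{Issachar} and \cref{Zebulun} with nondegeneracy from \cref{Tor}, though you should say why.)

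However, the proposition is not proved, because the case $D\neq 0$ --- which you yourself call ``the main obstacle'' --- is only a plan, not an argument. You list constraints (integrality of $D=(q-N(A+B+C))/N_D$, the relation $N_DD^2=-Ne_2$, integrality of $\sum_i c_i^2=(T^2+2P/p)/3$ forcing $3\nmid T$, nonnegativity of the $c_i$) and then simply assert they are jointly unsatisfiable; no contradiction is exhibited, and it is not at all evident that these conditions alone exclude solutions, since beyond $NP=q^2$ they carry little $p$-adic leverage. This missing case is precisely the bulk of the paper's proof (Steps \ref{Bet-2} through the final step): from the quadratic identity and \cref{Zebulun} the paper deduces, in your labeling, that $v_p(3D)<v_p(q)\leq v_p(A+B+C)$; the archimedean bound of \cref{Asher} then gives $0<|A+B+C|<3q$, pinning the cycle sum to $rq$ with $r\in\{\pm 1,\pm 2\}$; and each $r$ is eliminated because $3D=3q(r-1)/((q-1)r-3)$ (cf.\ \eqref{South}) is never a rational integer divisible by $3$ when $q$ is a power of a prime $p\geq 7$. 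Since your three identities are equivalent to the paper's moment system, you could complete your proof by eliminating $N$ and $N_D$ to recover \eqref{East} and running this $p$-adic/archimedean endgame; as written, though, your argument settles only the $D=0$ subcase (and hence only $q=p$ in full), leaving $n\geq 2$ with $D\neq 0$ open.
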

Let \(|\ws| = 4\).
Assume that \(\tau\) permutes $\ws$ as a $3$-cycle to show a contradiction, so we write \(\ws=\{A,B,C,D\}\) and \(\tau=(A)(B C D)\), i.e., $\tau(A)=A$, $\tau(B)=C$, $\tau(C)=D$, and $\tau(D)=B$.
For clarity, we break the proof into steps. 
\begin{step}\label{Aleph-1}
The exponent \(s\) is nondegenerate.
\end{step}
\begin{proof}\let\qed\relax
This is from \cref{Tor}.
\end{proof}
\begin{step}\label{Aleph-2}
We have \(p \equiv 1 \pmod{6}\), so \(p \geq 7\), and there is a primitive third root of unity \(\lambda\in\Fpu\) such that \(\tau(W_u) = W_{\lambda u}\) for all \(u \in \Ku\).
\end{step}
\begin{proof} \let\qed\relax
This is from \cref{Simeon} and \eqref{Rachel}, since  \(\tau\) has order \(3\).
\end{proof}
\begin{step}\label{Aleph-3}
We have \(3 \mid N_A\) and \(N_B = N_C = N_D\).
\end{step}
\begin{proof}\let\qed\relax
This is from \cref{Judah}, since  \(\tau\) has order \(3\),  permutes \(A\) in a \(1\)-cycle, and permutes \(B, C, D\) in a \(3\)-cycle.
\end{proof}
\begin{step} \label{Bet-1}
Let \(X = 3 A\) and \(Y = B + C + D\). 
Then \(X\) and \(Y\) are rational integers with \(3\mid X\) and
\begin{equation}\label{East}
3 q^2 - 3 q(X+Y) + (q-1)X Y = 0.
\end{equation}
\end{step}
\begin{proof} \let\qed\relax
We know that \(A\) and \(Y\) are in \(\Z\) because $\sigma$ (of which $\tau$ is a restriction) fixes both of these algebraic integers.
Thus, \(X\) is a rational integer with \(3\mid X\).
By \cref{Aleph-2}, we can let \(\Omega_u = W_u + W_{\lambda u} + W_{\lambda^2 u} = W_u + \tau(W_u) + \tau^2(W_u)\) for all \(u \in \Ku\), as in \cref{TOR} (with \(k=3\)).
Notice that \(\Omega_u\) only assumes two values as \(u\) runs through \(\Ku\), namely \(X = 3 A\) (\(N_A\) times) and \(Y = B + C + D\) (\(3 N_B\) times by \cref{Aleph-3}).
This means that   
\(
\sum_{u \in \Ku} (\Omega_u - X)(\Omega_u - Y) = 0,
\)
so we obtain \eqref{East} from the first three results in \cref{Chlorine}. 
\end{proof}
\begin{step}\label{Bet-2}
We have \(\max\{v_p(X), v_p(Y)\} \geq v_p(q)\).
\end{step}
\begin{proof}\let\qed\relax
If \(\max\{v_p(X), v_p(Y)\} < v_p(q)\), then \(v_p((q-1) X Y) < v_p(3 q^2 - 3 q (X + Y))\), contradicting \eqref{East} in \cref{Bet-1}.
\end{proof}
\begin{step}\label{Gimel}
We have \(0 \notin \{X,Y\}\).
\end{step}
\begin{proof}\let\qed\relax
We assume \(0 \in \{X,Y\}\) to show contradiction.
Then \(\{X,Y\}=\{0,q\}\) by \eqref{East}.
Now \(q\) is a power of the prime \(p\) with \(p \geq 7\) (by \cref{Aleph-2}), but \(X\) is a rational integer with \(3\mid X\) (by \cref{Bet-1}), so we cannot have \(X=q\).
Thus, \(X = 3 A = 0\) and \(Y = B + C + D = q\).
Since \(s\) is nondegenerate by \cref{Aleph-1}, we have \(|B|, |C|, |D| < q\) by \cref{Asher}, so that $B+C+D=q$ makes at least two of $B$, $C$, $D$ positive, while \cite[Corollary 2.3]{Aubry-Katz-Langevin} makes at least one negative, and so  $B C D < 0$.
Now \cref{Silicon}\ref{Lemuel} gives us $|W^{[1,\lambda,\lambda^2]}|=q^2 V^{[1,\lambda]}_\lambda$, that is,
$\sum_{u \in \Ku} W_u W_{\lambda u} W_{\lambda^2 u}=q^2 V^{[1,\lambda]}_\lambda$.
Recalling the relation involving $\tau$ and $\lambda$ from \cref{Aleph-2}, this means that $\sum_{u \in \Ku} W_u \, \tau(W_u) \, \tau^2(W_u)=q^2 V^{[1,\lambda]}_\lambda$.
Then in view of the fact that \(\ws=\{A,B,C,D\}\) with $\tau(B)=C$, $\tau(C)=D$, and $\tau(D)=B$, and since we have shown that \(A=0\) here and $N_B=N_C=N_D$ in \cref{Aleph-3}, we have $3 N_B B C D = q^2  V^{[1,\lambda]}_\lambda$.  Since $B C D < 0$, \cref{Phosphorus} forces $V^{[1,\lambda]}_\lambda = -1$, and hence $3 N_B B C D = -q^2$.  But \(B C D \in \Z\) since it is an algebraic integer fixed by \(\tau\), which is a restriction of \(\sigma\), the generator of $\Gal(\Qz/\Q)$.
This means that \(3 \mid q^2\), contradicting $p \geq 7$ from \cref{Aleph-2}.
\end{proof}
\begin{step}\label{Dalet-1}
We have \(v_p(X) < v_p(q)\) and \(v_p(Y) \geq v_p(q)\).
\end{step}
\begin{proof}\let\qed\relax
Recall from \cref{Bet-1} that \(X=3 A\).
Therefore, by \cref{Aleph-2} we have \(v_p(X) = v_p(3 A) = v_p(A)\), and then by \cref{Gimel} and \cref{Zebulun}, we know that \(v_p(A) < v_p(q)\).
Thus \(v_p(X) < v_p(q)\) and so by \cref{Bet-2} we know that \(v_p(Y) \geq v_p(q)\).
\end{proof}
\begin{step}\label{Dalet-2}
We have \(Y = r q\) for some \(r \in \{\pm 1, \pm 2\}\).
\end{step}
\begin{proof}\let\qed\relax
Recall from \cref{Bet-1} that \(Y=B+C+D\).
By Steps \ref{Gimel} and \ref{Aleph-1} combined with \cref{Asher}, we know that \(0 < |Y| = |B+C+D| < 3 q\).
Thus, \(0 < |Y| < p q\) by \cref{Aleph-2}.
Now \cref{Bet-1} shows that \(Y \in \Z\), so by \cref{Dalet-1} we have \(v_p(Y) = v_p(q)\).
Then $Y=r q$ for some $r \in \Z$ and recall that $0 < |Y| < 3 q$.
\end{proof}
\begin{step}
We conclude that \(\tau\) does not permute \(\ws\) as a \(3\)-cycle.
\end{step}
\begin{proof}
We rule out each of the four possible values of $r$ in \cref{Dalet-2} using the following formula for \(X \in \Z\), which comes from \eqref{East} and \(Y=r q\):
\begin{equation}\label{South}
X = \frac{3 q(r-1)}{(q-1)r - 3}
\end{equation}
(note that the denominator is not zero because  \(q-1 \geq p-1 \geq 6\) by \cref{Aleph-2}).
 
If \(r=1\), then \(X=0\), which contradicts \cref{Gimel}.
If $r=-2$ (resp., $-1$, $2$), then \eqref{South} becomes $4+(q-4)/(2 q+1)$ (resp., $6-12/(q+2)$, $1+(q+5)/(2 q-5)$).
None of these expressions can be a rational integer, since $q$ is a power of some prime $p \geq 7$ by \cref{Aleph-2}, so \cref{Bet-1} is contradicted.
\end{proof}

\begin{proposition}[{\bf Action as a composition of two disjoint \(2\)-cycles}] \label{Enoch}
The following are equivalent:
\begin{enumerate}[label = (\roman*)]
\item \(|\ws| = 4\) and \(\tau\) permutes \(\ws\) as a composition of two disjoint transpositions;
\item \(q = 5\) and \(s \equiv 3 \pmod{4}\).
\end{enumerate}
When these hold, \(\ws = \{(5 \pm \sqrt{5})/2, \pm \sqrt{5}\}\).
\end{proposition}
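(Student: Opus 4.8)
The plan is to prove the two implications separately, handling (ii)$\Rightarrow$(i) by direct evaluation and concentrating on (i)$\Rightarrow$(ii). For (ii)$\Rightarrow$(i), observe that over $\F_5$ every exponent with $s\equiv 3\pmod 4$ is equivalent to $s=3$ and so yields the same spectrum; I would simply compute $W_u=\sum_{x\in\F_5}\zeta^{x^3-ux}$ for $u\in\{1,2,3,4\}$ (here $\Tr$ is the identity and $x\mapsto x^3$ merely transposes $2$ and $3$), obtaining $\ws=\{(5\pm\sqrt 5)/2,\pm\sqrt 5\}$. Since $\sigma$ restricts to the conjugation $\sqrt 5\mapsto-\sqrt 5$ on $\Q(\sqrt 5)$, the permutation $\tau$ is the product of the transpositions $(5+\sqrt 5)/2\leftrightarrow(5-\sqrt 5)/2$ and $\sqrt 5\leftrightarrow-\sqrt 5$. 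This settles (ii)$\Rightarrow$(i) and the closing assertion of the proposition at once.

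For (i)$\Rightarrow$(ii) I would first fix the number-theoretic framework. If $\tau$ is a product of two disjoint transpositions then it has order $2$, so \cref{Simeon} gives $m=2$, forces $p\equiv 1\pmod 4$, and makes $\Fpgen^{1-1/s}=-1$; hence \eqref{Rachel} reads $\tau(W_u)=W_{-u}$, while $\Q(\ws)=\Q(\sqrt p)$ and $\tau$ is the nontrivial automorphism $\sqrt p\mapsto-\sqrt p$. As $|\ws|=4$ the exponent is nondegenerate (\cref{Tor}), so \cref{Benjamin} writes each value as $(I+J\sqrt p)/2$ with $I\equiv J\pmod 2$, $v_p(I)\geq 1$, and with the archimedean bounds on $I,J$. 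Because $\tau$ carries each value to its Galois conjugate, the two transpositions pair conjugates, so the four values are $A=(I_1+J_1\sqrt p)/2$, $\bar A$, $C=(I_2+J_2\sqrt p)/2$, $\bar C$ with $J_1,J_2\neq 0$; writing $a=N_A=N_{\bar A}$ and $c=N_C=N_{\bar C}$ (these equalities hold by \cref{Judah}) I have $a,c\geq 1$ and $2a+2c=q-1$.

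The engine is that $\Omega_u=W_u+W_{-u}$ equals $I_1$ or $I_2$ and $\Phi_u=W_u-W_{-u}$ equals $\pm J_1\sqrt p$ or $\pm J_2\sqrt p$ according to which value $W_u$ is. Feeding this into \cref{Chlorine} and \cref{Calcium} converts the problem into a Diophantine system: the first two moments of $\Omega$ and the second moment of $\Phi$ give $aI_1+cI_2=q$, $aI_1^2+cI_2^2=q^2$, and $p(aJ_1^2+cJ_2^2)=q^2$. One checks that $I_1=I_2$ is impossible (it forces $I_1=2q/(q-1)\notin\Z$ for $q\geq 5$), and the first two relations combine into the factorization
\[
\bigl((q-1)I_1-2q\bigr)\bigl((q-1)I_2-2q\bigr)=-2q^2(q-3),
\]
together with $a\bigl((q-1)I_1-2q\bigr)+c\bigl((q-1)I_2-2q\bigr)=0$. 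Using $p\mid I_j$, the bounds of \cref{Benjamin}, and the residue $-2$ of each factor modulo $q-1$, this confines $(I_1,I_2,a,c)$ to a short explicit list of candidates. When $q=p$ the bounds already force $\{I_1,I_2\}=\{0,p\}$, the pair with $I=p$ having frequency $1$ and the pair with $I=0$ having frequency $(p-3)/2$; the parity condition $I\equiv J\pmod 2$ then forces $J^2=4$ on the even pair and $J^2=1$ on the odd pair (both extremal since $|J|\leq 2$), whereupon $p(aJ_1^2+cJ_2^2)=q^2$ becomes $2(p-3)+1=p$ and yields $p=5$.

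The difficulty, and the step I expect to be the main obstacle, is eliminating the candidates when $q=p^n$ with $n\geq 2$: the three relations above are satisfied by a spurious family such as $I_1=q$, $I_2=0$, $a=1$, $c=(q-3)/2$, which the second and third moments can never kill. Here I would invoke the higher moments, using $\sum_u\Omega_u^3$ (\cref{Chlorine}) and $\sum_u\Phi_u^2\Omega_u$ (\cref{Calcium}) to read off $V_1+3V_{-1}$ and $V_1-V_{-1}$ for the candidate, and then exploit that $V_1$ and $V_{-1}=U_1$ are \emph{nonnegative integers} (\cref{Potassium}) with the parity controlled by \cref{Roentgenium}. For the family above these identities give $V_1+3V_{-1}=q$ and $V_1-V_{-1}=J_1^2/p^{\,n-1}$, so with $J_2=2k$ one gets $V_{-1}=(q-3)k^2/(2p^{\,n-1})$; the integrality of $V_{-1}$ forces $p^{\,n-1}\mid k^2$, hence $k^2\geq p^{\,n-1}$, which collides with the bound $k^2\leq p^{\,2n-1}/(2(q-3))$ coming from $J_1^2\geq 0$ to give $q\leq 6$, impossible for $n\geq 2$. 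The remaining candidates (those with both $I_1,I_2$ nonzero, which I expect to occur only for $n\geq 3$) must be dispatched the same way, combining the integrality of $V_1-V_{-1}$ and of $\sum_u(V_u+V_{-u}-2U_u)^2$ (\cref{Calcium}) with the nonnegativity and parity of $U$ and $V$; the expectation is that the $p$-adic valuation and the archimedean bounds reduce this to a finite, if intricate, case analysis whose only survivor is $q=5$.
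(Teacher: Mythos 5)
Your reduction and the computations you actually carried out are correct and sit squarely inside the paper's framework: the moment identities come from \cref{Chlorine}, \cref{Calcium}, and \cref{Silicon}, the bounds and parity from \cref{Benjamin}, the frequency equalities from \cref{Judah}, and your factorization \(\bigl((q-1)I_1-2q\bigr)\bigl((q-1)I_2-2q\bigr)=-2q^2(q-3)\) is an equivalent form of the paper's identity \eqref{Magellanic}. The \((ii)\Rightarrow(i)\) computation over \(\F_5\), the \(q=p\) case, and the elimination of the family \(I_1=q\), \(I_2=0\), \(a=1\) for \(n\ge 2\) all check out. But there is a genuine gap exactly where you flag it: the claim that \(p\mid I_j\), the archimedean bounds, and the residue \(-2\) of each factor modulo \(q-1\) ``confine \((I_1,I_2,a,c)\) to a short explicit list'' is never substantiated, and the candidates with both \(I_1,I_2\) nonzero are never eliminated---you defer them to an ``expectation'' that valuations and parity will work out. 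For \(q=p^n\) these divisor and congruence constraints do not visibly yield a finite list uniform in \(n\), so as written the argument is incomplete at its crux.

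The paper closes this hole \emph{before} any enumeration, with a short \(p\)-adic valuation argument (its \cref{Gamma-1}). Writing the rational parts as \(E,G\) with \(v_p(E)\le v_p(G)\), the zeroth, first, and second moments combine into \(0=EG(q-1)/2-(E+G)q+q^2\), i.e.\ \eqref{Magellanic}; if \(v_p(G)<v_p(q)\) the first term has strictly smaller valuation than the other two, and if \(v_p(E)>v_p(q)\) the term \(q^2\) does, so \(v_p(E)\le v_p(q)\le v_p(G)\) and \(q\mid G\); since \eqref{Gentoo} forces \(G^2<q^2\), this gives \(G=0\), and then \(E=q\), \(N_A=1\), \(N_C=(q-3)/2\) for \emph{every} \(q\) at once---there are simply no ``both nonzero'' candidates, and no case split among \(q=p\), \(n\ge 2\), \(n\ge 3\) is needed. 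After that, the paper's \eqref{Royal} and \eqref{Rockhopper} (your third-moment step in disguise: \(3F^2=(q/p)(4V^{[1,1]}_1-q)\) with \(V^{[1,1]}_1\in\Z\), together with \(F^2+2(q-3)(H/2)^2=q^2/p\)) give \((q/p)\mid F^2\) and \((q/p)\mid (H/2)^2\), whence \(q\ge 1+2(q-3)=2q-5\) and \(q\le 5\) uniformly; your \(q\le 6\) computation for the surviving family is essentially this same divisibility argument specialized to \(n\ge 2\). Importing the valuation step that forces \(G=0\) would complete your proof with no new machinery.
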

Suppose that $q=5$ and $s \equiv 3 \pmod{4}$.
In fact, we may assume \(s = 3\) since \(\cW_{K, s'} = \cW_{K, s''}\) if \(s' \equiv s'' \pmod{q-1}\) (see the definition of equivalent exponents in \cref{INTRODUCTION}). 
Let \(\zeta = e^{2 \pi i/5}\).
The polynomial \(x^3 - x\) represents \(0\) thrice and each of \(\pm 1\) only once over \(K = \F_5\), and so \(\weil_1 = 3 + (\zeta + \zeta^{-1}) = (5 + \sqrt{5})/2\) by \cref{Joseph}. 
Similarly, \(x^3 - 2 x\) represents \(0\) once and each of \(\pm 1\) twice over \(K\); \(x^3 - 3 x\) represents \(0\) once and each of \(\pm 2\) twice; and \(x^3 - 4 x\) represents \(0\) three times and each of \(\pm 2\) once over \(K\), so we can calculate that \(\weil_2 = \sqrt{5}\), \(\weil_3 = - \sqrt{5}\), and \(\weil_4 = (5 - \sqrt{5})/2\).
Then since $\sigma(\sqrt{5})=-\sqrt{5}$ (because $\sigma$ restricts to the generator of $\Gal(\Q(\sqrt{5})/\Q)$), it is clear that $\tau$ acts on $\ws$ as a product of two disjoint transpositions.

Now suppose that \(|\ws| = 4\) and suppose that \(\tau\) acts on \(\ws\) as a composition of two disjoint transpositions.
For clarity, the remainder of the proof is broken up into steps.
\setcounter{step}{0}
\renewcommand\theHstep{Enoch.\arabic{step}}
\begin{step}\label{Alpha-1}
We have \(p \equiv 1 \pmod{4}\), so \(p\geq 5\), and \(\tau(W_u) = W_{-u}\) for all \(u \in \Ku\).
\end{step}
\begin{proof} \let\qed\relax
This is from \cref{Simeon} and \eqref{Rachel}, since  \(\tau\) has order \(2\).
\end{proof}
\begin{step}\label{Alpha-2}
We write $\ws=\{A,B,C,D\}$ with
\[
A = \frac{E + F \sqrt{p}}{2}, B = \frac{E - F \sqrt{p}}{2}, C = \frac{G + H \sqrt{p}}{2}, \text{ and } D = \frac{G - H \sqrt{p}}{2},
\]
where \(E, F, G, H \in \Z\) with \(E \equiv F \pmod{2}\), \(G \equiv H \pmod{2}\), $v_p(E) \leq v_p(G)$, and $\tau=(A B)(C D)$, i.e., \(\tau(A) = B\), \(\tau(B) = A\), \(\tau(C) = D\), and \(\tau(D) = C\).
\end{step}
\begin{proof} \let\qed\relax
Since \(\tau\) has order \(2\) and since \cref{Alpha-1} tells us that $p\equiv 1 \pmod{4}$, \cref{Simeon} shows that the elements of $\ws$ are algebraic integers in \(\Q(\sqrt{p})\), the unique degree \(2\) extension of \(\Q\) that lies in \(\Qz\).
Thus, each element of $\ws$ has the form described in \cref{Benjamin}, and the four elements consist of two pairs of Galois conjugates because of the action of $\tau$.
This establishes the existence of the integers $E$, $F$, $G$, and $H$ which are used to describe our four elements of $\ws$ above (making sure to arrange so that $v_p(E) \leq v_p(G)$), and we also name the elements $A$, $B$, $C$, and $D$ as above, so that the Galois conjugate pairs are $\{A,B\}$ and $\{C,D\}$; this means that $\tau$ must act as $(A B)(C D)$.
\end{proof}
\begin{step}\label{Alpha-3}
We have \(N_A = N_B\) and \(N_C = N_D\). 
\end{step}
\begin{proof} \let\qed\relax
This is due to \cref{Judah} since $\tau=(A B)(C D)$ from \cref{Alpha-2}.
\end{proof}
\begin{step} 
We have the following equations:
\begin{align}
\label{Adelie} N_A 			& + N_C = \frac{q-1}{2} \\
\label{Chinstrap} N_A E 		& + N_C G = q \\
\label{Gentoo} N_A E^2 		& + N_C G^2 = q^2 \\
\label{Humboldt} N_A F^2 p 	& + N_C H^2 p = q^2 \\
\label{Emperor} N_A (E^3+ 3 p E F^2) & + N_C (G^3 + 3 p G H^2) = 4 q^2 V^{[1,1]}_1.
\end{align}
\end{step}
\begin{proof}\let\qed\relax
For \(u \in \Ku\), let \(\Omega_u = W_u + W_{-u}\) and \(\Phi_u = W_u - W_{-u}\).
This is consistent with the notation we introduced in \cref{TOR} (with \(k=2\)) and in \cref{PHILIPPE} since \(p \equiv 1 \pmod{2}\) by \cref{Alpha-1}.
Thus, by \cref{Alpha-1}, we have \(\Omega_u = W_u + \tau(W_u)\) and \(\Phi_u = W_u - \tau(W_u)\) for every \(u \in \Ku\).

As we run through \(u \in \Ku\), Steps \ref{Alpha-2} and \ref{Alpha-3} tell us that \(\Omega_u\) has 
\begin{align*}
2 N_A 	& \text{ instances of } E  && 2 N_C \text{ instances of } G
\end{align*}
while \(\Phi_u\) has 
\begin{align*}
N_A		& \text{ instances of } F \sqrt{p}  && N_A \text{ instances of } -F \sqrt{p} \\
N_C		& \text{ instances of } H \sqrt{p}  && N_C \text{ instances of } -H \sqrt{p},
\end{align*}
so that \eqref{Adelie}, \eqref{Chinstrap}, and \eqref{Gentoo} follow from parts \ref{alpaca}, \ref{buffalo}, and \ref{caribou} of \cref{Chlorine} and \eqref{Humboldt} follows from \ref{Calcium}\ref{gnu}. 
The left-hand side of \cref{Silicon}\ref{Lemuel} (with \(t_1 = t_2 = t_3 = 1\)) is summing $W_u^3$ over all $u \in \Ku$, and since $N_A=N_B$ and $N_C=N_D$ by \cref{Alpha-3}, we obtain \eqref{Emperor}.
\end{proof}
\begin{step}\label{Gamma-1}
We have \(G=0\).
\end{step}
\begin{proof}\let\qed\relax
Add \(E G\) times \eqref{Adelie} and \(-(E+G)\) times \eqref{Chinstrap} to \eqref{Gentoo} to get
\begin{equation}\label{Magellanic}
0 = E G \left(\frac{q-1}{2}\right) - (E+G) q + q^2.
\end{equation}
Recall from \cref{Alpha-2} that $v_p(E) \leq v_p(G)$.
Note that if either \(v_p(G) < v_p(q) \) or \(v_p(E) > v_p(q)\), then one of the terms in \eqref{Magellanic} would have strictly lower \(p\)-adic valuation than the other terms. 
Thus, \(v_p(E) \leq v_p(q) \leq v_p(G)\), so that \(E \neq 0\) and \(q \mid G\).
On the other hand, since \(N_A, N_C \in \Z_+\), \eqref{Gentoo} tells us that  
\[
q^2 = N_A E^2 + N_C G^2 > N_C G^2 \geq G^2,
\]
and so \(G=0\).  
\end{proof}
\begin{step}\label{Gamma-2}
We have \(E=q\) and \(N_A = 1\).
\end{step}
\begin{proof}\let\qed\relax
Since \(G=0\) by \cref{Gamma-1}, \eqref{Chinstrap} and \eqref{Gentoo} imply that \(E=q\) and \(N_A = 1\).
\end{proof}
\begin{step}\label{Gamma-3}
We have \(N_C = (q-3)/2\).
\end{step}
\begin{proof}\let\qed\relax
Since \(N_A=1\) by \cref{Gamma-2}, \eqref{Adelie} gives us that \(N_C = (q-3)/2\).
\end{proof}
\begin{step}\label{Gamma-4}
The quantity \(F\) is odd and \(H = 2 I\) for some \(I \in \Z \setminus \{0\}\). 
\end{step}
\begin{proof}\let\qed\relax
We know that \(G=0\) by \cref{Gamma-1} and \(E=q\) by \cref{Gamma-2}.
Furthermore, \(q\) is odd since \(p\) is odd by \cref{Alpha-1}.
\cref{Alpha-2} tells us that \(E\equiv F\pmod{2}\) and \(G\equiv H\pmod{2}\), so \(F\) is odd and \(H\) is even.
But \(H \neq 0\), else \(C=D\) (see \cref{Alpha-2}), so \(H = 2 I\) for some \(I \in \Z \setminus\{0\}\).
\end{proof}
\begin{step}\label{Delta}
We must have \(q \leq 5\).
\end{step}
\begin{proof}\let\qed\relax
We can substitute the results from Steps \ref{Gamma-1}--\ref{Gamma-4} into \eqref{Emperor} and \eqref{Humboldt} to obtain
\begin{align}
\label{Royal} 3 F^2 & = \frac{q}{p} \cdot (4 V^{[1,1]}_1 - q) \\
\label{Rockhopper} F^2 + 2(q-3) I^2 & = \frac{q^2}{p}.
\end{align}
Since \(p \equiv 1 \pmod{4}\) by \cref{Alpha-1}, we have \(\gcd(q/p, 3) = \gcd(q/p, 2(q-3)) = 1\), and therefore since $V^{[1,1]}_1 \in \Z$, \eqref{Royal} and \eqref{Rockhopper} consecutively give us that \((q/p) \mid F^2\) and \((q/p) \mid I^2\). 
Now we know from \cref{Gamma-4} that \(F, I \neq 0\), so \(F^2/(q/p), I^2/(q/p) \geq 1\). 
If we substitute this into \eqref{Rockhopper}, the equality becomes the inequality 
\[
q = \frac{q^2/p}{q/p} = \frac{F^2}{q/p} + 2(q-3) \cdot \frac{I^2}{q/p} \geq 1 + 2(q-3) = 2q - 5,
\]
so that \(q \leq 5\).
\end{proof}
\begin{step}
We conclude that \(q = 5\) and \(s \equiv 3 \pmod{4}\). 
\end{step}
\begin{proof}
Steps \ref{Alpha-1} and \ref{Delta} give us that an action with two disjoint transpositions can only occur  when \(q = p = 5\). 
We now consider the possible values for \(s\). 
Since \(\cW_{K, s'} = \cW_{K, s''}\) if \(s' \equiv s'' \pmod{q-1}\) (see the definition of equivalent exponents in \cref{INTRODUCTION}), it suffices to consider the cases when \(s \equiv 0, 1, 2, 3 \pmod{4}\). 
We cannot have \(s \equiv 0, 2 \pmod{4}\), for then \(\gcd(s, q-1) = \gcd(s,4) \neq 1\), so \(s\) would not be invertible.
Nor can we have  \(s \equiv 1 \pmod{4}\), for then \(s\) would be degenerate and this would make $|\ws|\leq 2$ by \cref{Tor}.
Thus \(s \equiv 3 \pmod{4}\). 
\end{proof}

\begin{proposition}[{\bf No action as a transposition}]\label{Barnabas}
If \(|\ws| = 4\), then \(\tau\) does not permute the elements of \(\ws\) as a transposition.
\end{proposition}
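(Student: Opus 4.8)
The plan is to follow the template of \cref{Enoch}, adjusted for the fact that here $\tau$ fixes two of the four values rather than arranging all four into conjugate pairs. Suppose for contradiction that $|\ws|=4$ and $\tau$ acts as a transposition. Since $|\ws|=4>2$, the exponent $s$ is nondegenerate by \cref{Tor}, and since $\tau$ has order $2$, \cref{Simeon} forces $p\equiv 1\pmod 4$ (so $p\geq 5$) and, by \eqref{Rachel}, $\tau(W_u)=W_{-u}$ for all $u\in\Ku$. Write $\ws=\{A,B,C,D\}$ with $\tau=(A\,B)(C)(D)$. Then $A,B$ form a pair of Galois conjugates in $\Q(\sqrt p)$, so by \cref{Benjamin} we may write $A=(E+F\sqrt p)/2$ and $B=(E-F\sqrt p)/2$ with $E,F\in\Z$, $E\equiv F\pmod 2$, and $F\neq 0$; while $C,D\in\Z$ are distinct, with $p\mid C$ and $p\mid D$ whenever they are nonzero (by \cref{Issachar}). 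By \cref{Judah} we have $N_A=N_B=:a$, and since $C$ and $D$ lie in $1$-cycles of $\tau$, both $N_C$ and $N_D$ are even, hence at least $2$.

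Next I would extract the moment identities. Setting $\Omega_u=W_u+W_{-u}$ and $\Phi_u=W_u-W_{-u}$ as in \cref{PHILIPPE}, the values of $\Omega$ are $E$ (with multiplicity $2a$), $2C$ (multiplicity $N_C$), and $2D$ (multiplicity $N_D$), while $\Phi$ takes the values $\pm F\sqrt p$ (each with multiplicity $a$) and $0$. Parts \ref{alpaca}--\ref{caribou} of \cref{Chlorine} (with $k=2$) then give
\[
2a+N_C+N_D=q-1,\quad aE+N_C C+N_D D=q,\quad aE^2+2N_C C^2+2N_D D^2=q^2,
\]
and \cref{Calcium}\ref{gnu} gives $aF^2p=q^2$. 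The last identity is the crucial one: since $q=p^n$, it forces $F=\pm p^j$ and $a=p^{2n-1-2j}$ for some $0\leq j\leq n-1$, so $a=N_A=N_B$ is a power of $p$ that is at least $p$. In particular, when $n=1$ we must have $j=0$ and $a=p$, whence $2a=2p>q-1=p-1$ contradicts the count $2a+N_C+N_D=q-1$; so $n\geq 2$.

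The delicate point is that these four relations, together with the archimedean bounds $|A|,|B|,|C|,|D|<q$ (\cref{Asher}) and the $p$-adic bounds of \cref{Zebulun,Benjamin}, are \emph{not} by themselves enough: they admit spurious integer solutions (for instance, when $q=25$ one finds $a=5$, $F=\pm 5$, $E=-5$, $C=0$, $D=5$, $N_C=4$, $N_D=10$ satisfying all of them). The extra leverage I would use is the third-moment data, namely $\sum_{u}W_u^3=q^2 V^{[1,1]}_1$ and $\sum_{u}W_u^2 W_{-u}=q^2 V^{[1,1]}_{-1}$ (from \cref{Silicon}\ref{Lemuel}), combined with the nonnegativity $V^{[1,1]}_1,V^{[1,1]}_{-1}\geq 0$ furnished by \cref{Phosphorus} (these count solutions of systems of equations). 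Evaluating the left-hand sides on the value distribution and using $aF^2p=q^2$ turns these into
\[
aE^3+4N_C C^3+4N_D D^3\geq -3Eq^2\quad\text{and}\quad aE^3+4N_C C^3+4N_D D^3\geq Eq^2,
\]
which pin down the sign and size of $E$ relative to $C$ and $D$; the spurious solution above is eliminated precisely because it forces $V^{[1,1]}_1=-2<0$.

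Finally I would combine these sign constraints with a $p$-adic valuation analysis of the second-moment relations (isolating the conjugate class via $\sum_{u}(\Omega_u-2C)(\Omega_u-2D)=2a(E-2C)(E-2D)$, whose right side factors through the known power $a=p^{2n-1-2j}$) and with the integrality of $N_C$, $N_D$, and of $V^{[1,1]}_{\pm 1}$, to rule out every remaining configuration. I expect the main obstacle to be exactly this endgame: unlike in \cref{Enoch}, where the two conjugate pairs produce a rigid two-valued symmetrization and an essentially unique solution, here the two $\tau$-fixed rational values leave one degree of freedom after the first three moments, so the contradiction must be forced by playing the nonnegativity of the cubic moments against the valuation and magnitude bounds, with a separate treatment of the sub-case $0\in\ws$ (when one of $C,D$ vanishes).
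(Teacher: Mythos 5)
Your setup and everything through the cubic moments is correct and tracks the first half of the paper's own proof of \cref{Barnabas} (with the roles of $\{A,B\}$ and $\{C,D\}$ swapped relative to the paper's labeling, where the rational values are $A,B$ and the conjugate pair is $C,D$): nondegeneracy via \cref{Tor}; $p\equiv 1\pmod 4$ and $\tau(W_u)=W_{-u}$ via \cref{Simeon} and \eqref{Rachel}; the parity and frequency facts from \cref{Judah} and \cref{Benjamin}; the moment identities from \cref{Chlorine} and \cref{Calcium}; and the identity $q^2 = (\text{frequency})\cdot F^2 p$ (the paper's \eqref{Illadopsis}), forcing the conjugate pair's frequency to be a power of $p$. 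Your cubic inequalities obtained from \cref{Silicon}\ref{Lemuel} and the nonnegativity in \cref{Phosphorus} are the inequality shadow of what the paper uses as exact identities in \eqref{Hawk} and \eqref{Junglefowl}, and your $q=25$ example is a correct and instructive demonstration that the quadratic data alone admits spurious solutions. (Incidentally, the paper shows in Steps \ref{E}--\ref{F} that one of the two $\tau$-fixed values \emph{must} be $0$, so your ``sub-case $0\in\ws$'' is in fact the only case.)

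The genuine gap is the endgame, which you explicitly leave unexecuted, and the toolkit you propose for it --- cubic nonnegativity played against valuations, magnitude bounds, and integrality of frequencies and of $V^{[1,1]}_{\pm 1}$ --- is strictly weaker than what the paper needs. After all moments through degree $3$ are exploited, one is still left with a multi-parameter $p$-adic family: in the paper's notation (\cref{H}), writing $B=\beta p^{n-m+2\ell}$, $E=-\epsilon p^{n-m+\ell}$, $N_B=2\nu p^{m-2\ell}$, the constraints reduce to the Diophantine system \eqref{Bluebell}--\eqref{Crocus}, which nothing in your list rules out when $\beta>1$. Two ingredients absent from your sketch close the argument. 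First, \emph{pointwise} group-algebra information: since $W\conj{W}=q^2[1]$ (\cref{Oxygen}) makes $W$ a unit in $\gr$, eliminating the level sets from \eqref{Exodus}--\eqref{Genesis} yields $VT=qT-2(p^{m-2\ell}-1)U$ and hence $U_u\in\{-1,0\}$ for $u\notin\{\pm 1\}$ (\cref{M}), while the integrality of the coefficients of $\conj{W}S_B$ in \eqref{Joshua} forces $\beta\mid U_u$ and then, via $\sum_u U_u=0$, that $\beta=1$ (\cref{K}). Second, the \emph{fourth} power moment $\sum_u \Phi_u^4$ of \cref{Calcium}\ref{ibex}, i.e.\ \eqref{Kea}, which only becomes usable once $U_u\in\{-1,0\}$ is known pointwise, and which then produces the valuation identity forcing $\ell\mid m$ against $3\le 3\ell<m<4\ell$ (\cref{L}) --- the final contradiction. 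Moment identities of degree at most $3$ see only averaged information about the coefficients of $U$ and $V$ and cannot deliver these pointwise and divisibility facts, so your plan as stated would stall exactly where you predicted it might.
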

Suppose that \(|\ws| = 4\).
Assume that \(\tau\) permutes \(\ws\) as a transposition to show a contradiction.
For clarity, the proof of this proposition is broken into steps. 
\setcounter{step}{0}
\renewcommand\theHstep{Barnabas.\arabic{step}}
\begin{step}\label{A}
We have \(p \equiv 1 \pmod{4}\), so \(p \geq 5\), and there exist \(A, B, E, F \in \Z\) with \(|A| \leq |B|\), \(F > 0\), and \(E \equiv F \pmod{2}\) such that \(\ws = \{A,B, C = (E+F \sqrt{p})/2, D = (E - F \sqrt{p})/2\}\) and $\tau$ acts on $\ws$ as $(A)(B)(C D)$, i.e., $\tau(A)=A$, $\tau(B)=B$, $\tau(C)=D$, and $\tau(D)=C$.
Moreover, both $N_A$ and $N_B$ are even and \(N_C = N_D\).
\end{step}

\begin{proof}\let\qed\relax
Since \(\tau\) has order \(2\), we know by \cref{Simeon} that \(p \equiv 1 \pmod{4}\) and that \(\Q(\ws) = \Q(\sqrt{p})\), the unique degree 2 extension of \(\Q\) that lies in \(\Qz\). 
This means that the two elements of \(\ws\) exchanged by \(\tau\) are Galois conjugate algebraic integers in \(\Q(\sqrt{p})\), and hence can be written as \(C = (E+F \sqrt{p})/2\) and \(D = (E - F \sqrt{p})/2\) for some \(E,F \in \Z\) where \(E \equiv F \pmod{2}\) and \(F > 0\) by \cref{Benjamin}.
The other two elements of \(\ws\) are algebraic integers fixed by \(\tau\) (and hence by \(\sigma\)), so they must be rational integers; we label these $A$ and $B$ in such a way that $|A| \leq |B|$. 
Then both $N_A$ and $N_B$ are even and \(N_C = N_D\) by \cref{Judah}.
\end{proof}

\begin{step}\label{B}
We have \(\tau(W_u) = W_{-u}\) for all \(u \in \Ku\), so as in Sections \ref{TOR} (with \(k=2\)) and \ref{PHILIPPE} we can let
\begin{align*}
& \Omega = \sum_{u \in \Ku} (W_u + W_{-u}) [u] = \sum_{u \in \Ku} (W_u + \tau(W_u)) [u], \\
& \Phi = \sum_{u \in \Ku} (W_u - W_{-u}) [u] = \sum_{u \in \Ku} (W_u - \tau(W_u)) [u],
\end{align*} 
\[
V = V^{[1,1]}, \qquad U = V^{[1,-1]},  \qquad \text{ and } \qquad T = [1] + [-1].
\]
\end{step}

\begin{proof}\let\qed\relax
Since \cref{A} implies that \(\tau\) has order \(2\) and \(p \equiv 1 \pmod{2}\), \cref{Simeon} and \eqref{Rachel} give us that \(\tau(W_u) = W_{-u}\) for all \(u \in \Ku\) and we have the bilateral symmetry alluded to in Sections \ref{TOR} (with \(k=2\)) and \ref{PHILIPPE}.
\end{proof}

\begin{step}\label{C}
The integer \(E\) is odd and there exist rational integers \(X < Y < Z\) such that \(\{X,Y,Z\} = \{2 A, 2 B, E\}\). 
Let \(M_R = |\{u \in \Ku : \Omega_u = R\}|\) for \(R \in \{X, Y, Z\}\).
Then we have \(\{(X, M_X), (Y, M_Y), (Z, M_Z)\} = \{(2 A, N_A), (2 B, N_B), (E, 2 N_C)\}\) and the following equations hold: 
\begin{align}
\label{Avocet} 	q-1 			&= M_X + M_Y + M_Z \\
\label{Bluebird} 	2 q 			&= M_X X + M_Y Y + M_Z Z \\
\label{Crane} 	2 q ^2 		&= M_X X^2 + M_Y Y^2 + M_Z Z^2 \\
\label{Eagle} 	M_X 			&= \frac{2 q^2 - 2 q(Y+Z) + (q-1)Y Z}{(X-Y)(X-Z)} \\
\label{Flamingo} 	M_Y 			&= \frac{2 q^2 - 2 q(X+Z) + (q-1)X Z}{(Y-X)(Y-Z)} \\
\label{Grosbeak} 	M_Z 			&= \frac{2 q^2 - 2 q(X+Y) + (q-1)X Y}{(Z-X)(Z-Y)} \\
\label{Hawk} 	V_1 + 3 V_{-1} 	&= (X+Y+Z) - \frac{X Y + Y Z + Z X}{q} + \frac{(q-1) X Y Z}{2 q^2} \\
\label{Illadopsis} 	q^2 			&= N_C F^2 p \\
\label{Junglefowl} 	V_1 - V_{-1} 	&= E \\
\label{Kea} 		2 F^2 p	 	&= \sum_{u \in \Ku} (V_u + V_{-u} - 2 U_u)^2.
\end{align}
\end{step}
\begin{proof}\let\qed\relax
Since \(N_C = N_D\) by \cref{A}, we observe that as \(u\) runs through \(\Ku\),
\begin{center}
\begin{tabular}{c|c}
\(\Phi_u\) has & \(\Omega_u\) has \\
\hline
\(N_A + N_B\) instances of \(0\) 	& \(N_A\) instances of \(2 A\) \\
\(N_C\) instances of \(F \sqrt{p}\) & \(N_B\) instances of \(2 B\) \\
\(N_C\) instances of \(- F \sqrt{p}\) & \(2 N_C\) instances of \(E\)
\end{tabular}
\end{center}
and $\Omega_u=E$ for those $u$ such that $\Phi_u\not=0$.
Thus, we obtain \eqref{Illadopsis}--\eqref{Kea} from \cref{Calcium}\ref{gnu}--\ref{ibex}.
Note that \eqref{Illadopsis} and \cref{A} imply that \(E\) and \(F\) are odd, whereas \(2 A\) and \(2 B\) must be distinct and even, so that there are rational integers \(X < Y < Z\) with \(\{X,Y,Z\} = \{2 A, 2 B, E\}\) and we can let \(M_X\), \(M_Y\), and \(M_Z\) be as stated above. 
Equations \eqref{Avocet}--\eqref{Crane} then follow from \cref{Chlorine}\ref{alpaca}--\ref{caribou}, which we also use to prove \eqref{Eagle} from the following observation:
\[
M_X (X-Y)(X-Z) = \sum_{u \in \Ku} (\Omega_u - Y)(\Omega_u - Z),
\]
and \eqref{Flamingo} and \eqref{Grosbeak} follow similarly by exchanging the roles of $X$, $Y$, and $Z$.
Similarly, one can prove \eqref{Hawk} using all parts of \cref{Chlorine} (and the fact that $V=V^{[1,1]}$) from the following observation:
\[
0 = \sum_{u \in \Ku} (\Omega_u - X)(\Omega_u - Y)(\Omega_u - Z).
\]
\end{proof}

\begin{step}\label{D}
We have \(-q < -2(q-1)/(p-1) < X < Y < Z < 2 q\) and \(v_p(X), v_p(Y), v_p(Z) \geq 1\).  If any of \(X\), \(Y\), or \(Z\) is nonzero, then its \(p\)-adic valuation is less than the \(p\)-adic valuation of \(q\).
If none of \(X\), \(Y\), and \(Z\) is zero, then \(v_p(XY), v_p(YZ), v_p(ZX) > v_p(q)\).
\end{step}
\begin{proof} \let\qed\relax
The first chain of inequalities follows from \cref{C} and \cref{Benjamin} (which applies due to \cref{A} and \cref{Tor}), once we notice that \(2 A\), \(2 B\), and \(E\) take the place of \(I\) in \cref{Benjamin}.
\cref{Benjamin} also tells us that $v_p(X), v_p(Y), v_p(Z) \geq 1$.
Next, \(M_X X^2\), \(M_Y Y^2\), and \(M_Z Z^2\) are all even rational integers by \cref{C}, so if \(X \neq 0\) but \(v_p(X) \geq v_p(q)\), then \(2 q^2 \mid M_X X^2\), and hence \(M_X X^2 = 2 q^2\) and \(Y = Z = 0\) by \eqref{Crane}.
This contradicts \cref{C}.
Analogous arguments show that the same result holds for \(Y\) and \(Z\).
In particular, if \(0 \not\in \{X,Y,Z\}\), then \(v_p(X), v_p(Y), v_p(Z) < v_p(q)\).
Thus, if we write \eqref{Hawk} as
\[
2 q^2 (V_1 + 3 V_{-1}) = 2 q^2 (X+Y+Z) - 2 q(X Y + Y Z + Z X) + (q-1)(X Y Z),
\]
then \((q-1) X Y Z\) has a strictly smaller \(p\)-adic valuation than every other term on the right-hand side of the above equation. 
This implies that  
\[
v_p(X) + v_p(Y) + v_p(Z) = v_p(2 q^2 (V_1 + 3 V_{-1})) \geq 2 v_p(q),
\] 
and so the desired inequalities follow from subtracting one of the terms on the left-hand side from both sides. 
\end{proof}

\begin{step}\label{E}
We have \(-q < X < Y = 0 < Z < q\).
\end{step}
\begin{proof} \let\qed\relax
Recall from \cref{C} that $M_X$ is a strictly positive count, so the numerator and denominator in \eqref{Eagle} must have the same sign.
Thus, to prove this step, it suffices to show that \(Y=0\) since \(-q < X < Y < Z\) by  \cref{D}, for then the numerator in \eqref{Eagle}, which is positive, becomes \(2 q (q - Z)\). 

Suppose that \(Y \neq 0\).
By \cref{C}, we know that \(Z > 0\), since otherwise the right-hand side of \eqref{Bluebird} would be negative.
Moreover, the numerator in \eqref{Eagle} is positive, that is, 
\begin{equation}\label{Emu}
2q^2 - 2q(Y+Z) + (q-1)Y Z > 0.
\end{equation}
Thus, using \cref{D} and the fact that \(p \geq 5\) from \cref{A} in \eqref{Emu} gives us
\[
Y Z > \frac{2q (Y+Z-q)}{q-1} > \frac{2q}{q-1} \left(-2\left(\frac{q-1}{5-1}\right) + 1 - q\right) = -3 q > -p q.
\]
We cannot have \(Y<0\), for that would imply both \(0 \not\in \{X,Y,Z\}\) and \(v_p(Y Z) \leq v_p(q)\), which contradicts \cref{D}.  So we must have \(Y>0\).
If we use the same argument, replacing \eqref{Eagle} with \eqref{Grosbeak}, \(Z\) with \(Y\), and \(Y\) with \(X\), we show that \(X <0\) is also impossible, and so obtain \(X \geq 0\). 

If \(X > 0\), then \cref{D} implies that \(X, Y, Z \geq p\), so \eqref{Bluebird}, \eqref{Avocet}, and the fact that \(q \geq p \geq 5\) by \cref{A} give us the contradiction
\begin{align*}
2 q \geq p (M_X + M_Y + M_Z) = p(q-1) \geq 5 q - p \geq  4 q.
\end{align*}
This forces \(X = 0 < Y < Z\) by \cref{C}, so that \eqref{Bluebird} and \eqref{Crane} give 
\[
2 q Z = (M_Y Y + M_Z Z) Z > M_Y Y^2 + M_Z Z^2 = 2 q^2,
\]
and hence \(Z > q\). 
On the other hand, \(M_Z Z^2 \leq 2q^2\) by \eqref{Crane}, so \(M_Z = 1\).
With this information, \eqref{Bluebird} and \eqref{Crane} become 
\begin{align}
\label{Bullfinch} M_Y Y + Z & = 2 q \\
\label{Cormorant} M_Y Y^2 + Z^2 & = 2 q^2.  
\end{align}
Since \(v_p(Z) < v_p(q)\) by \cref{D}, \eqref{Bullfinch} and \eqref{Cormorant} imply that \(v_p(M_Y Y) = v_p(Z)\) and \(v_p(M_Y Y^2) = v_p(Z^2)\), and hence that \(v_p(M_Y) = 0\) and \(v_p(Y) = v_p(Z)\). 
Moreover, \eqref{Flamingo} can be rewritten as 
\[
M_Y Y = \frac{2 q (Z - q)}{Z-Y},
\] 
so that \(v_p(Y) = v_p(q) + v_p(Z) - v_p(Z-Y)\), and so \(v_p(Z-Y) = v_p(q)\). 
In other words, \(q \mid Z-Y\).
Since \(0 < Y < Z < 2 q\) by \cref{D}, this is only possible if \(Z = Y + q\). 
If we substitute this equation for \(Z\) into \eqref{Bullfinch} and \eqref{Cormorant} and solve for \(Y\), we obtain \(3 Y = q\), which is impossible because \(p \equiv 1 \pmod{4}\) by \cref{A}. 
\end{proof}

\begin{step}\label{F}
We have \(E = X < 0\) and \(A = Y/2 = 0\) and \(B = Z/2 > 0\).
Moreover, \(|E| < |B|\) and \(V_{-1} > 0\) and \(B = 2 V_{-1}/(1-E/q)\).
\end{step}
\begin{proof} \let\qed\relax
Recall from \cref{C} that $\{X,Y,Z\}=\{2 A, 2 B, E\}$ is a set of three distinct numbers and that $E$ is odd.
Since \(|A| \leq |B|\) by \cref{A} and \(Y=0\) is even by \cref{E}, we  must have \(0 = Y = A\) and \(\{X,Z\} = \{2 B, E\}\).
We obtain \(B = 2 V_{-1}/(1-E/q)\) by substituting these facts into \eqref{Hawk} and using \eqref{Junglefowl} (note that we can divide by \(1 - E/q\) since \cref{E} implies that \(|E| < q\)).

Now, since \(B\) is nonzero, \(1-E/q\) is positive (since \(|E|< q\)), and \(V_{-1}\) is nonnegative (by \cref{Potassium}\ref{kangaroo}), we must have \(B = Z/2\) is positive, and hence \(V_{-1} > 0\) and \(E = X < 0\). 
It then follows that \(1 < 1 - E/q < 2\) and \(V_{-1} < B < 2 V_{-1}\).
Lastly, \cref{Potassium}\ref{kangaroo} tells us that \(V_1 \geq 0\), so \(E \geq -V_{-1}\) by \eqref{Junglefowl}, and thus \(|E| < |B|\).
\end{proof}

\begin{step} \label{G}
There exists an odd integer \(m\) with \(0 < m < n\) such that \(N_C = p^m\) and \(F = p^{n-(m+1)/2}\). 
Let \(\ell = v_p(B) - v_p(E)\).
Then \(v_p(N_B) = m-2 \ell\).
Moreover, we have 
\begin{align}
\label{Birch} N_B B + N_C E 				& = q \\
\label{Cedar} 2 N_B B^2 + N_C E^2		 	& = q^2.
\end{align}
\end{step}
\begin{proof} \let\qed\relax
The results about \(m\), \(N_C\), and \(F\) follow from \eqref{Illadopsis} since \(N_C < q\), while \eqref{Birch} and \eqref{Cedar} come from equations \eqref{Bluebird} and \eqref{Crane} and Steps \ref{C} and \ref{F}. 
Lastly, \eqref{Cedar} implies that \(v_p(N_B B^2) =  v_p(N_C E^2)\) since \(2 N_B B^2 > 0\) and \(N_C E^2 > 0\) by \cref{F} and \(p \nmid 2\) by \cref{A}, so \(v_p(N_B) = m - 2 \ell\).
\end{proof}

\begin{step}\label{H}
We have \(\ell > 0\), and there exist \(\beta, \epsilon, \nu \in \Z_+\) all relatively prime to \(p\) such that 
\begin{equation}\label{Sunflower}
B = \beta p^{n-m+2 \ell}, \qquad E = - \epsilon p^{n-m+\ell}, \text{ and}  \qquad N_B = 2 \nu p^{m-2 \ell}.
\end{equation}
Moreover, we have the following equations:
\begin{align}
\label{Bluebell} 	2 \nu \beta - \epsilon p^\ell 	& = 1 \\
\label{Crocus}	4 \nu \beta^2 + \epsilon^2 		& = p^{m-2 \ell}.
\end{align}
\end{step} 

\begin{proof}\let\qed\relax
Steps \ref{A}, \ref{F}, and \ref{G} allow us to write \(B = \beta p^{v_p(E) + \ell}\), \(N_B = 2 \nu p^{m-2 \ell}\), \(E = - \epsilon p^{v_p(E)}\), and \(N_C = p^m\)
with \(\beta, \epsilon, \nu \in \Z_+\) all relatively prime to \(p\), so that \eqref{Birch} and \eqref{Cedar} become 
\begin{align}
\label{Bison} 2 \nu \beta p^{m + v_p(E) - \ell} - \epsilon p^{m + v_p(E)} & = p^n \\
\label{Cow} 4 \nu \beta^2 + \epsilon^2 & = p^{2 n - m - 2 v_p(E)}.
\end{align}
It thus suffices to show that \(\ell > 0\), for then \(m + v_p(E) - \ell < m+v_p(E)\), and hence \(m + v_p(E) - \ell = n\) by \eqref{Bison}, so that \(v_p(E) = n - m + \ell\), and so the expressions for $E$ and $B$ at the beginning of this proof become those in \eqref{Sunflower} while \eqref{Bison} and \eqref{Cow} become \eqref{Bluebell} and \eqref{Crocus}.

Suppose \(\ell \leq 0\), and let \(g = n - m - v_p(E)\). 
Then \eqref{Bison} and \eqref{Cow} become 
\begin{equation}\label{Peony}
2 \nu \beta p^{-\ell} - \epsilon = p^g
\end{equation}
\begin{equation}\label{Petunia}
4 \nu \beta^2 + \epsilon^2 = p^{2 g + m}.
\end{equation}
By \cref{F}, we have \(\epsilon = |E|/p^{v_p(E)} < |B|/p^{v_p(E) + \ell} = \beta\), so \eqref{Peony} gives us  
\begin{equation}\label{Chrysanthemum}
p^g > 2 \nu \beta p^{-\ell}- \beta \geq \beta(2 \nu - 1),
\end{equation}
and hence \(g > 0\) since \(\beta, \nu \geq 1\). 
Note that this implies that \(\ell = 0\), for otherwise the \(p\)-adic valuation of the left-hand side of \eqref{Peony} would be \(0\).  
We can thus solve \eqref{Peony} for \(\epsilon\) and substitute the resulting expression into \eqref{Petunia} to get 
\begin{equation}\label{Poppy}
4 \beta^2 \nu (\nu + 1) - 4 \nu \beta p^{g} + p^{2 g} - p^{2 g + m} = 0.
\end{equation}
Since \(g > 0\), the third and fourth terms on the left-hand side of \eqref{Poppy} have strictly larger \(p\)-adic valuation than the second term does, so we must have \(v_p(4 \beta^2 \nu (\nu+1)) = v_p(4 \nu \beta p^g)\), that is, 
\(v_p(\nu+1) = g\).
So \(\nu = -1 + \mu p^g\) for some \(\mu \geq 1\) such that \(p \nmid \mu\).
But if we substitute this into \(p^g > \beta(2\nu-1)\) from \eqref{Chrysanthemum} and rearrange to obtain an upper bound for $\mu$, then (keeping in mind that $p \geq 5$ by \cref{A}) we obtain
\[
\mu < \frac{1}{2 \beta} + \frac{3}{2 p^g} \leq \frac{1}{2} + \frac{3}{2 \cdot 5} = \frac{4}{5},
\]
which is a contradiction. 
We thus have \(\ell > 0\), as we wished. 
\end{proof}

\begin{step}\label{I}
We have both \(B E - 2 C D = \beta p^{2 n- 2 m + 2 \ell}\) and \(C^2 + D^2 - B E = p^{2 n- 2 m + 2 \ell} (p^{m - 2 \ell} - \beta)\).
\end{step}
\begin{proof} \let\qed\relax
These results come from using the expressions for $C$ and $D$ in \cref{A} and those for \(B\), \(E\), and \(F\) in Steps \ref{G} and \ref{H} to write  
\begin{align*}
B E - 2 C D & = p^{2 n - 2 m + 2 \ell} \left(\frac{p^{m-2 \ell}-\epsilon^2}{2}-\beta\epsilon p^\ell \right) \\
C^2 + D^2 - B E & = p^{2 n - 2 m + 2 \ell} \left(\frac{p^{m-2 \ell}+\epsilon^2}{2} + \beta \epsilon p^{\ell}\right)
\end{align*}
and then using \eqref{Bluebell} and \eqref{Crocus} to simplify these expressions.
\end{proof}

\begin{step} 
Let \(S_R = \{u \in \Ku: W_u = R\}\) for \(R \in \ws\).  If we identify these subsets of $\Ku$ with group algebra elements as described before \cref{Hydrogen}, then, using the definitions of $W=\weil$ from \eqref{celery} in \cref{HERBERT} and of $T$, $U$, $V$ from \cref{B}, we have
\begin{align}
\label{Exodus} W T & = \sum_{u \in \Ku} (W_u + W_{-u}) [u] = 2 B S_B + E (S_C + S_D) \\
\label{Leviticus} W U & = \sum_{u \in \Ku} W_u W_{-u} [u] 	= B^2 S_B + C D (S_C + S_D) \\
\label{Genesis} W V T & = \sum_{u \in \Ku} (W_u^2+W_{-u}^2) [u] = 2 B^2 S_B + (C^2 + D^2)(S_C + S_D).
\end{align}
\end{step}

\begin{proof} \let\qed\relax
The left-hand equalities follow from the definitions of $T$, $U$, and $V$ and also \cref{Aluminum} in the case of \eqref{Leviticus} and \eqref{Genesis}.  The right-hand equalities follow from the fact that $W_{-u}=\tau(W_u)$ (by Step \ref{B}) and the values for $W_u$ in Steps \ref{A} and \ref{F}.
\end{proof}

\begin{step}\label{K} 
We have \(\beta = 1\), so \(B = p^{n-m+2 \ell}\).
\end{step}
\begin{proof} \let\qed\relax
We can eliminate \(S_C + S_D\) from \eqref{Exodus} and \eqref{Leviticus} to get 
\begin{equation}\label{Numbers}
W (E U - C D T) = B(B E -2 C D) S_B. 
\end{equation}
Then we can multiply both sides of \eqref{Numbers} by \(\conj{W}/(B(B E - 2 C D))\) to get, by \cref{Oxygen} and Steps \ref{H} and \ref{I}, that 
\begin{equation}\label{Joshua}
\conj{W} S_B =\frac{q^2 (E U - C D T)}{\beta^2 p^{3 n - 3 m + 4 \ell}}.
\end{equation}
Note that the coefficients of \(\conj{W} S_B \in \gr\) are algebraic integers, while the coefficients of the right-hand side of \eqref{Joshua} are rational numbers, so the coefficients in \eqref{Joshua} must all be rational integers. 
In particular, \(\beta\) divides every coefficient of the numerator of the right-hand side of the above equation.
Since \(\gcd(q, \beta) = 1 = \gcd(\beta,\epsilon) = \gcd(\beta,E)\) by \cref{H} and \eqref{Bluebell}, we must have \(\beta \mid U_u\) for all \(u \not\in \{\pm 1\}\).
If \(\beta > 1\), then \(\beta \nmid -1\), so \(U_u \geq 0\) for every \(u \not= \pm 1\) by \cref{Potassium}\ref{kangaroo}.
We also know that \(U_1 = U_{-1} = V_{-1} > 0\) by \cref{Potassium}\ref{wallaby} and \cref{F}. 
But then \(\sum_{u \in \Ku} U_u > 0\), which contradicts \cref{Potassium}\ref{wombat}.
Thus \(\beta=1\) and \(B = p^{n-m+2 \ell}\) by \cref{H}.
\end{proof}

\begin{step}\label{L}
We have \(3 \leq 3 \ell < m < 4 \ell\) and  
\begin{equation}\label{squirrel}
\epsilon^2 + 2 p^\ell \epsilon -(p^{m-2 \ell}-2)=0.
\end{equation}
\end{step}
\begin{proof}\let\qed\relax
\cref{squirrel} comes from using \cref{K} and eliminating \(\nu\) from \eqref{Bluebell} and \eqref{Crocus}. 
Then \eqref{squirrel} and \cref{H} imply that \(p^{m - 3 \ell} > 2 \epsilon > 1\), so \(m > 3 \ell \geq 3\).

Recall from \cref{H} that $\epsilon > 0$, so \eqref{squirrel} implies $\epsilon = -p^\ell + \sqrt{p^{2 \ell}+p^{m-2 \ell}-2}$.
If we assume that \(m > 4 \ell\), then $\epsilon \geq -p^\ell + \sqrt{p^{2 \ell}+p^{2 \ell + 1}-2}$, and since $\ell > 0$ by \cref{H}, we obtain $\epsilon \geq - p^\ell + \sqrt{p} \cdot p^\ell  > p^\ell$ because \(\sqrt{p} \geq \sqrt{5} > 2\) by \cref{A}. 
But then Steps \ref{H} and \ref{K} give us that \(|E| = \epsilon p^{n-m+\ell} > p^{n-m+2\ell} = |B|\), which contradicts \cref{F}.
So $m \leq 4\ell$, and this inequality is actually strict since $m$ is odd by \cref{G}.
\end{proof}

\begin{step}\label{M}
Let \(\delta_0 = 1\) and \(\delta_x = 0\) if \(x \not= 0\). 
For any \(u \in \Ku\), we have   
\[
V_u + V_{-u} = q \delta_{u^2-1} - 2 (p^{m - 2 \ell} - 1) U_u.
\]
Moreover, if \(u \not\in \{\pm 1\}\), then \(U_u \in \{-1,0\}\).
\end{step}
\begin{proof}\let\qed\relax
First, we eliminate \(S_B\) from \eqref{Exodus} and \eqref{Leviticus} (respectively,  \eqref{Exodus} and \eqref{Genesis}) and use Steps \ref{I} and \ref{K} to get  
\begin{align} 
\label{Flowerpot} W (B T - 2 U) & = p^{2 n - 2 m + 2 \ell} (S_C + S_D) \\
\label{Garden} W (V T - B T) & = (p^{m - 2 \ell} - 1) p^{2 n - 2 m + 2 \ell} (S_C + S_D).
\end{align}
Then, we substitute \eqref{Flowerpot} into \eqref{Garden} and use \cref{K} to obtain 
\[
W \left(-2(p^{m-2 \ell}-1)U - V T +  q T\right) = 0.
\]
Note that \(W\) is a unit in \(\gr\) because \(W \conj{W} = q^2\) by \cref{Oxygen}, so 
\[ V T = q T - 2(p^{m-2 \ell}-1) U, \]
and hence we achieve the above general result. 
When \(u \not\in \{\pm 1\}\), we also have \(U_u \in \{-1, 0\}\) since both \(V_u\) and \(V_{-u}\) are nonnegative and \(U_u \geq -1\) by \cref{Potassium}\ref{kangaroo}, while $-2(p^{m-2\ell}-1)$ is strictly negative by \cref{L}.
\end{proof}

\begin{step}
We conclude that \(\tau\) does not permute \(\ws\) as a transposition.
\end{step}
\begin{proof}
Using the expression for \(F\) in \cref{G} and the expression for \(V_u + V_{-u}\) from \cref{M}, \eqref{Kea} becomes 
\begin{equation}\label{John}
2 p^{2 n - m} = \sum_{u \in \{\pm 1\}} (q^2 - 4 q p^{m - 2 \ell} U_u) + 4 p^{2 m - 4 \ell} \sum_{u \in \Ku} U_u^2.
\end{equation}
We now use the fact from \cref{M} that \(U_u \in \{-1,0\}\) for \(u \notin \{\pm 1\}\) to write $\sum_{u \in \Ku} U_u^2 = \sum_{u \in \{\pm 1\}} U_u (U_u+1) - \sum_{u \in \Ku} U_u$.
Then, since \(\sum_{u \in \Ku} U_u = 0\) and \(U_1 = U_{-1} = V_{-1}\) by \cref{Potassium}\ref{wallaby},\ref{wombat}, we see that \eqref{John} simplifies to   
\begin{equation}\label{Jude}
p^{2 n - m} = q^2 - 4 q p^{m-2 \ell} V_{-1} + 4 p^{2 m - 4 \ell} V_{-1} (V_{-1} + 1).
\end{equation}
We also have from \cref{F} that \(2 V_{-1} = B(1-E/q)\), so \(2 p^{m-2 \ell} V_{-1} = q-E\) by \cref{K}.
Using this fact, the expression for \(E\) in \cref{H}, and \eqref{squirrel} in \eqref{Jude}, we obtain
\begin{align*}
p^{2 n-m}
& = q^2 - 2 q (q-E) + 2 p^{m - 2 \ell} (q-E) + (q-E)^2 \\
& = \epsilon^2 p^{2 n-2 m+2 \ell} + 2 \epsilon p^{n-\ell} + 2 p^{n+m -2 \ell} \\
& = p^{2 n-m} -2 \epsilon p^{2 n - 2 m + 3 \ell} - 2 p^{2 n - 2 m + 2 \ell} + 2 \epsilon p^{n - \ell} + 2 p^{n+m-2 \ell},
\end{align*}
that is, 
\begin{equation}\label{Revelation}
p^{2 n - 2 m + 2 \ell} (1+\epsilon p^\ell)  = p^{n-\ell} (\epsilon+  p^{m- \ell}).
\end{equation}
Since \(\ell > 0\) and \(m > \ell\) by \cref{L} and since $p\nmid \epsilon$ (see \cref{H}), \(p\)-adic valuation shows that \(2 n-2 m+2\ell = n-\ell\).  Then divide \eqref{Revelation} by \(p^{n-\ell} = p^{2 n - 2 m + 2 \ell}\) and rearrange to obtain 
\[
\epsilon (p^\ell-1)  = p^{m-\ell} - 1.
\]
This means that \(p^\ell-1 \mid p^{m-\ell}-1\), which implies that \(\ell \mid m-\ell\), and so \(\ell \mid m\).
But \(3 \ell < m < 4 \ell\) by \cref{L}, so we have a contradiction.
\end{proof}
Now we are ready to prove \cref{MainTheorem} (which was restated at the beginning of this section as \cref{Juvenal}).
\renewcommand*{\proofname}{Proof of \cref{MainTheorem}}
\begin{proof}
Suppose \(|\ws|=4\). 
Since \(\tau\) permutes the elements of \(\ws\) (see \eqref{Rachel}), \(\tau\) must act trivially, as a transposition (while keeping two values fixed), as a composition of two disjoint transpositions, as a \(3\)-cycle (while keeping one value fixed), or as a \(4\)-cycle on \(\ws\).
But Propositions \ref{Josiah}, \ref{Paul}, and \ref{Barnabas} exclude the possibilities that \(\tau\) acts as a \(4\)-cycle, as a \(3\)-cycle, and as a transposition, respectively, while \cref{Enoch} states that \(\tau\) permutes \(\ws\) as a composition of two disjoint transpositions precisely when \(q = 5\) and \(s \equiv 3 \pmod{4}\), in which case \(\ws = \{(5 \pm \sqrt{5})/2, \pm \sqrt{5}\}\).
That is, other than the aforementioned case, \(\tau\) can only act trivially on \(\ws\), and hence \(\ws\) is rational by \cref{Simeon}, as we wished to prove. 
\end{proof}

\section*{Acknowledgements}
The authors thank Jason Lo and an anonymous reviewer for helpful suggestions.

\end{document}